\setlist{nolistsep}
\newcommand{\abs}[1]{\lvert#1\rvert}
\newcommand{\norm}[1]{\lVert #1\rVert}
\renewcommand*{\P}{\mathbb{P}}
\newcommand*{\E}{\mathbb{E}}
\newcommand*{\R}{\mathbb{R}}
\newcommand*{\N}{\mathbb{N}}
\newcommand{\size}{F}
\newcommand{\presize}{F^N}
\newcommand{\Nt}{\lfloor Nt\rfloor}
\newcommand{\pii}{\tilde{\Pi}^{\text{\tiny{($i$)}}}}
\newcommand{\prepii}{\tilde{\Pi}^{\text{\tiny{($i$),$N$}}}}
\newcommand{\succi}{\stackrel{\text{\tiny{($i$)}}}{\succ}}
\newcommand{\simi}{\stackrel{\text{\tiny{($i,j$)}}}{\sim}}
\newcommand{\succsimi}{\stackrel{\text{\tiny{($i$)}}}{\succsim}}
\newcommand{\Succsim}[1]{\stackrel{\text{\tiny{(#1)}}}{\succsim}}
\newcommand{\demes}{D} 
\newcommand{\Ni}{\omega_iN} 
\renewcommand{\i}{\text{\tiny{$(i)$}}}
\renewcommand{\j}{\text{\tiny{$(j)$}}}
\newcommand{\loc}[1]{\text{\tiny{$(#1)$}}}
\renewcommand*{\Xi}{\varXi}
\renewcommand*{\epsilon}{\varepsilon}
\renewcommand*{\Theta}{\varTheta}
\renewcommand*{\Delta}{\varDelta}
\renewcommand{\vec}[1]{\underline{#1}}
\newcommand*{\1}{\mathds{1}}
\newcommand{\efflen}{\alpha}
\theoremstyle{plain}
\theoremstyle{definition}
\newtheorem{definition}{Definition}[section]
\newtheorem{lemma}[definition]{Lemma}
\newtheorem{proposition}[definition]{Proposition}
\theoremstyle{plain}
\newtheorem{theorem}{Theorem}
\theoremstyle{remark}
\newtheorem{remark}[definition]{Remark}
\title{Multi-type $\Xi$-coalescents from structured population models with bottlenecks}
\author{Marta Dai Pra \thanks{Institut f\"ur Mathematik, Humboldt-Universit\"at zu Berlin, Germany, {\tt daimarta@hu-berlin.de}} \and Alison Etheridge \thanks{Department of Statistics, University of Oxford, United Kingdom, {\tt etheridg@stats.ox.ac.uk}} \and Jere Koskela \thanks{School of Mathematics, Statistics and Physics, Newcastle University, and Department of Statistics, University of Warwick, United Kingdom, {\tt jere.koskela@newcastle.ac.uk}}
\and Maite Wilke-Berenguer \thanks{Institut f\"ur Mathematik, Humboldt-Universit\"at zu Berlin, Germany {\tt maite.wilkeberenguer@hu-berlin.de} }}
\date{}
\begin{document}

\maketitle

\begin{abstract}
	We introduce an individual-based model for structured populations undergoing demographic bottlenecks, i.e.\ drastic reductions in population size that last many generations and can have arbitrary shapes. We first show that the (non-Markovian) allele-frequency process converges to a Markovian diffusion process with jumps in a suitable relaxation of the Skorokhod J1 topology. Backward in time we find that genealogies of samples of individuals are described by multi-type $\Xi$-coalescents presenting multiple simultaneous mergers with simultaneous migrations. These coalescents are also moment-duals of the limiting jump diffusions. We then show through a numerical study that our model is flexible and can predict various shapes for the site frequency spectrum, consistent with real data, using a small number of interpretable parameters.
    \medbreak
    \noindent
    \emph{Key words and phrases:} $\Xi$-coalescent, jump diffusion, Wright--Fisher model, bottlenecks, population models, moment duality\\
    \emph{MSC2020:} 60K35; 60J90; 60J70
\end{abstract}

\section{Introduction}
Most mathematical theory and statistical tools in population genetics are based on the Kingman coalescent \cite{Kingman_1982}, which is a random tree describing common ancestry of genetic samples from large populations. Its evolution consists of binary mergers that happen at rate $1$ for each pair of lineages. A key reason for its ubiquity is robustness: the Kingman coalescent arises as a scaling limit from a broad class of individual-based models \cite{Ca74,Mohle98}. On the other hand, one of its key limitations is that the number of offspring per individual has to be small in comparison to the population size. This is often a reasonable assumption provided that the population size remains constantly large. However, many organisms exhibit recurrent strong variation in population sizes \cite{TL_2014} which can occur, for example, due to limited resources or natural catastrophes. Surviving organisms can then repopulate the environment in a relatively short time, due for example to reduced competition, causing many individuals to descend from few ancestors.

Such genealogies are better described by coalescents with multiple mergers, also called $\Lambda$-coalescents \cite{DK99,Sag99,Pit99}, or with simultaneous multiple mergers, known as $\Xi$-coalescents \cite{Schw_2000,MS01}. In addition to a large variation in population size, other effects giving rise to multiple merger coalescents include skewed offspring production, typical of marine species, \cite{Bjarki16}, local extinction-recolonization events in spatially structured populations \cite{TV09}, natural selection \cite{Sch17}, and range expansion \cite{BHK21}.

In this paper, we consider demographic bottlenecks, i.e.\ severe declines in population size that can last many generations, in a spatially-structured population.
In nature, these declines can be caused, for example, by natural disasters such as fires or floods. The reasons for modelling populations undergoing catastrophic events range from inferring human population history \cite{SBB,LD11} to reconstructing the evolutionary history of certain pathogens \cite{MGF21}. Earlier works on population bottlenecks such as \cite{BBMST_2008, GMS_2022} show that genealogies from individual-based models presenting drastic fluctuation in population size can converge to multiple merger coalescents in the infinite-population limit. 

Our aim is to add a geographical structure to this picture. Real examples of bottlenecks affecting structured populations can be found in marine species: one example is the Atlantic cod, which is widespread in various separated areas of the Icelandic sea between which migration is possible \cite{OWEP14}. 
In this scenario, we can think of a bottleneck as a fishing period in a specific area. Another possible application may be the spreading of certain pathogens between hosts where bottlenecks arise due to the effect of the host immune system or of antibiotic treatments \cite{MGF21}.

We build an individual-based model of a population divided into a finite number of islands affected by recurrent bottlenecks. We assume that bottlenecks affect one island at a time. 
Furthermore, we also generalise the common assumption that population size during a bottleneck is constant \cite{BBMST_2008,GMS_2022}. 
Instead, our analysis allows an essentially arbitrary population size trajectory.

We are going to consider scaling limits in which the total population size, $N$, goes to infinity.
Following the idea of \cite{GMS_2022}, we consider bottlenecks of two regimes: we say that a bottleneck is \emph{drastic} if, for its entire duration, the size of the affected island remains of order one and is independent of $N$; we say that a bottleneck is \emph{soft} if the size of the affected island grows to infinity, but is $o(N)$.

For both models, we investigate how the presence of bottlenecks affects the allele-frequency process and show that it converges to a diffusion process with jumps.

We then study the behaviour of the ancestral lines backward in time. 
We prove that the limiting genealogy is a multi-type $\Xi$-coalescent presenting multiple simultaneous mergers and simultaneous migrations (see Figure \ref{fig:coalescent}). Multi-type coalescents have been of great interest in recent times \cite{Eld09,JKR23,Mohle23}, and, in our setting, types stand for geographical locations. The peculiarity of our model is that it leads to a coalescent featuring multiple mergers and changes of types (migrations) simultaneously. 
This behaviour was already present in previous models \cite{JKR23}, but our novelty is that we obtain it as the scaling limit of a concrete individual-based model. Simultaneous migrations and coalescences arise from the fact that, during a bottleneck, we still have migrants leaving from and coming into the affected island.

\begin{figure}
\centering
\includegraphics[height=6cm]{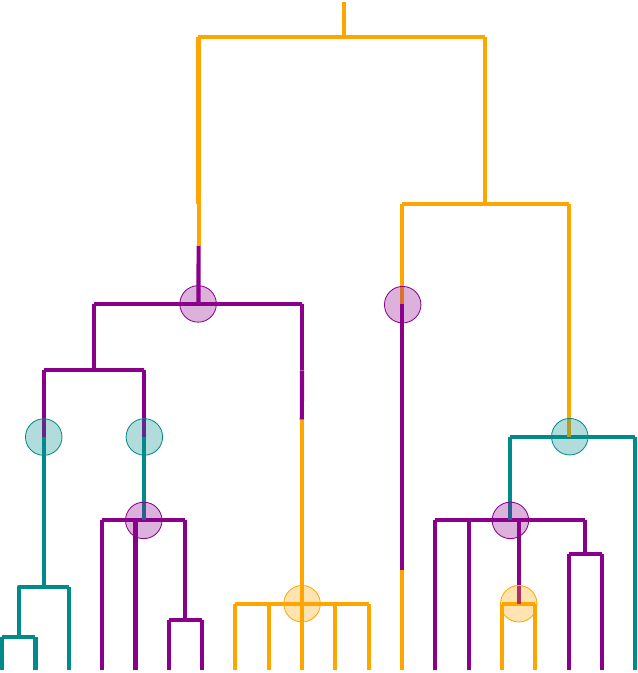}
\caption{An example of a coalescent tree obtained as a limiting genealogy of our model. Different colours represent different demes, in this case they are $D=3$. We note that we can have binary mergers and single migrations as well as simultaneous events (multiple simultaneous mergers and simultaneous migrations). The latter, caused by bottlenecks, are highlighted with circles. Circles on the same level show all the events happening during a particular bottleneck and their colour corresponds to the affected deme.} \label{fig:coalescent}
\end{figure}

\subsection{The model}\label{sec:model}

We study a \emph{structured} Wright--Fisher model with \emph{bottlenecks}. More precisely, we consider a haploid population evolving in discrete generations $g\in\N_0$, where $\N_0=\{0,1,2,\ldots \}$. We assume the population is divided into $\demes \in \N$ \emph{demes} with initial population sizes \mbox {$N_i=\Ni\in \N$}, with $\omega_i > 0$ and $\sum_{i=1}^D \omega_i=1$, such that the total population size is equal to $N=\sum_{i=1}^\demes \Ni$. One can think of demes as separate \emph{islands}. 

The population is affected by recurring bottlenecks, i.e.\ exogenous events that substantially reduce the population size for one or several generations. These events affect exactly one island at a time and, at the end of a bottleneck, the population size on the affected island is restored to its usual value; each bottleneck can have a different deterministic shape (see Figure \ref{fig:pop-sizes}). 

Fix a probability space $(\Omega,\mathcal{F},\P)$. We enumerate consecutive bottlenecks with an index $k \in \N$ and consider the following sequences of independent random variables to describe their properties:
\begin{itemize}
\item  $\{l_k^N\}_{k\in \N}$ are the lengths of the bottlenecks;
\item $\{s_k^N\}_{k\in \N}$ are the times between two bottlenecks for $k\geq 2$, i.e.\ $s_k^N$ is the number of generations from the end of the $(k-1)$-th bottleneck to the beginning of the $k$-th bottleneck, while $s_1^N$ is the beginning of the first bottleneck;
\item $\{\beta_k\}_{k\in \N}$ are i.i.d.\ uniform random variables on $\{1,\ldots,\demes\}$ determining the bottlenecks' locations.
\end{itemize}
When the \mbox{$k$-th} bottleneck occurs, if $\beta_k=i$, it affects the $i$-th subpopulation and it lasts for $l_k^N$ generations. The size of the affected deme during the \mbox{$k$-th} bottleneck is determined by a function $\size_k^N:\N_0\rightarrow \N$. The random sequence of the $i$-th subpopulation size is given by
\begin{equation}\label{eq:pop-size}
 R^N_{i}(g) =   \left\{ \begin{array}{ll}
 \size_k^N(g-\underline{t}^N_{k}-1),  & \,\,\, \text{if} \,\,\underline{t}_k^N< g \leq \underline{t}_k^N+l^N_k\,\,	\text{and} \,\, \beta_k=i,\\
\Ni, & \,\,\, \text{otherwise}, 
 \end{array}
 \right.			
 \end{equation}
where $\underline{t}_k^N:=\sum_{m=1}^{k-1}(s_m^N+l_m^N)+s_{k}^N$ is the random generation before the beginning of the $k$-th bottleneck. We also denote by $\overline{t}^N_k=\sum_{m=1}^{k}(s_m^N+l_m^N)$ the last generation of the $k$-th bottleneck.
For now, we do not make any further assumption on the distributions of these random variables, as they will be specified in the following sections.

The evolution of the population at each generation $g \in \N_0$ consists of two steps:
 
\begin{enumerate}
\item \emph{Reproduction:} Reproduction takes place within each deme and consists of a Wright--Fisher update, i.e.\ each individual chooses one parent uniformly from the previous generation and independently from the other individuals in its generation.
\item \emph{Migration:}	A fixed number $c_{ij}$ of individuals, chosen uniformly without replacement, migrates from island $i$ to island $j$. We assume
\begin{equation*}
c_i:=\sum_{j\neq i}c_{ij}=\sum_{j\neq i}c_{ji},
\end{equation*}
so that the size of each subpopulation is maintained under migration; $c_i$ is the total number of individuals leaving and coming into deme $i$, where we assume that for every $k \in \N$
\begin{equation*}
\max_i\{c_i\}\leq \size^N_k(g)\leq \min_i\{N_i\},
\end{equation*}
for every $g\in \N_0$. We are assuming the number of migrants per generation is $O(1)$, and independent of whether a bottleneck is taking place. This is a strong assumption for drastic bottlenecks, where the finite population size on a deme during a bottleneck is sensitive to exact migrant numbers. For soft bottlenecks, small changes in migration patterns will not be relevant for the large-population scaling limit. However, we would like to mention that the model can be extended to the case where the number of individuals leaving a deme decreases during a bottleneck.
\end{enumerate}

\begin{figure}
\centering
\includegraphics[width=.4\textwidth,height=.7\textwidth]{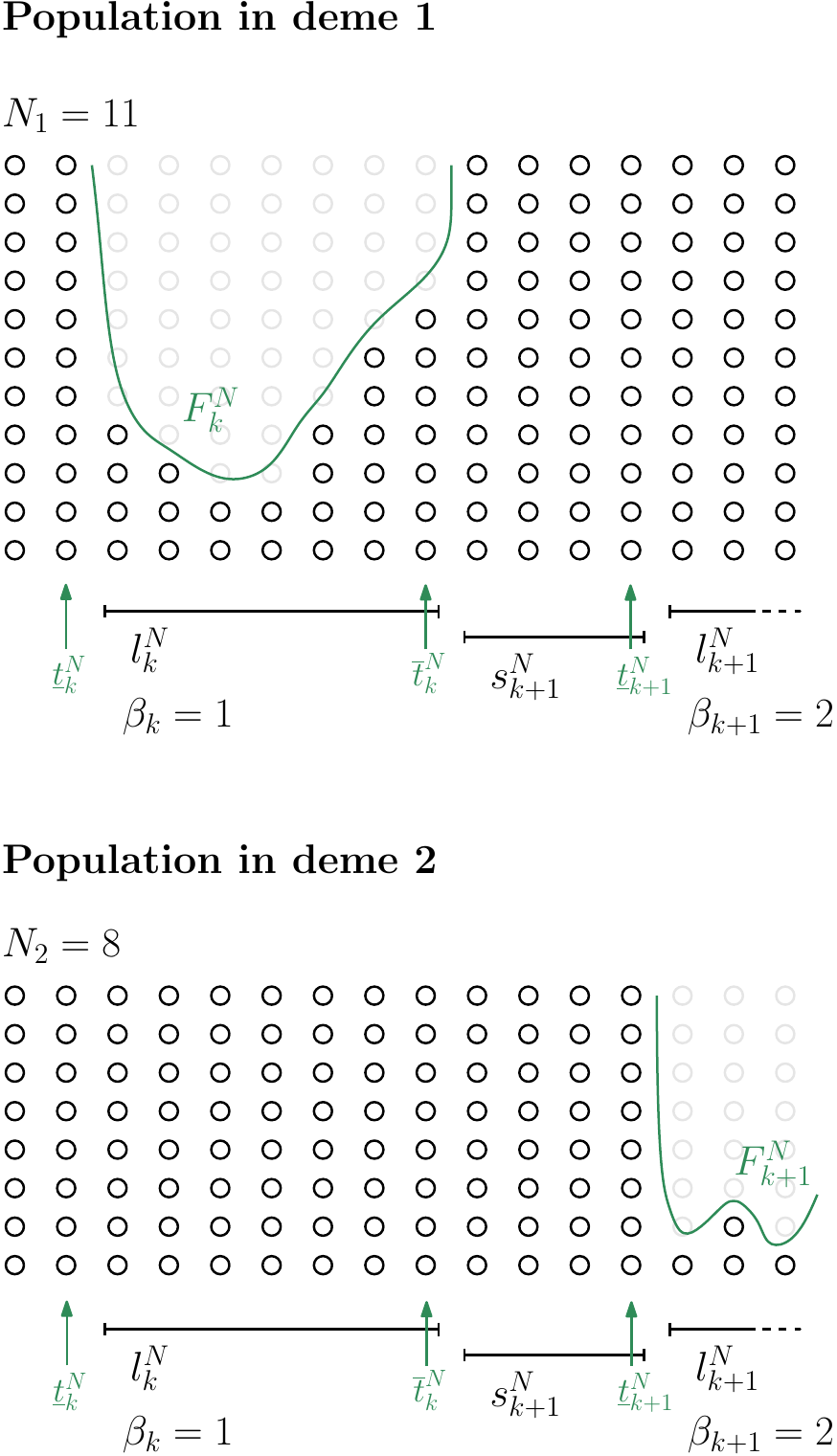}
\caption{Example of the effect of bottlenecks on the population size in the case D=2. The circles represent individuals, and their number is constant in between bottlenecks and decreases in a deme when a bottleneck happens there. See the main text for a full explanation of the notation.} \label{fig:pop-sizes}
\end{figure}

There are two types of processes we study from these population models, one forward in time and one backward in time. Forward in time, we assume that each individual carries an allele, from a bi-allelic space $E=\{a,A\}$, which it passes on to its offspring, and we study the allele-frequency process. Backward in time, we study the coalescent process that describes the genealogy of a sample from  the population.

In Section \ref{section:drastic}, we examine \emph{drastic} bottlenecks, i.e.\ catastrophes during which the size of the affected island remains bounded as $N \to \infty$. We will observe that the length of the bottlenecks must remain bounded in order to get a non-trivial limit, and we prove that the corresponding $D$-dimensional allele-frequency process converges to a structured Wright--Fisher diffusion with jumps. Indeed if the length of a bottleneck would grow to infinity, the frequency of one allele on the affected island would a.s. be in $\{0,1\}$ by the end. We then find a dual process of this limit, which is the block-counting process of a multi-type $\Xi$-coalescent presenting multiple mergers and migrations simultaneously.

In Section \ref{section:soft}, we consider \emph{soft} bottlenecks during which the population size grows with $N$, but at a lower order. More precisely, we take bottleneck lengths and population sizes to satisfy, for every $k \in \N$,
\begin{equation*}
\lim_{N \to \infty} l^N_k= \infty, \quad \lim_{N \to \infty} \frac{l^N_k}{N}=0,	
\quad \lim_{N \to \infty}\frac{\presize_k(\lfloor l_k^Nt\rfloor)}{l_k^N}=\varphi(t) \hspace{2em}\text{in probability, } \forall t\in[0,1].
\end{equation*}
We first study the backward-in-time evolution and prove that the genealogy converges to a (different) multi-type $\Xi$-coalescent presenting multiple mergers and migrations simultaneously. We then prove a convergence result for the forward-in-time allele-frequency processes.
We call the models in Section \ref{section:drastic} (resp.\ \ref{section:soft}) structured Wright--Fisher model with drastic (resp.\ soft) bottlenecks.

Finally, in Section \ref{section:simulations}, we show some simulations of the Site Frequency Spectrum (SFS) of the previous models. We see the flexibility of our model that which predict very different SFS shapes using a small number of interpretable parameters.
\begin{remark}
When $N$ is fixed, the population size on the deme affected by a bottleneck can decline from and grow back to its regular size gradually over a number of generations. As $N \to \infty$, these phases of decline and regrowth necessarily become sharper as the regular population size grows. However, they can still take place over a number of generations. A drastic bottleneck lasts only finitely many generations and hence the change in population size will necessarily resemble a Heaviside function on the $N$-generation diffusive timescale as $N \to \infty$. Soft bottlenecks allow for less extreme variation in population size on a longer timescale, for example a bottleneck of duration $l^N \gg \log(N)$ could reduce the population to a constant level $l^N$ for $l^N-\log(N)$ generations and, in the last $\log(N)$ generations, the population grows back exponentially. This example could be related to the example of fishing periods if we imagine that, as soon as fishing ends, the reduced population grows back exponentially fast.
\end{remark}

\section{Drastic bottlenecks}\label{section:drastic}
In this section, we study the impact of \emph{drastic} bottlenecks on infinite-population scaling limits. At first, we investigate the allele-frequency process and prove its convergence, after a suitable time rescaling, to a diffusion process with jumps. In the second part of the section, we investigate the behaviour of the genealogy backward in time. We prove that the limiting frequency process is the moment dual of the block-counting process of a coalescent presenting multiple simultaneous mergers and migrations. 
 
We retain the notation of the previous section. Let $\gamma>0$. We make the following assumptions:
\begin{enumerate}[label=\textbf{D.\arabic*}]
\item \label{enumerate:1} Let $L$ be a probability measure on $\N$ and $\{l_k^N\}_{k\in \N}$ a sequence of i.i.d.\ random variables converging in distribution to $L$ as $N$ tends to infinity.
\item \label{enumerate:2} Let $\{s_k^N\}_{k\in \N}$ be a sequence of i.i.d.\ random variables following a geometric distribution with parameter $\gamma/N$.
\item \label{enumerate:3}: Let $\size:\N \rightarrow \N$ be a function such that 
\begin{equation*}
\max_i\{c_i\}\leq \size(g)\leq \min_i\{N_i\},
\end{equation*}
for every $g\in \N$.
\end{enumerate}
When the \mbox{$k$-th} bottleneck occurs, the size of the affected island is reduced to $\size(g-\underline{t}^N_{k})$ for $g \in \{\underline{t}_k^N+1,\ldots,\underline{t}_k^N+l^N_k\}$, thus we have that $\size_k^N(g)=\size(g+1)$ (see \eqref{eq:pop-size}). We start enumerating the bottleneck generations from $1$ and not $0$ to simplify later computations, we will go back to the former choice in Section~\ref{section:soft}.  
  \begin{remark}
Since $L$ does not depend on $N$, when we rescale time by a factor $N$ and consider the limit as $N \rightarrow \infty$, the duration of the bottlenecks is negligible. Nevertheless, since the population size on the affected island stays bounded during that time, the effect of these events remains visible in the limit, causing the jumps in the limiting allele-frequency process.
\end{remark}

\subsection{The frequency process}
We assume that each individual carries one of two possible alleles from the genetic type space $E=\{a,A\}$ and that each individual inherits its parent's type.
In this section, we investigate the behaviour of the proportion of individuals carrying allele $a$.
Given the initial population size $N$ and the initial type configurations, we consider the (non-Markovian) process $\mathbf{X}^N(g)=(X^{N}_1(g),\ldots,X^{N}_{\demes}(g))_{g\in \N_0}$ where $X^{N}_i(g)$ denotes the proportion of $a$ individuals in the $i$-th deme at time $g$.

\begin{remark}\label{remark:J1-top}
Since the size of a deme affected by a bottleneck is bounded, the frequency process can have big fluctuations throughout its duration. Hence we cannot obtain convergence to a jump-diffusion process in the Skorokhod J1 topology. As our aim is to still obtain a large population limit, we present a multi-dimensional version of the metric introduced in \cite[Section 3]{GMS_2022}. As the authors underline, the points that prevent the usual convergence are just the ones inside bottlenecks, where we observe large fluctuations.  However, this new metric will allow functions to be ``distant'' on a negligible set, and the set of points inside bottlenecks has Lebesgue measure $0$ when the time is rescaled by $N$ and $N \rightarrow \infty$.

For $T>0$, let $\mathcal{D}([0,T];\R^\demes)$ be the space of $\R^\demes$-valued c\`adl\`ag functions on $[0,T]$. Define also the set $\mathcal{I}$ of finite unions of half-open intervals

\begin{equation*}
\mathcal{I} = \{\cup_{i=1}^n[a_i,b_i):n\in\N,\, 0\leq a_i<b_i\leq T\},
\end{equation*}
and the set 
\begin{equation*}
\mathcal{F} = \{f:[0,T]\to[0,T]: \, f(0)=0,\, f(T)=T,\, f \text{ is continuous and strictly increasing}\}.
\end{equation*}

\begin{definition}\label{def:dmu}
Let $\lambda$ be the Lebesgue measure on $[0,T]$  and consider the identity function Id on $[0,T]$. For any $x_1,x_2\in \mathcal{D}([0,T];\R^\demes)$, we define the function $\overline{d}_{\lambda}$	 as
\begin{equation*}
\overline{d}_{\lambda}(x_1,x_2) = \inf_{A\in \mathcal{I},f\in \mathcal{F}}\{\norm{\1_A(x_1-x_2 \circ f)}_{\infty}\lor \norm{\text{Id}-f}_{\infty}\lor \lambda([0,T]\setminus A)\lor  \norm{x_1(T)-x_2(T)}\},	
\end{equation*}
where $\norm{\cdot}_{\infty}$ denotes the uniform norm and $\norm{\cdot}$ the Euclidean norm on $\R^\demes$.
\end{definition}
One can prove that $\overline{d}_\lambda(x_1,x_2)=0$ if and only if $x_1=x_2$ and that $\overline{d}_\lambda$ satisfies the triangle inequality following the steps of Proposition 3.2 in \cite{GMS_2022}. We then define the metric $d_\lambda$ on $\mathcal{D}([0,T];\R^\demes)$ as
\begin{equation*}
d_{\lambda}(x_1,x_2)=\frac{\overline{d}_{\lambda}(x_1,x_2)+\overline{d}_{\lambda}(x_2,x_1)}{2}.
\end{equation*}

The metric $d_\lambda$ is a slight modification of the Skorokhod metric on $\mathcal{D}([0,T];\R^\demes)$. From Proposition 3.2 of \cite{GMS_2022} it follows that convergence according to $d_\lambda$ implies convergence in measure on the Skorokhod space. It is also true that the usual convergence in the J1 topology implies convergence in $d_\lambda$.
\end{remark}

Our aim is to prove that the frequency process $\{\mathbf{X}^{N}(g)\}_{g\in \N_0}$ converges weakly, in the topology induced by $d_{\lambda}$, to a diffusion process with a \emph{Wright--Fisher} component and a \emph{jump} component. In order to describe the jumps in the limit, we need to understand the evolution of the frequency during a prototypical bottleneck. To this end, fix $i\in \{1,\ldots,\demes\}$ and $\tau \in \N$. We want to understand how the allele frequency changes during a bottleneck affecting deme $i$ and lasting $\tau$ generations. 
The first bottleneck generation consists of $\size(1)$ individuals, of which $\size(1)-c_{i}$ are descendants of parents on deme $i$, and $c_{i}=\sum_{j\neq i}c_{ji}$ are migrants from the other demes. 
At each following time step, $c_i$ new migrants arrive, hence the total number of families created during the complete duration of the bottleneck is equal to $\size(1)+c_i\, (\tau-1)$. 
\begin{remark}
Of the $c_i$ families that arrive at each step of the bottleneck in the affected deme $i$, some of them can be families that were in deme $i$ before. 
However, this would require a member of a specific family that previously migrated from deme $i$ to go back, and this happens with probability $O(1/N)$ so it is a negligible event in the bottleneck time scale.
\end{remark}

We now construct a Markov chain $\{\mathbf{A}^\i(r)\}_{r\in \N}=\{(\mathbf{A}^\i_1(r),\ldots,\mathbf{A}^\i_\demes(r)\}_{r\in \N}$  on $\N_0^{\demes \times \N}$ in such a way that, at time $r=\tau$, i.e.\  the end of the bottleneck, $\mathbf{A}^\i(\tau)$ \label{object:familysizesendofbottleneck} gives us the random number of descendants of all the families involved in the bottleneck, sorted by their deme of origin (see Figure \ref{fig:drastic-frequency}).
For every $j=\{1,\ldots,\demes\}$ and $r\in \N$, we denote by 
\begin{equation*}
\mathbf{A}^\i_j(r)=(a^\i_{jm}(r))_{m\in \N}
\end{equation*}
the sizes of the families generated by individuals originally from deme $j$, starting with
\begin{align*}
\mathbf{A}^\i_i(1)&=(\underbrace{1,\ldots,1}_{\text{$\size(1)-c_i$}},0,\ldots),\\
\mathbf{A}^\i_j(1)&=(\underbrace{1,\ldots,1}_{\text{$c_{ji}$}},0,\ldots) \hspace{0.6cm} \forall j\neq i.
\end{align*}
Notice that at time $r$, $A^\i_j(r)$ has at most $c_{ji}\cdot r$ non-zero entries if $j \neq i$, and $\size(1)-c_i$ if $j=i$. For every $r \in \N$, we have
\begin{equation*}
\sum_{j,m}a^\i_{jm}(r)=\size(r),
\end{equation*}
and we define the relative family sizes as 
\begin{equation}\label{eq:relative-size}
p^\i_{jm}(r):=a^\i_{jm}(r)/\size(r).
\end{equation}
Following the update mechanism described in Section \ref{sec:model}, in generation $r+1$, $\size(r+1)-c_i$ individuals choose a parent belonging to family $jm$ with probability $p^\i_{jm}(r)$, and $c_{ji}$ new migrants arrive from deme $j$ each creating a new family of size $1$.
Hence we have
\begin{align*}
a^\i_{jm}(r+1)&= \text{number of offspring of family $jm$} \hspace{0.5cm} \text{if } j\neq i \text{ and } m\in \{1,\ldots,c_{ji}\cdot r\}, \\
a^\i_{jm}(r+1)&= 1 \hspace{0.5cm} \text{if } j\neq i \text{ and } m\in \{c_{ji}\cdot r+1,\ldots,c_{ji}\cdot(r+1)\}, \\
a^\i_{im}(r+1)&= \text{number of offspring of family $im$} \hspace{0.5cm} \text{if } m\in \{1,\ldots,\size(1)-c_i)\}.
\end{align*}

\begin{figure}[h]
\begin{center}
\includegraphics[scale=0.75]{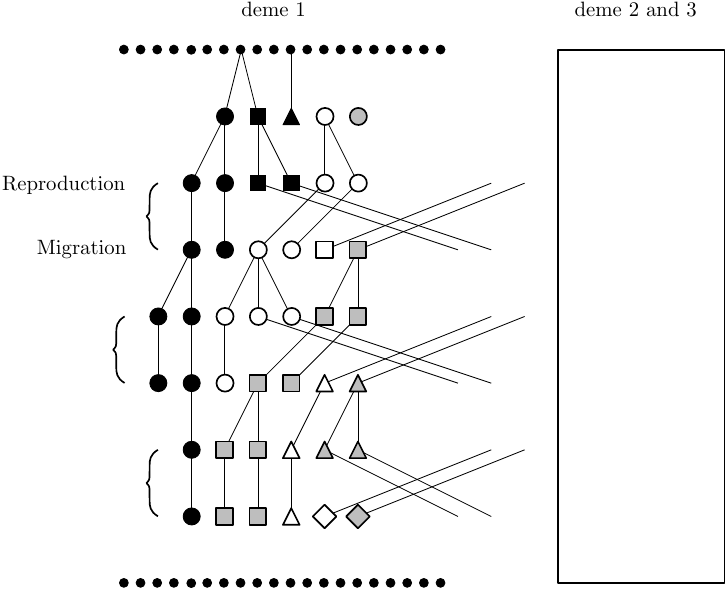}
	
\end{center}
\caption{In this picture we show the evolution of the population with $\demes=3$ when a bottleneck affects deme $1$ for $\tau=4$ generations and with $c_1=2$. At each step of the bottleneck two individuals come from the other demes: one from the second (white) and one from the third (grey). In $\size(1)+c_1(\tau-1)=11$ different colours and shapes we highlight the families whose size we want to know at the end of the bottleneck. Their sizes at the end of the bottleneck are $\mathbf{A}^1_1(\tau)=(1,0,0),\; \mathbf{A}^1_2(\tau)=(0,0,1,1),\; \mathbf{A}^1_3(\tau)=(0,2,0,1)$. }\label{fig:drastic-frequency}
\end{figure}
\begin{remark}
As the sizes of the demes unaffected by the bottleneck remain of order $N$, as $N \to \infty$, and the bottleneck's duration is negligible in the scaling limit, the frequency there will remain constant up to an error of order $1/N$. For this reason, we assume it to be constant even in the pre-limiting model. 
\end{remark}

Once we have these family sizes at the end of the bottleneck, we are able to write down the final proportion of type $a$ individuals. Assuming that the frequency right before the bottleneck was $\mathbf{x}=\{x_1,\ldots ,x_D\}\in [0,1]^D$, in the first step, the $\size(1)-c_i$ individuals independently choose an $a$ parent with probability $x_i$, and the other $c_i=\sum_{j\neq i}c_{ji}$ with probability $x_j$, depending on which deme $j$ is the origin of each of their ancestor. At each following step, new migrants from demes $j$ for $j \neq i$ generate $c_{ji}$ new families which are of type $a$ with probability $x_j$. Therefore, once the founder of a family $jm$ chooses a type, $a^\i_{jm}(\tau)$ descendants have the same type at the end of the bottleneck. The proportion of type $a$ individuals in the last generation of the bottleneck is then
\begin{equation}\label{eq:drastic-final-freq	}
\tilde{X}^{\i,\mathbf{x}}(\tau):=\frac{1}{\size(\tau)} \sum_{j=1}^{\demes}\sum_{m=1}^{\infty} a^\i_{jm}(\tau)B_{jm}^{x_j},	
\end{equation}
where the $B_{jm}^{x_j}$'s are independent Bernoulli random variables with parameter $x_j$. Notice that, for every $j$, $a^\i_{jm}(\tau)=0$ whenever $l>\size(1)+c_i(\tau-1)$, hence the sums are actually finite.

We are now ready to write down the limit of the frequency process as $N \rightarrow \infty$.

\begin{theorem}\label{thm:drastic-diffusion}
Fix $N\in \N$, $\gamma>0$ and $L$ a probability measure on $\N$. Fix also a sequence $\{l_k^N\}_{k\in \N}$, a sequence $\{s_k^N\}_{k\in \N}$ and a function $\size:\N \rightarrow \N$ satisfying \ref{enumerate:1}, \ref{enumerate:2}, \ref{enumerate:3} respectively.  Let $A^\i(\tau,\cdot)$ be the distribution of the matrix $\mathbf{A}^\i(\tau)\in  \N_0^{\demes\times \N}$ of the final family sizes after a bottleneck affecting deme $i$ for $\tau$ generations with population size determined by $\size$. Let $\{\mathbf{X}^N(g)\}_{g\in \N_0}$ be the frequency process of the structured Wright--Fisher model with drastic bottlenecks parametrised by $N$, $\gamma$, $L$, and $\size$. Assume $\mathbf{X}^N(0)\xRightarrow[N\rightarrow \infty]{} \mathbf{X}(0)$ weakly. Then, for any $T>0$,
\begin{equation*}
\big(\mathbf{X}^N({\lfloor Nt\rfloor})\big)_{0 \leq t \leq T} \xRightarrow[N\rightarrow \infty]{d_{\lambda}} \big(\mathbf{X}(t)\big)_{0 \leq t \leq T}\,,
\end{equation*}
where $\xRightarrow{d_{\lambda}}$ denotes weak convergence in the topology induced by $d_{\lambda}$, and \\$\mathbf{X}(t)=(X_1(t),\ldots,X_{\demes}(t))$ is the unique strong solution of the following system of SDEs:
\begin{equation}\label{eq:drastic-diffusion}
\begin{split}
dX_i(t)=&\sum_{j\neq i}\frac{c_{ji}}{\omega_i}(X_j(t)-X_i(t))dt+\sqrt{\frac{1}{\omega_i}X_i(t)(1-X_i(t))}dB_i(t)\,\\&+\frac{1}{\demes}\int_{\mathcal{S}}\left(\frac{1}{\size(\tau)}\Big[\sum_{j=1}^{\demes}\sum_{m=1}^{\infty}a^\i_{jm}(\tau)\1_{\{u_{jm}\leq X_j(t^-)\}}\Big]-X_i(t^-)\right)\mathcal{N}_i(dt,d\tau,da,du)
\end{split}
\end{equation}
for $i=1,\ldots,\demes$. 
Here $(B_i(t))_{t \geq 0}$, $i=1,\ldots ,\demes$, are independent Brownian  motions,
\begin{equation*}
\mathcal{S}:=\N_0 \times \N_0^{\demes \times \N} \times [0,1]^{\demes \times \N},
\end{equation*}
 and $\mathcal{N}_i$ are independent Poisson random measures on $(0,\infty)\times \mathcal{S}$ with intensity given by $\gamma dt \otimes \mathfrak{A}^\i(d\tau,da)\otimes du$, where $du$ is the Lebesgue measure on $[0,1]^{\demes \times \N}$ and $\mathfrak{A}^\i$  is the semidirect product of $L$ and the probability kernel $A^\i(\cdot,\cdot)$.
\end{theorem}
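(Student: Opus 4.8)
The plan is to argue by interlacing. After rescaling time by $N$, assumption \ref{enumerate:2} forces bottlenecks to arrive at a \emph{finite} rate, so the limiting jump term in \eqref{eq:drastic-diffusion} has finite activity: the claim is that outside bottlenecks the rescaled process converges to the structured Wright--Fisher diffusion (the drift and Brownian terms), while each of the finitely many bottlenecks in $[0,T]$ contributes one jump. I would first settle well-posedness of \eqref{eq:drastic-diffusion}. The migration drift is globally Lipschitz and the diffusion coefficient $\sqrt{X_i(1-X_i)/\omega_i}$ is H\"older-$1/2$ on $[0,1]^\demes$, so pathwise uniqueness of the continuous part follows from a Yamada--Watanabe argument; since the intensity $\gamma\,dt\otimes\mathfrak A^\i(d\tau,da)\otimes du$ is finite and the jumps are bounded (all frequencies lie in $[0,1]$), a unique strong solution is obtained by solving the continuous SDE between consecutive atoms of $\sum_i\mathcal N_i$ and inserting the jump $X_i(t^-)\mapsto\tilde X^{\i,\mathbf x}(\tau)$ at each atom. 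A preliminary point is that $\mathbf X^N$ is not Markov; I would augment it with the bottleneck phase (whether a bottleneck is in progress, on which deme, and how many generations remain), or equivalently condition on the schedule $\{s_k^N,l_k^N,\beta_k\}$ and exploit the resulting regeneration structure.

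Next I would identify the two ingredients separately. On each inter-bottleneck interval $\mathbf X^N(\Nt)$ is a structured neutral Wright--Fisher chain, and a standard generator computation (as in \cite{Mohle98}) gives convergence to the diffusion with migration rates $c_{ji}/\omega_i$ and local variance $X_i(1-X_i)/\omega_i$: there are $N$ generations per unit time, deme $i$ has $\omega_i N$ individuals, and each generation exchanges a fixed batch $c_{ji}$ with deme $j$ and performs one Wright--Fisher resampling, which produces exactly these coefficients. For the bottleneck clock, \ref{enumerate:2} gives $s_k^N/N\Rightarrow\mathrm{Exp}(\gamma)$, so by independence the rescaled start times converge to a Poisson process of rate $\gamma$ on $[0,T]$, while the i.i.d.\ uniform $\beta_k$ select the affected deme; together these account for the prefactor $1/\demes$ and the independent measures $\mathcal N_i$. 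For the jump law, \ref{enumerate:1} gives $l_k^N\Rightarrow L$, and by construction the chain $\mathbf A^\i$ has, at the bottleneck length $\tau$, law $A^\i(\tau,\cdot)$, so $(\tau,\mathbf A^\i(\tau))$ is distributed as $\mathfrak A^\i$. Freezing the neighbouring frequencies $x_j$ (justified, as in the preceding remark, because unaffected demes change only by $O(1/N)$ over the negligible bottleneck duration) and writing each founder's type as $B_{jm}^{x_j}=\1_{\{u_{jm}\le x_j\}}$ identifies the post-bottleneck frequency $\tilde X^{\i,\mathbf x}(\tau)$ with the integrand of the jump term in \eqref{eq:drastic-diffusion}.

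The delicate step, which I expect to be the main obstacle, is to combine these into weak convergence in the topology induced by $d_\lambda$. The within-bottleneck fluctuations caused by the $O(1)$ deme size are exactly what rules out J1 convergence; however each bottleneck window has rescaled length $O(l_k^N/N)=O(1/N)$ and there are $O_P(1)$ of them in $[0,T]$, so their union $W_N$ satisfies $\lambda(W_N)\to 0$. I would therefore take $A=[0,T]\setminus W_N\in\mathcal I$ together with a time change $f\in\mathcal F$ matching the finitely many limiting jump times to the pre-limit bottleneck starts: on $A$ the rescaled path agrees with the interlaced diffusion by the inter-bottleneck convergence, the term $\lambda([0,T]\setminus A)$ vanishes, and the terminal discrepancy $\norm{x_1(T)-x_2(T)}$ is controlled because the post-bottleneck frequency converges to $\tilde X^{\i,\mathbf x}(\tau)$. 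Making this rigorous requires tightness in $d_\lambda$ and, on a common probability space via Skorokhod representation, the \emph{joint and consistent} convergence of the diffusive excursions, the bottleneck clock, and the jump outcomes, so that the interlacing lands at the correct post-bottleneck value despite the negligible but highly oscillatory windows. This adapts the one-dimensional construction of \cite{GMS_2022} to the present multi-dimensional, migration-coupled setting.
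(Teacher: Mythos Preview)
Your plan is broadly correct and identifies the right ingredients (finite-activity jumps, diffusive inter-bottleneck limit, negligible bottleneck windows), but it diverges from the paper's route at the key technical step. Where you propose to interlace directly---conditioning on the schedule, proving convergence of diffusive excursions on random inter-bottleneck intervals, and then gluing via Skorokhod representation---the paper instead introduces an auxiliary \emph{time-homogeneous Markov} process $\mathbf V^N$ obtained by collapsing each bottleneck to its final generation. This turns the problem into two decoupled pieces: (i) show $d_\lambda(\mathbf X^N,\mathbf V^N)\to 0$ in probability, using exactly the set $A=[0,T]\setminus W_N$ and a piecewise-linear time change you describe (plus a careful Step~1 argument ruling out a bottleneck near the horizon, needed because $\mathbf V^N$ sees $\delta^N$ extra generations); (ii) prove $\mathbf V^N\Rightarrow\mathbf X$ in J1 by ordinary generator convergence, which is now available because $\mathbf V^N$ is Markov.

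The advantage of the paper's route is modularity: all the non-Markovianity and the within-bottleneck oscillations are absorbed into the $d_\lambda$-distance estimate, after which standard Feller-process tools (Kallenberg's Theorems 19.25 and 19.28) apply verbatim to $\mathbf V^N$. Your interlacing approach is not wrong, but the ``joint and consistent convergence'' you flag as delicate really is: you would need to control the diffusive excursions on random, $N$-dependent intervals and feed each post-bottleneck value into the next excursion, and tightness in $(\mathcal D,d_\lambda)$ is not automatic since that space is not known to be Polish. The $\mathbf V^N$ device sidesteps all of this.
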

\begin{remark}
The last term of \eqref{eq:drastic-diffusion} describes the change of frequency due to a drastic bottleneck; in particular the integrand reflects the expression \eqref{eq:drastic-final-freq	} and the uniform random variables are being used to generate the randomness from the Bernoulli’s. Additionally for any $K\subset \N_0$ and $A\subset \N_0^{\demes\times \N}$	 we have \[\mathfrak{A}^\i(K,A)=\int_K\int_A A^\i(\tau,da)L(d\tau).\]
\end{remark}

Before proving Theorem \ref{thm:drastic-diffusion} we have to verify that \eqref{eq:drastic-diffusion} has a unique strong solution; we prove this fact in Proposition \ref{prop:strong-sol}.
\begin{center}
\begin{figure}
\centering
\includegraphics[scale=0.75]{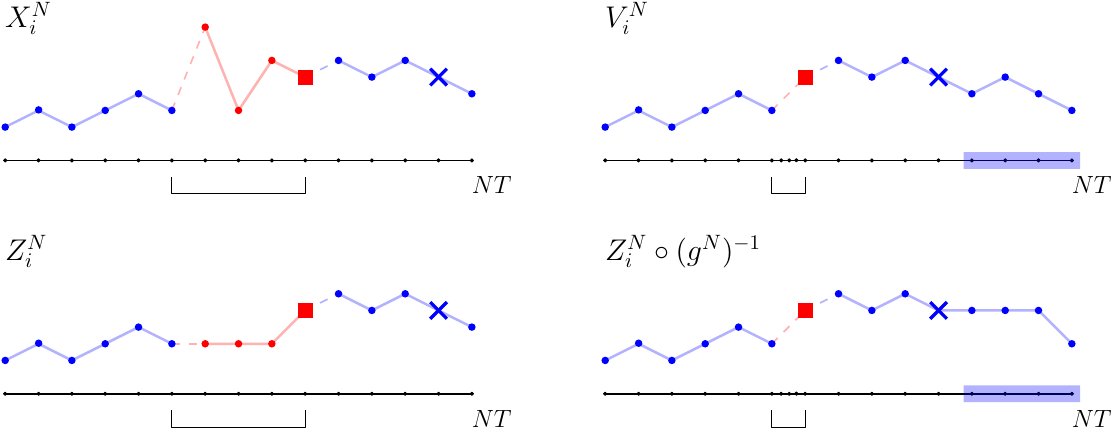}
\caption{In this figure we show a possible realisation of the discrete processes $X^N$, $V^N$, $Z^N$, \mbox{$Z^N\circ (g^N)^{-1}$}. In red, underlined by a square bracket, we draw what is happening during a bottleneck. In particular, we only represent the affected deme; with a red square we highlight the last generation of a bottleneck and with a blue cross the second to last generation $NT-1$ (left-hand side) which becomes $NT-\delta^N-1$ in the time scale that skips the interior of the bottlenecks (right-hand side). We highlight in a light blue rectangle the only area where $V^N$ and $Z^N \circ (g^N)^{-1}$ do not coincide.}\label{fig:X,Z,V processes}
\end{figure}
\end{center} 
\begin{proof}[Proof of Theorem \ref{thm:drastic-diffusion}]
For simplicity, we assume that $T$ is an integer, otherwise we can consider $\lfloor T \rfloor$. As we pointed out in Remark \ref{remark:J1-top}, the fluctuations of the pre-limiting frequency process $\mathbf{X}^N$ prevent us from obtaining convergence in the Skorokhod J1 topology. Therefore, we will work with an intermediate time-homogeneous Markov process $\mathbf{V}^N$ that we prove to be close, with respect to $d_{\lambda}$, to $\mathbf{X}^N$. Then we will prove that the generator of $\mathbf{V}^N$ converges to that of the limit $\mathbf{X}$. Note that the process $\mathbf{X}^N$ is not Markovian, so we could not use the convergence of generators for $\mathbf{X}^N$.

Let $k_0=0$ and $k_g$ be the $g$-th generation not in the set $\cup_{k=1}^{\infty}\{\underline{t}^N_k+1,\ldots ,\overline{t}^N_k-1\}$ and define the process $\{\mathbf{V}^N(g)\}_{g \in \N_0}$ as
\begin{equation*}
\mathbf{V}^N(g):=\mathbf{X}^N(k_g).
\end{equation*}
In other words, $\mathbf{V}^N$ sees each bottleneck as lasting only one generation (the last one), thus recovering the Markov property and the time homogeneity. 
Since $\mathbf{V}^N$ skips most of each bottleneck, when we run it up until the time $NT$, we see some additional generations that we do not see for $\mathbf{X}^N$, see Figure \ref{fig:X,Z,V processes},  and if a bottleneck occurs in this time frame, the two processes may be quite far apart. To prove that the distance between these two processes is small, we then need to verify that they see the same number of bottlenecks. This is the first step of the proof.
In the second step, we prove that 
$$d_{\lambda}\big((\mathbf{X}^N(\Nt))_{0\leq t \leq T},(\mathbf{V}^N(\Nt))_{0\leq t \leq T}\big)\xrightarrow{N\rightarrow \infty} 0 \quad \text{in probability}.$$
In the third step, we prove the convergence of the generator of $\mathbf{V}^N$ to that of the jump diffusion~\eqref{eq:drastic-diffusion}.\\

\noindent\textbf{Step 1.} Let $m^N_T$ be the index of the last bottleneck ending before generation $NT-1$, i.e.\
\begin{equation*}
m^N_T:=\sup\{k:\overline{t}^N_k < NT-1\},
\end{equation*}
and $\delta^N:=\sum_{k=1}^{m^N_T}(l^N_k-1)$ be the total number of these bottlenecks' generations that $\mathbf{V}^N$ skips. We want to prove that eventually there is a.s.\ no bottleneck in the time interval $[NT-1,NT+\delta^N]$; note that this implies that $\mathbf{X}^N$ and $\mathbf{V}^N$ see the same number of bottlenecks.
In order to do that, we show that the probability of the complementary event \[\mathfrak{B}_N:=\big\{\omega \in \Omega\, :\, \exists k\in \N \text{ s.t.\ } \{NT-1,NT\ldots ,NT+\delta^N\} \cap \{\underline{t}^N_k+1,\underline{t}^N_k+2,\ldots ,\overline{t}^N_k\}\neq \varnothing  \big\},\] tends to zero as $N \to \infty$. Heuristically, since bottlenecks happen every $O(N)$ generations, $m^N_T$ is a.s.\ finite. Hence the additional generations that we see in the path of $V^N$ is $o(N)$ which is too short a time for another bottleneck to occur.

We start by noticing the following:
\begin{itemize}
\item \label{condition A.2}  For all $ \varepsilon>0$ there exist $K_\varepsilon,N_\varepsilon\in \N$ such that, for all $ N\geq N_\varepsilon$,
\begin{equation*}
\P(l^N_1\geq K_\varepsilon)<\epsilon.	
 \end{equation*}
 This is true since, by Assumption \ref{enumerate:1}, $l^N_1$ converges in distribution to a probability measure $L$ on $\N$. Hence, if, by contradiction, there exists $\epsilon>0$ such that 
 \[\forall K,N\in \N \quad \exists N_K\geq N \text{ such that } \P(l^{N_K}_1\geq K)>\epsilon,\]
 we would have $L(\infty)\geq \epsilon$, which cannot be true.
\item \label{condition A.1} The same argument applies to $m^N_T$, i.e.\ for all $\varepsilon>0$ there exist $K_\varepsilon,N_\varepsilon\in \N$ such that for all $ N\geq N_\varepsilon$
\begin{equation*}
\P(m^N_T\geq K_\varepsilon)<\epsilon.	
 \end{equation*}
In fact, we have that $m^N_T\leq \max\{k:s^N_1+\ldots +s^N_k<NT+1\}$ a.s.\ and \ref{enumerate:2} implies that $\max\{k:s^N_1+\ldots +s^N_k<NT+1\}$ converges in distribution to a Poisson random variable with parameter $\gamma\,T$.
  \end{itemize}
Next, define the following event
\[\mathfrak{B}_{N,k}:=\big\{\omega \in \Omega\, : \{NT-1,NT,\ldots ,NT+\delta^N\} \cap \{\underline{t}^N_k+1,\underline{t}^N_k+2,\ldots ,\overline{t}^N_k\}\neq \varnothing  \big\},\]
and fix $\varepsilon>0$. We then know that there exist $K_\varepsilon,N_\varepsilon\in \N$ such that for all $ N\geq N_\varepsilon$
\begin{equation*}
\P(m^N_T\geq K_\varepsilon)<\frac{\epsilon}{4},	
 \end{equation*}
 and additionally there exist $\overline{K}_\varepsilon,\overline{N}_\varepsilon\in \N$ and $K^*_\varepsilon,N^*_\varepsilon\in \N$ such that for all $ N\geq \overline{N}_\varepsilon $ 
 \begin{equation*}
	\P(l^N_1\geq \overline{K}_\varepsilon)<\frac{\varepsilon}{4(K_\varepsilon)^2},
 \end{equation*}
 and for all $ N\geq N^*_\varepsilon$ 
 \begin{equation*}
 \P(l^N_1\geq K^*_\varepsilon)<\frac{\varepsilon}{4(K_\varepsilon)^2\overline{K}_\varepsilon}.
 \end{equation*}
We then have
\begin{align}
&\P\left(\mathfrak{B}_N\right)=\sum_{m\in \N}\sum_{d \in \N}\sum_{k=2}^{NT-1}\P\left(\mathfrak{B}_N\,,\,m^N_T=m\,,\, \delta^N=d\,,\,\overline{t}^N_{m^T_N}=NT-k\right)\notag\\
&=\sum_{m\in \N}\sum_{d \in \N}\sum_{k=2}^{NT-1}\P\left(\mathfrak{B}_{N,m+1},\,m^N_T=m\,,\, \delta^N=d\,,\,\overline{t}^N_{m}=NT-k \right) \notag\\
&\leq \sum _{m=1}^{K_\varepsilon}\sum_{d \in \N}\sum_{k=2}^{NT-1}\P\left(\mathfrak{B}_{N,m+1},\,m^N_T=m\,,\, \delta^N=d\,,\,\overline{t}^N_{m}=NT-k \right) + \P(m^N_T\geq K_\varepsilon)\notag \\
&\leq \sum _{m=1}^{K_\varepsilon}\sum_{d=1}^{K_\varepsilon \overline{K}_\varepsilon}\sum_{k=2}^{NT-1}\P\left(\mathfrak{B}_{N,m+1},\,m^N_T=m\,,\, \delta^N=d\,,\,\overline{t}^N_{m}=NT-k \right)\label{eq:NT-bott2-1}\\
&\quad+\sum_{m=1}^{K_\varepsilon}\sum_{d\geq K_\varepsilon\overline{K}_\varepsilon}\P(m^N_T=m,\delta^N=d)\label{eq:NT-bott2-2}\\
&\quad + \P(m^N_T\geq K_\varepsilon)\label{eq:NT-bott2-3}.
\end{align}
Now for \eqref{eq:NT-bott2-3} we have, for all $ N\geq N_\varepsilon$,

\begin{equation}\label{eq:mNT}
\begin{split}
\P(m^N_T\geq K_\varepsilon)< \frac{\varepsilon}{4},
\end{split}
\end{equation}
for \eqref{eq:NT-bott2-2} we get, for all $ N\geq \overline{N}_\varepsilon$,
\begin{equation*}
\begin{split}
&\sum_{m=1}^{K_\varepsilon}\sum_{d\geq K_\varepsilon\,\overline{K}_\varepsilon}\P(m^N_T=m,\delta^N=d)=\sum_{m=1}^{K_\varepsilon}\P\left(\sum_{k=1}^{m_T^N}(l^N_k-1)\geq K_\varepsilon\,\overline{K}_\varepsilon,\, m^N_T=m  \right)
\\&\leq K_\varepsilon \,\P\left(\sum_{k=1}^{K_\varepsilon}l^N_k\geq K_\varepsilon\,\overline{K}_\varepsilon\right)\leq K_\varepsilon\,\P\left(\exists k\in \{1,\ldots ,K_\varepsilon\}:l^N_k\geq \overline{K}_\varepsilon \right)
\\&\leq (K_\varepsilon)^2 \P(l^N_1\geq \overline{K} _\varepsilon)<\frac{\varepsilon}{4}.
\end{split}
\end{equation*}
Note that the last two estimates imply also that $\delta^N/N\to 0$ in probability.
Finally, we focus on~\eqref{eq:NT-bott2-1} and get, for all $N\geq N^*_\varepsilon$,
\begin{equation*}
\begin{split}
&\sum _{m=1}^{K_\varepsilon}\sum_{d=1}^{K_\varepsilon \overline{K}_\varepsilon}\sum_{k=2}^{NT-1}\P\left(\mathfrak{B}_{N,m+1},\,m^N_T=m\,,\, \delta^N=d\,,\,\overline{t}^N_{m}=NT-k \right)\\
&=\sum _{m=1}^{K_\varepsilon}\sum_{d=1}^{K_\varepsilon \overline{K}_\varepsilon}\sum_{k=2}^{NT-1}\sum_{l=1}^{\infty}\P\left(k-l-1 \leq s^N_{m+1}<k+d\,,\, l^N_{m+1}=l\,,\, \delta^N=d\,,\,\overline{t}^N_{m}=NT-k \right)\\
&\leq \sum _{m=1}^{K_\varepsilon}\sum_{d=1}^{K_\varepsilon \overline{K}_\varepsilon}\sum_{k=2}^{NT-1}\sum_{l=1}^{K^*_\varepsilon}\P\left(k-l-1 \leq s^N_{m+1}<k+d\right) \P(l^N_{m+1}=l)\P\left(\overline{t}^N_{m}=NT-k \right)\\
&\quad +\sum _{m=1}^{K_\varepsilon}\sum_{d=1}^{K_\varepsilon \overline{K}_\varepsilon}\P(l^N_{m+1}\geq K^*_\varepsilon).
\end{split}
\end{equation*}
Since the variables $s^N_k$ are geometrically-distributed with parameter $\gamma/N$, the above expression is equal to
\begin{equation*}
\begin{split}
&\sum _{m=1}^{K_\varepsilon}\sum_{d=1}^{K_\varepsilon \overline{K}_\varepsilon}\sum_{k=2}^{NT-1}\sum_{l=1}^{K^*_\varepsilon}\sum_{s=k-l-1}^{k+d-1}\frac{\gamma}{N}\left(1-\frac{\gamma}{N}\right)^{s-1}\P(l^N_{m+1}=l)\P\left(\overline{t}^N_{m}=NT-k \right)\\
&\quad +\sum _{m=1}^{K_\varepsilon}\sum_{d=1}^{K_\varepsilon \overline{K}_\varepsilon}\P(l^N_{m+1}\geq K^*_\varepsilon)\\
&\leq \sum _{m=1}^{K_\varepsilon}\sum_{d=1}^{K_\varepsilon \overline{K}_\varepsilon}\frac{\gamma}{N}K^*_\varepsilon\left(K^*_\varepsilon+d\right)+\sum _{m=1}^{K_\varepsilon}\sum_{d=1}^{K_\varepsilon \overline{K}_\varepsilon}\P(l^N_{m+1}\geq K^*_\varepsilon)\\
&\leq \frac{\gamma}{N}(K_\varepsilon)^2\overline{K}_\varepsilon K^*_\varepsilon\left(K^*_\varepsilon+K_\varepsilon \overline{K}_\varepsilon\right)+(K_\varepsilon)^2\overline{K}_\varepsilon \P(l^N_{m+1}\geq K^*_\varepsilon)<\frac{\varepsilon}{2},
\end{split}
\end{equation*}
where in the last step we assumed, without loss of generality, that for all $N\geq N^*_\varepsilon$ one has $$\frac{\gamma}{N}(K_\varepsilon)^2\overline{K}_\varepsilon K^*_\varepsilon\left(K^*_\varepsilon+K_\varepsilon \overline{K}_\varepsilon\right)<\frac{\varepsilon}{4}.$$
Putting everything together we have $\P(\mathfrak{B}_N)\longrightarrow \, 0  \text{ as }\,N\to \infty.$ \\

\noindent \textbf{Step 2.}
Our goal is to prove that the distance $d_\lambda$ between $\{\mathbf{X}^N(g)\}_{g \in \N_0}$ and $\{\mathbf{V}^N(g)\}_{g \in \N_0}$ is small in probability as $N \rightarrow \infty$. To do so, we define an intermediate process $\mathbf{Z}^N$ which, at each bottleneck, remains constant until the second to last generation and takes the same value as $\mathbf{X}^N$ in the last generation. More precisely, let $\pi^N$ be the random projection $\pi^N(0)=0$ and $\pi^N(g)=\max\{k \in \N_0: k \leq g, k \notin \cup_{m=1}^{\infty}\{\underline{t}^N_m+1,\ldots ,\overline{t}^N_m-1\}\}$; we define the process 
\begin{equation*}
\mathbf{Z}^N(g):=\mathbf{X}^N(\pi^N(g)),
\end{equation*}
see Figure \ref{fig:X,Z,V processes} for a visual representation of this process. 
By the triangle inequality
\begin{equation*}
\begin{split}
&d_{\lambda}\big((\mathbf{X}^N(\Nt))_{0\leq t \leq T},(\mathbf{V}^N(\Nt))_{0\leq t \leq T}\big)\\&\leq d_{\lambda}\big((\mathbf{X}^N(\Nt))_{0\leq t \leq T},(\mathbf{Z}^N(\Nt))_{0\leq t \leq T}\big)+d_{\lambda}\big((\mathbf{Z}^N(\Nt))_{0\leq t \leq T},(\mathbf{V}^N(\Nt))_{0\leq t \leq T}\big).
\end{split}
\end{equation*}
The distance $d_\lambda$ between $\mathbf{X}^N$ and $\mathbf{Z}^N$ is small because they coincide other that during bottlenecks, which is a negligible set of times in the limit $N \to \infty$; on the other hand we will define a proper time change $f^N$ that shows that also $\mathbf{Z}^N$ and $\mathbf{V}^N$ are close with respect to $d_\lambda$.

We start by computing $d_{\lambda}\big((\mathbf{V}^N(\Nt))_{0\leq t \leq T},(\mathbf{Z}^N(\Nt))_{0\leq t \leq T}\big)$. To this end, we consider the function $g^N:[0,NT]\to[0,NT]$ that is the piecewise linear function having slope
\begin{equation*}\left\{
\begin{array}{lll}
1/l^N_k  & \,\,\, \text{if} \quad\lfloor t\rfloor \in [\underline{t}_k^N,\overline{t}^N_k),\\
1+\sum_{k=1}^{m^N_T}(l^N_k-1) &\,\,\, \text{if} \quad\lfloor t\rfloor \in [NT-1,NT],\\
1 &\,\,\,  \text{elsewhere},
 \end{array}\right.
\end{equation*} 
where we recall that $m^N_T=\sup\{k:\overline{t}^N_k < NT-1\}$, see Figure \ref{fig:function}. Hence, $(g^N)^{-1}$ accelerates time at the occurrence of bottlenecks so that their length is reduced to one unit. Finally, when we reach the last generation before the time horizon, we slightly decrease the slope to get $(g^N)^{-1}(NT)=NT$. 
\begin{figure}
\begin{center}
\includegraphics[scale=0.5]{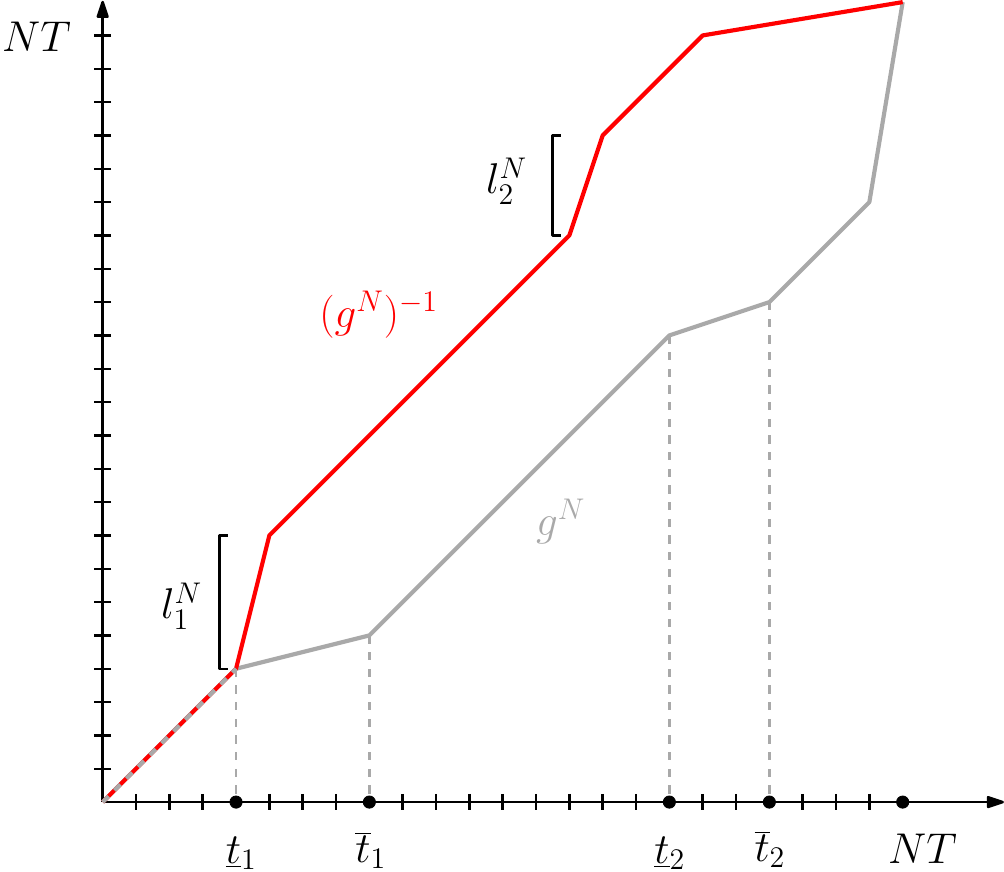}
\end{center}
\caption{A realization of the functions $g^N$ and $(g^N)^{-1}$ when $m^N_T=2$.}\label{fig:function}	
\end{figure}

Given this function, we define the time change $f^N\in \mathcal{F}$ as 
\begin{equation}\label{eq:time-change}
f^N(t)=g^N(Nt)/N \quad \text{ for } t\in[0,T].
\end{equation}
Note that $\lfloor N(f^N)^{-1}(\cdot)\rfloor=\lfloor (g^N)^{-1}(N\cdot)\rfloor$. To bound $\overline{d}_{\lambda}(\mathbf{V}^N(\Nt),\mathbf{Z}^N(\Nt))$, see Definition~\ref{def:dmu}, we choose $A=[0,T)$ and $f=(f^N)^{-1}$ and for $\overline{d}_{\lambda}(\mathbf{Z}^N(\Nt),\mathbf{V}^N(\Nt))$, we choose $A=[0,T)$ and $f=f^N$; thus
\begin{equation}\label{eq:dist-V-Z}
\begin{split}
&d_{\lambda}\big((\mathbf{V}^N(\Nt))_{0\leq t \leq T},(\mathbf{Z}^N(\Nt))_{0\leq t \leq T}\big)\leq \norm{\mathbf{V}^N(\lfloor N\cdot\rfloor)-\mathbf{Z}^N \big( \lfloor N(f^N)^{-1}(\cdot)\rfloor \big)}_{\infty}\\&+\norm{\text{Id}-(f^N)^{-1}}_{\infty}+\norm{\mathbf{V}^N(NT)-\mathbf{Z}^N\big(N(f^N)^{-1}(T)\big)}.	
\end{split}
\end{equation}
We have that $\mathbf{V}^N$ and $\mathbf{Z}^N\circ(f^N)^{-1}$ coincide up until the blue cross of the right-hand side of Figure~\ref{fig:X,Z,V processes}, i.e.\ 
\begin{equation*}
\mathbf{V}^N(\Nt)=\mathbf{Z}^N(\lfloor N(f^N)^{-1}(t)\rfloor) \quad \forall t< T-\delta^N/N.
\end{equation*}
Therefore, to compute the uniform norm in \eqref{eq:dist-V-Z}, we can focus just on the interval $[T-\delta^N/N,T)$ in which $\mathbf{Z}^N$ is constant and equal to $\mathbf{V}^N(N(T-\delta^N/N))$. Conditioning on $\mathfrak{B}_N^c$, we then have
\begin{equation*}
\begin{split}
&\P\left(\norm{\mathbf{V}^N(\lfloor N\cdot\rfloor)-\mathbf{Z}^N\big(\lfloor N(f^N)^{-1}(\cdot)\rfloor \big)}_{\infty}>\epsilon\right)\\&\leq \P\left(\sup_{t\in [0,1)}\left\{\mathbf{V}^N(\lfloor N(T-t\delta^N/N)\rfloor)-\mathbf{V}^N(N(T-\delta^N/N))\right\}>\epsilon \right),
\end{split}
\end{equation*}
which tends to zero by the Portemanteau theorem, \cite[Theorem 13.16]{Klenke}. Indeed, $\delta^N/N\to 0$ in probability and we will prove in the following step that $\mathbf{V}^N(\lfloor N\cdot \rfloor)$ converges to a diffusion $\mathbf{X}$ whose only discontinuities are due to bottlenecks; hence we can apply the above mentioned theorem using continuous test functions such that $\sup_{t\in [0,1)}\left\abs f(T-t\delta)-f(T-\delta)\right\abs >\epsilon$ for some small $\delta>0$.

Hence $\norm{\mathbf{V}^N(\lfloor N\cdot\rfloor)-\mathbf{Z}^N \big( \lfloor N(f^N)^{-1}(\cdot)\rfloor \big)}_{\infty} \to 0$ in probability.
The same observation also shows that $\norm{\mathbf{V}^N(NT)-\mathbf{Z}^N\big(N(f^N)^{-1}(T)\big)}$ converges to zero in probability. Finally, we have
\begin{equation*}
\norm{\text{Id}-(f^N)^{-1}}_{\infty}=\frac{\delta^N}{N}\xrightarrow{N\rightarrow \infty}0 \quad  \text{in probability},
\end{equation*}
so that we can conclude 
\begin{equation*}
d_{\lambda}\big((\mathbf{V}^N(\Nt))_{0\leq t \leq T},(\mathbf{Z}^N(\Nt))_{0\leq t \leq T}\big)\xrightarrow{N\rightarrow \infty}0 \quad  \text{in probability}.
\end{equation*}
To bound $d_{\lambda}\big((\mathbf{X}^N(\Nt))_{0\leq t \leq T},(\mathbf{Z}^N(\Nt))_{0\leq t \leq T}\big)$, we consider the set \[U^N=\bigcup_{k=1}^{m^N_T}\Big(\,\frac{\underline{t}^N_k}{N},\frac{\overline{t}^N_k}{N}\Big].\] 
We have
\begin{equation*}
\norm{\1_{(U^N)^c}\big(\mathbf{Z}^N(N\,\cdot)-\mathbf{X}^N(N\, \cdot)\big)}_{\infty}= 0 \quad \text{in probability,}
\end{equation*}
since in $(U^N)^c$ there are no bottlenecks so that $\mathbf{Z}^N$ and $\mathbf{X}^N$ coincide. As 
\begin{equation*}
\lambda(U^N)\xrightarrow{N\rightarrow \infty} 0 \quad \text{in probability},
\end{equation*}
we get
\begin{equation*}
d_{\lambda}\big((\mathbf{X}^N(\Nt))_{0\leq t \leq T},(\mathbf{Z}^N(\Nt))_{0\leq t \leq T}\big)\xrightarrow{N\rightarrow \infty} 0 \quad \text{in probability,}	
\end{equation*}
concluding the second step of the proof.
\vskip 11pt
\noindent \textbf{Step 3.}
The idea is to prove the convergence of the generator of $(\mathbf{V}^N(\lfloor Nt\rfloor))_{t \geq 0}$ to that of $(\mathbf{X}(t))_{t \geq 0}$.
Since in Proposition \ref{prop:Feller} we prove that the limiting process is Feller continuous, weak convergence of $(\mathbf{V}^N(\lfloor Nt\rfloor))_{0 \leq t \leq T}$ to $(\mathbf{X}(t))_{0 \leq t \leq T}$ follows by \cite[Theorem 19.25 and 19.28]{Kallenberg}. The generator of \eqref{eq:drastic-diffusion}, for any $f \in C^3([0,1]^\demes)$, can be written as

\begin{equation}\label{eq:drastic-frequency-generator}
\begin{split}
\mathcal{A}f(\mathbf{x})={}&  \sum_{i=1}^\demes \left(\sum_{j\neq i}\frac{c_{ji}}{\omega_i}(x_j-x_i)\frac{\partial f}{\partial x_i}(\mathbf{x})+\frac{1}{2\omega_i}x_i(1-x_i)\frac{\partial f^2}{\partial x_i^2}(\mathbf{x})\right)\\
&+\sum_{i=1}^\demes \frac{\gamma}{\demes}\sum_{\tau\geq 1}L(\tau)\sum_{k=0}^{\size(\tau)}\P\left(\tilde{X}^{\i,x}(\tau)=\frac{k}{\size(\tau)}\right)\\&\qquad\times \left(f\left(x_1,\ldots,x_{i-1},\frac{k}{\size(\tau)},x_{i+1},\ldots,x_\demes\right)-f(x_1,\ldots,x_\demes)\right),
\end{split}
\end{equation}
where 
\begin{equation}\label{eq:drastic-X-tilda}
\tilde{X}^{\i,\mathbf{x}}(\tau):=\frac{1}{\size(\tau)} \sum_{j=1}^{\demes}\sum_{m=1}^{\infty} a^\i_{jm}(\tau)B_m^{x_j},	
\end{equation}
and the  $B_m^{x_j}$'s are independent Bernoulli random variables with parameter $x_j$.
The generator of $\mathbf{V}^N$ is, for any $f \in C^3([0,1]^\demes)$,
\begin{equation*}
\begin{split}
\mathcal{A}^Nf(\mathbf{x}):={}&N\E_{\mathbf{x}}\Big[f(\mathbf{V}^N(1))-f(\mathbf{x})\Big]=N\left(1-\frac{\gamma}{N}\right)\mathcal{L}^N(\mathbf{x})+N\frac{\gamma}{N}\mathcal{K}^N(\mathbf{x}),
\end{split}
\end{equation*}
where $\E_{\mathbf{x}}[\cdot]=\E[\cdot|\mathbf{V}^{N}(0)=\mathbf{x}]$, and

\begin{equation*}
\begin{split}
\mathcal{L}^Nf(\mathbf{x}):=\E_{\mathbf{x}}\Big[f(\mathbf{V}^N_1)-f(\mathbf{x})|s_1^N>1\Big]
\end{split}
\end{equation*}
is the Wright--Fisher part of the generator, i.e.\ if there is no bottleneck in the first time step, and
\begin{equation*}
\begin{split}
\mathcal{K}^Nf(\mathbf{x})&:=\E_{\mathbf{x}}\Big[f(\mathbf{V}^N_1)-f(\mathbf{x})|s_1^N=1\Big]\\&=\sum_{i=1}^\demes \frac{\gamma}{\demes}\sum_{\tau \geq 1}\P(l_1^N=\tau)\sum_{k=0}^{\size(\tau)}\P\left(\tilde{X}^{\i,x}(\tau)=\frac{k}{\size(\tau)}\right)\\&\qquad \times\left(f\left(x_1,\ldots,x_{i-1},\frac{k}{\size(\tau)},x_{i+1},\ldots,x_\demes\right)-f(x_1,\ldots,x_\demes)\right).
\end{split}
\end{equation*}
is the jump part, i.e.\ if a bottleneck occurs.

By \cite[Theorem 3.6]{Eth12}, we get that $N\mathcal{L}^Nf \xrightarrow{N\rightarrow \infty} \mathcal{L}f$ uniformly for every $f \in C^3([0,1]^\demes)$, where
\begin{equation*}
\mathcal{L}f(\mathbf{x}):= \sum_{i=1}^\demes \left(\sum_{j\neq i}\frac{c_{ji}}{\omega_i}(x_j-x_i)\frac{\partial f}{\partial x_i}(\mathbf{x})+\frac{1}{2\omega_i}x_i(1-x_i)\frac{\partial f^2}{\partial x_i^2}(\mathbf{x})\right).
\end{equation*}
The uniform convergence of the jump part holds since the distribution of $l_1^N$ converges to $L$.
\end{proof}

\subsection{Duality}
We now focus on the backward in time dynamics and introduce the block-counting process of a multi-type $\Xi$-coalescent presenting simultaneous multiple mergers and migrations. We then prove that this process is the moment dual of the frequency process \eqref{eq:drastic-diffusion}. Throughout this section we will work with the limiting model where the population size $N$ is equal to $\infty$.

We want to trace back the ancestral lines of a sample of $\mathbf{n}=(n_1,\ldots,n_\demes)$ individuals, where $n_i$ is the number of individuals sampled from deme $i$.

We begin by describing a drastic bottleneck backward in time.
We denote by $\xi^\i(\tau):=\size(1)+c_i\cdot (\tau-1)$ the total number of families arising during a drastic bottleneck of length $\tau$ affecting deme $i$, and by $\mathbf{p}^\i(\tau)=(p^\i_{jm}(\tau))_{1\leq j \leq \demes,\, 1\leq m \leq \xi^\i(\tau)}$ their respective weights as defined in \eqref{eq:relative-size}.	 
Looking backward in time, if we have $n_i$ lines ``before'' the event, these choose which family $jm$ to go into with probability $p^\i_{jm}(\tau)$. After the bottleneck, families $jm$ for $m=1,\ldots,\xi^\i(\tau)$ migrate to deme $j$. In other words, we allocate $n_i$ balls to $\demes \times \xi^\i(\tau)$ labelled and weighted boxes with weights $\mathbf{p}^\i(\tau)$. We define the following random variables
\begin{equation*}
\tilde{N}_j^{\i,\mathbf{n}}(\tau)= \sum_{m=1}^{\xi^\i(\tau)} \1(\{\text{box $jm$ is non-empty}\}).
\end{equation*}
Consider the continuous-time Markov process $(\mathbf{N}(t))_{t \geq 0}=(N_1(t),\ldots,N_\demes(t))_{t \geq 0}$ on $\N^D$, characterised by the following transitions from a state $\mathbf{n}=(n_1,\ldots ,n_\demes)\in\N^D$:
\begin{itemize}
\item $\mathbf{n}\mapsto \mathbf{n}-\mathbf{e}_i$ with rate
\begin{equation*}
	\frac{1}{\omega_i}\binom{n_i}{2}+\frac{\gamma}{\demes}\sum\limits_{\tau\geq 1}L(\tau)\P\big(\tilde{N}_i^{\i,n}(\tau)=n_i-1,\,\tilde{N}_j^{\i,n}(\tau)=0 \,\,\forall j\neq i\big),
\end{equation*}
\item $\mathbf{n}\mapsto \mathbf{n}-\mathbf{e}_i+\mathbf{e}_j$ with rate
\begin{equation*}
	\frac{c_{ji}}{\omega_i}n_i+\frac{\gamma}{\demes}\sum\limits_{\tau\geq 1}L(\tau)\P\big(\tilde{N}_i^{\i,n}(\tau)=n_i-1, \tilde{N}_j^{\i,n}(\tau)=1\big),
\end{equation*}
\item $\mathbf{n}\mapsto \mathbf{n}-(n_i-a)\mathbf{e}_i+\sum_{j\neq i}b_j\mathbf{e}_j$ with rate
\begin{equation*}
	\frac{\gamma}{\demes}\sum\limits_{\tau\geq 1}L(\tau)\P\big(\tilde{N}_i^{\i,n}(\tau)=a,\,\tilde{N}_j^{\i,n}(\tau)=b_j \,\,\forall j\neq i\big),
\end{equation*}
\end{itemize}
where $\mathbf{e}_j$ is the $j$-th canonical unit vector.

\begin{remark}
This is the candidate process counting the number of ancestral lineages of a sample taken from the present generation. The first two transitions represent binary mergers (in one deme) and single migrations, respectively. These are all the possible transitions of a structured Kingman coalescent, \cite{Hud90,Herbots_1998}. The occurrence of bottlenecks, however, gives rise to one other possible transition: simultaneous multiple mergers with simultaneous multiple migrations.
Indeed, since by construction simultaneous multiple mergers (see Figure \ref{fig:drastic-frequency}) have positive probability, we can see $(\mathbf{N}(t))_{t\geq 0}$ as the block-counting process not of a $\Lambda$-coalescent but of a multi-type $\Xi$-coalescent.
Due to the fact that bottlenecks affect one deme at a time, these multiple mergers happen in the affected deme only, and the simultaneous migrations are only from the affected deme to the others.
\end{remark}
In the following theorem, we prove that $(\mathbf{N}_t)_{t \geq 0}$ satisfies a moment duality relation with the frequency process $(\mathbf{X}_t)_{t \geq 0}$.

\begin{theorem}\label{thm:drastic-frequency}
For every $\mathbf{x}\in[0,1]^\demes$, $\mathbf{n}\in \N_0^\demes$, we have
\begin{equation*}
\E_{\mathbf{x}}\Bigg[\prod_{i=1}^\demes X_i(t)^{n_i}\Bigg]=\E_{\mathbf{n}}\Bigg[\prod_{i=1}^\demes x_i^{N_i(t)}\Bigg].	
\end{equation*}
\end{theorem}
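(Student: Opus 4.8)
The plan is to establish the moment duality via the standard generator-duality criterion, using the monomial duality function $H(\mathbf{x},\mathbf{n}):=\prod_{i=1}^\demes x_i^{n_i}$, which is bounded on $[0,1]^\demes\times\N_0^\demes$ and, for fixed $\mathbf{n}$, lies in $C^3([0,1]^\demes)$. Writing $\mathcal{A}$ for the frequency generator \eqref{eq:drastic-frequency-generator} acting on the $\mathbf{x}$-variable and $\mathcal{G}$ for the generator of $(\mathbf{N}(t))_{t\ge0}$ acting on the $\mathbf{n}$-variable, I would first verify the pointwise identity $\mathcal{A}H(\cdot,\mathbf{n})(\mathbf{x})=\mathcal{G}H(\mathbf{x},\cdot)(\mathbf{n})$ for all $\mathbf{x}\in[0,1]^\demes$ and $\mathbf{n}\in\N_0^\demes$, and then deduce the duality from it. Note that the total block count $\sum_i N_i(t)$ is non-increasing (migration leaves it fixed, every merger lowers it, and a simultaneous event produces at most $n_i$ occupied boxes out of $n_i$ balls), so $(\mathbf{N}(t))$ never leaves the finite set $\{\mathbf{m}:\sum_i m_i\le\sum_i n_i\}$; in particular $\mathcal{G}$ is a bounded operator and all analytic manipulations below are justified by the boundedness of $H$ on the compact state space.

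For the drift and diffusion (``Kingman'') part the verification is routine monomial calculus: since $\partial_{x_i}H=n_i H/x_i$ and $\partial_{x_i}^2H=n_i(n_i-1)H/x_i^2$, the migration drift contributes $\sum_{j\ne i}\tfrac{c_{ji}}{\omega_i}n_i\big(H(\mathbf{x},\mathbf{n}-\mathbf{e}_i+\mathbf{e}_j)-H(\mathbf{x},\mathbf{n})\big)$ and the Wright--Fisher diffusion contributes $\tfrac{1}{\omega_i}\binom{n_i}{2}\big(H(\mathbf{x},\mathbf{n}-\mathbf{e}_i)-H(\mathbf{x},\mathbf{n})\big)$. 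These match exactly the first summands of the single-migration and binary-merger rates in the definition of $(\mathbf{N}(t))$, which is the classical structured Wright--Fisher/Kingman duality.

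The heart of the matter is the bottleneck (jump) part, and this is where I expect the main difficulty. Inserting $f=H(\cdot,\mathbf{n})$ into the jump term of \eqref{eq:drastic-frequency-generator} collapses the sum over $k$ to the moment $\E\big[(\tilde X^{\i,\mathbf{x}}(\tau))^{n_i}\big]\prod_{l\ne i}x_l^{n_l}$. The key combinatorial identity is the balls-in-boxes representation
\begin{equation*}
\E\big[(\tilde X^{\i,\mathbf{x}}(\tau))^{n_i}\big]=\E\Big[\prod_{j=1}^\demes x_j^{\tilde N_j^{\i,\mathbf{n}}(\tau)}\Big],
\end{equation*}
which I would prove by expanding the $n_i$-th power of \eqref{eq:drastic-X-tilda}, using $B^2=B$ and the independence of the Bernoulli variables to replace the product over the sampled boxes by one factor $x_j$ per occupied box of origin $j$, and recognising the weights $\prod p^{\i}_{jm}(\tau)$ from \eqref{eq:relative-size} as the law of allocating $n_i$ i.i.d.\ balls. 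Conditioning the right-hand side on the occupancy numbers $\tilde N_i^{\i,\mathbf{n}}(\tau)=a$, $\tilde N_j^{\i,\mathbf{n}}(\tau)=b_j$ turns it into $\sum_{a,(b_j)}\P(\cdots)\,H(\mathbf{x},\mathbf{m})$ with $\mathbf{m}=\mathbf{n}-(n_i-a)\mathbf{e}_i+\sum_{j\ne i}b_j\mathbf{e}_j$, precisely the target states of the three coalescent transitions. Summing the bottleneck parts of all three transition rates therefore reproduces the forward jump contribution term by term; the only care needed is to check that the three bullet points in the definition of $(\mathbf{N}(t))$ enumerate every occupancy outcome $(a,(b_j))$ exactly once, so that no rate is double-counted.

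Having established $\mathcal{A}H(\cdot,\mathbf{n})=\mathcal{G}H(\mathbf{x},\cdot)$, I would conclude by the usual interpolation argument: running the two processes independently and setting $F(s):=\E_{\mathbf{x},\mathbf{n}}[H(\mathbf{X}(s),\mathbf{N}(t-s))]$ for $s\in[0,t]$, the generator identity gives $F'(s)=\E[\mathcal{A}H(\cdot,\mathbf{N}(t-s))(\mathbf{X}(s))]-\E[\mathcal{G}H(\mathbf{X}(s),\cdot)(\mathbf{N}(t-s))]=0$, so that $F(0)=F(t)$. Since $\mathbf{X}(0)=\mathbf{x}$ and $\mathbf{N}(0)=\mathbf{n}$, this reads $\E_{\mathbf{n}}[H(\mathbf{x},\mathbf{N}(t))]=\E_{\mathbf{x}}[H(\mathbf{X}(t),\mathbf{n})]$, which is the claimed identity. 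The differentiation under the expectation is legitimate because $H$ and $\mathcal{A}H$ are bounded on the compact state space and $(\mathbf{N}(t))$ evolves on a finite state space with bounded jump rates.
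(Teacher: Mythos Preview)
Your proposal is correct and follows essentially the same generator-duality strategy as the paper: verify $\mathcal{A}H(\cdot,\mathbf{n})(\mathbf{x})=\mathcal{G}H(\mathbf{x},\cdot)(\mathbf{n})$ for the monomial $H(\mathbf{x},\mathbf{n})=\prod_i x_i^{n_i}$, treating the Wright--Fisher/Kingman part by the standard monomial calculation and the bottleneck jump part via the identity $\E\big[(\tilde X^{\i,\mathbf{x}}(\tau))^{n_i}\big]=\E\big[\prod_j x_j^{\tilde N_j^{\i,\mathbf{n}}(\tau)}\big]$.

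The only noteworthy differences are in presentation rather than substance. For the jump identity, the paper invokes the \emph{sampling duality} interpretation of M\"ohle: both sides of \eqref{eq:generator-duality} are read as the probability that a sample of $\mathbf{n}$ individuals drawn just after the bottleneck is entirely of type $a$, computed either one generation back (left-hand side) or all the way back to the start of the bottleneck (right-hand side). Your direct expansion of the $n_i$-th power using $B^2=B$ and independence is the algebraic unpacking of that same probabilistic statement, and the identification of the weights $\prod_k p^{\i}_{j_k m_k}(\tau)$ with the allocation law is exactly right. For the final step, the paper appeals to the Feller continuity of $(\mathbf{X}(t))_{t\ge0}$ (Proposition~\ref{prop:Feller}) and cites \cite[Proposition~1.2]{JK14}, whereas you spell out the interpolation argument $F(s)=\E[H(\mathbf{X}(s),\mathbf{N}(t-s))]$ directly; these are of course equivalent, and your remark that $\sum_i N_i(t)$ is non-increasing so that $(\mathbf{N}(t))$ lives on a finite set supplies the boundedness needed to justify the interchange.
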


\begin{proof}
For any function $f:\N_0^\demes \rightarrow \R$, the infinitesimal generator of the block-counting process $\mathbf{N}$ is given by
\begin{equation*}
\begin{split}\label{eq:drastic-coalescent-generator}
\mathcal{G}f(\mathbf{n}) := \sum_{i=1}^\demes\Bigg[&\left(\frac{1}{\omega_i}\binom{n_i}{2}(f(\mathbf{n}-\mathbf{e}_i)-f(\mathbf{n}))+\frac{c_{ji}}{\omega_i}n_i(f(\mathbf{n}-\mathbf{e}_i+\mathbf{e}_j)-f(\mathbf{n}))\right)\\
&+\frac{\gamma}{\demes}\sum_{\tau\geq 1}L(\tau)\sum_{\substack{a,(b_j)\,:\\ a+\sum_{j\neq i} b_j\leq n_i}}\P(\tilde{N}_i^{\i,n}(\tau)=a,\,\tilde{N}_j^{\i,n}(\tau)=b_j \,\,\forall j\neq i)\\
&\phantom{+\frac{\gamma}{\demes}\sum_{\tau\geq 1}L(\tau)\sum_{a+\sum_{j\neq i} b_j\leq n_i}}\times \Big(f\big(\mathbf{n}-n_i\mathbf{e}_i+a\mathbf{e}_i+\sum_{j\neq i}b_j\mathbf{e}_j\big)-f(\mathbf{n})\Big)\Bigg].
\end{split}
\end{equation*}
Let $\mathcal{A}$ be the generator of the allele-frequency process $(\mathbf{X}_t)_{t\geq 0}$, as defined in \eqref{eq:drastic-frequency-generator}.
Our aim is to prove that, for $h:[0,1]^\demes \times\N^\demes\rightarrow\R$, defined as $h(\mathbf{x},\mathbf{n})=\prod_{i=1}^\demes x_i^{n_i}$,
\begin{equation*}
\mathcal{A}\,h(\mathbf{x},\mathbf{n})=\mathcal{G}\,h(\mathbf{x},\mathbf{n}),
\end{equation*}
when $\mathcal{A}$ acts on the first argument and $\mathcal{G}$ on the second. Since $(\mathbf{X}(t))_{t\geq 0}$ is Feller continuous (see Proposition \ref{prop:Feller}), duality then follows from \cite[Proposition 1.2]{JK14}.

If we consider the \emph{Wright--Fisher component} of $\mathcal{A}$,
\begin{equation*}
\mathcal{A^{\text{WF}}}f(\mathbf{x})=\sum_{i=1}^\demes \left(\frac{1}{2\omega_i}x_i(1-x_i)\frac{\partial f^2}{\partial x_i^2}(\mathbf{x})+\sum_{j\neq i}\frac{c_{ji}}{\omega_i}(x_j-x_i)\frac{\partial f}{\partial x_i}(\mathbf{x})\right),\end{equation*}
and the \emph{Kingman component} of $\mathcal{G}$
\begin{equation*}
\begin{split}
\mathcal{G}^{\text{K}}f(\mathbf{n})=\sum_{i=1}^\demes\left[\frac{1}{\omega_i}\binom{n_i}{2}(f(\mathbf{n}-\mathbf{e}_i)-f(\mathbf{n}))+\frac{c_{ji}}{\omega_i}n_i(f(\mathbf{n}-\mathbf{e}_i+\mathbf{e}_j)-f(\mathbf{n}))\right],
\end{split}
\end{equation*}
we have that 
\begin{equation*}
\begin{split}
&\mathcal{A^{\text{WF}}}\,h(\mathbf{x},\mathbf{n})\\
&= \sum_{i=1}^\demes \left(\frac{1}{2\omega_i}x_i(1-x_i)n_i(n_i-1)x_i^{n_i-2}\prod_{j\neq i }x_j^{n_j}+\sum_{j \neq i}\frac{c_{ji}}{\omega_i}(x_j-x_i)n_ix^{n_i-1}\prod_{k\neq i}x_k^{n_k}\right)\\
&=\sum_{i=1}^\demes \left(\frac{1}{\omega_i}\binom{n_i}{2} \Big(x_i^{n_i-1}\prod_{j\neq i }x_j^{n_j}-\prod_{j=1}^\demes x_j^{n_j} \Big) + \sum_{j \neq i}\frac{c_{ji}}{\omega_i}n_i\Big(x_i^{n_i-1}x_j^{n_j+1}\prod_{k\neq i,j }x_k^{n_k}-\prod_{j=1}^\demes x_j^{n_j} \Big)\right)\\
&=\mathcal{G}^{\text{K}}\,h(\mathbf{x},\mathbf{n}).
\end{split}
\end{equation*}
Hence, we can now focus on the jump parts. In particular, we want to prove that for any $i \in {1,\ldots,\demes}$ and $\tau \in \N_0$, we have
\begin{equation}\label{eq:generator-duality}
\begin{split}
&\sum_{k=0}^{\size(\tau)}\P\left(\tilde{X}^{\i,x}(\tau)=\frac{k}{\size(\tau)}\right)h\left(\mathbf{x}+\Big(\frac{k}{\size(\tau)}-x_i\Big)\mathbf{e}_i,\mathbf{n}\right)\\&=	\sum_{\substack{a,(b_j)\,:\\ a+\sum_{j\neq i} b_j\leq n_i}}\P(\tilde{N}_i^{\i,n}(\tau)=a,\,\tilde{N}_j^{\i,n}(\tau)=b_j \,\,\forall j\neq i)\\&\quad \quad \quad \times h\big(\mathbf{x},\mathbf{n}-n_i\mathbf{e}_i+a\mathbf{e}_i+\sum_{j\neq i}b_j\mathbf{e}_j\big),
\end{split}
\end{equation}
for all $\mathbf{x}\in [0,1]^\demes$ and $\mathbf{n}\in \N_0^\demes$, where $\tilde{X}^{\i,\mathbf{x}}(\tau)$ is defined in \eqref{eq:drastic-X-tilda}. To do that, we use the method of \emph{sampling duality}, \cite{Mohle99}. In particular, we show that both sides express the probability of sampling only individuals of type $a$ after (forward in time) a bottleneck in the pre-limiting model. Assume that a bottleneck of length $\tau$ takes place, and that there are $\mathbf{n}=(n_1,\ldots,n_i,\ldots,n_\demes)$ lines right before (backward in time) the bottleneck. Now we compute the probability that all these individuals are of type $a$. Firstly, observe that the sample is made of just type-$a$ individuals if and only if they all chose a type-$a$ parent in the first (backward in time) bottleneck generation. As the frequency of type $a$ in the previous generation is given by $(x_1,\ldots ,x_{i-1},\tilde{X}^{\i,\mathbf{x}}(\tau),x_{i+1},\ldots ,x_\demes)$ and the choices of parents are independent, this happens with probability $\big(\tilde{X}^{\i,\mathbf{x}}(\tau)\big)^{n_i}\prod_{j\neq i} x_j^{n_j} $; its expected value is the left-hand side of \eqref{eq:generator-duality}. 
A second way to compute the probability of this event is to go back until the last step of the bottleneck, in which our sample has $\mathbf{n}-n_i\mathbf{e}_i+\tilde{N}_i^{\i,\mathbf{n}}(\tau)\mathbf{e}_i+\sum_{j\neq i}\tilde{N}_j^{\i,\mathbf{n}}(\tau)\mathbf{e}_j$ ancestors. Therefore, the initial individuals are all of type $a$ if and only if all these ancestors have type-$a$ parents. This event has probability $x_i^{\tilde{N}^{\i,\mathbf{n}}_i(\tau)}\prod_{j\neq i}x_j^{n_j+\tilde{N}^{\i,\mathbf{n}}_j(\tau)}$; its expected value is the right-hand side of \eqref{eq:generator-duality}, and this concludes the proof.
\end{proof}

\section{Soft bottlenecks}\label{section:soft} 
In this section we define a model presenting \emph{soft} bottlenecks, i.e.\ during which the population size grows to infinity but is $o(N)$. As in the previous section, we want the bottlenecks, once time is rescaled by a factor $N$, to have negligible duration so that their effect will be that of an instantaneous jump. Hence we assume that the bottlenecks' lengths grow at the same speed as their sizes. This is the right scaling to consider because, if the duration were shorter, bottlenecks would have no effect on the genealogy that would simply look like a time-rescaled structured Kingman coalescent. If, on the other hand, we were to consider durations of higher order, the effect would be too strong, and, for any given population sample, after a bottleneck, there would be a.s.\ no genealogical line left in the affected island. 
The genealogy that we instead observe (Theorem \ref{thm:soft-convergence-coalescent}) is a coalescent with simultaneous multiple mergers and migrations.

 We consider bottlenecks determined by parameters satisfying the following assumptions: 
 \begin{enumerate}[label=\textbf{S.\arabic*}]
\item \label{senumerate:1} A deterministic sequence $\{l_k^N\}_{k\in \N}$ with $l_k^N\rightarrow \infty$ but $l_k^N/N \rightarrow 0$ as $N\rightarrow \infty $ for every $k\in \N$.
\item \label{senumerate:2} A sequence $\{s_k^N\}_{k\in \N}$ of i.i.d.\ random variables following a geometric distribution with parameter $\gamma/N$ for some $\gamma>0$.
\item \label{senumerate:3} A sequence of functions $\presize_k:[0,l^N]\rightarrow \N$ such that, for some continuous function \mbox{$\varphi:[0,1]\rightarrow (0,\infty)$}, for all $k\in \N$,
\begin{equation}\label{eq:presize_convergence}
\lim_{N \to \infty}\frac{\presize_k(\lfloor l_k^Nt\rfloor)}{l_k^N}=\varphi(t) \hspace{2em} \forall t\in[0,1].
\end{equation}
We define the \emph{effective length} of the bottlenecks as
\begin{equation}\label{eq:lambda}
\efflen:=\int_0^{1}\frac{1}{\varphi(t)}dt>0.
\end{equation}
\end{enumerate}

\begin{remark}
Using Conditions \eqref{eq:presize_convergence} and \eqref{eq:lambda}, we will prove that the effect of a soft bottleneck on the genealogy of a sample is that of a Kingman coalescent with emigration running in the affected deme for $\efflen$ units of time, while the other demes remain unaltered. Since the bottlenecks' lengths are negligible relative to the diffusion time scale $N$, we observe all the resulting events simultaneously.
\end{remark}
\subsection{Coalescent process}
Unlike the previous case (drastic bottleneck), we start by analysing the (backward in time) model for the genealogy of a sample as the
population grows to infinity. This will then provide a convenient description
of the (forwards in time) impact of a bottleneck. In this subsection we
measure time backwards; that is by generation $g\in \N_0$ we mean $g$ steps back in the past starting from the present generation $0$. 

For any $n\in \N$ let $\mathcal{P}_n$ be the set of partitions of $\{1,\ldots,n\}$, and for $\pi \in \mathcal{P}_n$ let $\abs{\pi}$ be the number of blocks in the partition $\pi$. To incorporate the spatial structure of the model, we introduce the following space of \emph{marked} partitions 
\begin{equation*}
	\mathcal{P}_n^{\demes}:=\{\xi=(\pi,\vec{u})\,|\, \pi\in \mathcal{P}_n, \vec{u}\in\{1,\ldots,\demes\}^{|\pi|}\}.
\end{equation*}
For example, for $k=8$ and $D=3$, a marked partition in $\mathcal{P}_8^{3}$ could be \[\xi=\{\{1\}^2,\{3\}^1,\{4\}^1,\{2,5,7\}^3,\{6,8\}^1\},\]
where the superscripts denote the marks. 
Fix $N\in \N$ and consider the Wright--Fisher model with soft bottlenecks characterised by \ref{senumerate:1}, \ref{senumerate:2} and \ref{senumerate:3}. We start by taking a sample of $n\ll N$ individuals in the present generation and we describe it as $\xi_0=(\pi_0,\underline{u}_0)\in\mathcal{P}_n^D$ such that 
\begin{equation*}
\pi_0=\{\{1\},\{2\},\ldots ,\{n\}\}.
\end{equation*}
From this we construct the \emph{ancestral line process} $\{\Pi^N(g)\}_{g \in \N_0 }$ as follows. The partitions evolve according to the following equivalence relation
\begin{equation*}
i\sim_g	j \quad \quad \text{if individuals $i$ and $j$ find a common ancestor within $g$ generations in the past.}
\end{equation*}
To each block of the partition we assign a mark that denotes the location of the corresponding ancestor alive at time $g$ in the past. The possible transitions of this process are mergers of blocks whenever some individuals find a common ancestor or changes of marks due to migrations. The aim of this section is to investigate the limit of this partition-valued process as $N \to \infty$.

We introduce some notation to describe the possible transitions of the genealogy of a sample, we write
 \begin{equation*}
 \begin{split}
 &\xi \succi \eta	\hspace{0.65cm} \text{if the partition } \eta \text{ is constructed by merging two $i$-blocks of } \xi, \\
 &\xi \simi \eta	\hspace{0.4cm} \text{if the partition } \eta \text{ is constructed by relabelling exactly one $i$-block of } \xi \text{ as a $j$-block},\\
  &\xi \succsimi \eta	\hspace{0.65cm} \text{if the partition } \eta \text{ is constructed from } \xi \text{ by merging $i$-blocks and relabelling}\\
  & \hspace{1.59cm} \text{$i$-blocks only}.\\
   \end{split}
 \end{equation*}
The first transition is a binary merger within a particular deme, while the second indicates a single migration from deme $i$ to deme $j$. The last describes multiple mergers and migrations involving deme $i$ only, i.e.\ ancestral lines only merge in that subpopulation and there are migrations starting only from there. For example, for $k=8$ and $D=3$, if \[\xi=\{\{1\}^2,\{3\}^1,\{4\}^1,\{2,5,7\}^3,\{6,8\}^1\}\text{ and } \eta=\{\{1\}^2,\{3,4\}^1,\{2,5,7\}^3,\{6,8\}^2\},\] we have $\xi \Succsim{1} \eta$. We will prove that these are the only three transitions that occur in the stochastic process describing the limiting genealogy. 

We now introduce a coalescent process which we will need in the definition of the candidate limit of the genealogy of a sample.
\begin{definition}\label{def:bottleneck-coalescents}
Fix $n \geq 1$ and $i \in \{1,\ldots ,\demes\}$, let $(\pii(t))_{t \geq 0}$ be a Markov process on $\mathcal{P}_n^{\demes}$ with rate matrix $\tilde{Q}^{\text{\tiny{($i$)}}}=\{\tilde{q}_{\xi\eta}^{\text{\tiny{($i$)}}}\}$ such that

\begin{equation*}\arraycolsep=3pt\def\arraystretch{1.9} \tilde{q}_{\xi\eta}^{\text{\tiny{($i$)}}} =   \left\{ \begin{array}{lll}
         1& \text{if} \hspace{0.2cm}&  \xi \succi \eta, \\
        c_{ji}& \text{if} \hspace{0.2cm}& \xi \simi \eta.
            \end{array} \right. 
        \end{equation*}
\end{definition}
The processes $(\pii(t))_{t \geq 0}$ is a structured Kingman coalescent presenting binary mergers only in subpopulation $i$ and migration only from subpopulation $i$. It describes the limiting genealogy of a soft bottleneck event on island $i$ on the $l^N$ time scale, where $l^N$ is the duration of the bottleneck itself. As this duration is much smaller than the final time rescaling factor $N$, a bottleneck will be instantaneous on the time scale of $N$ generations as $N \to \infty$. We will see that its impact on the whole genealogy will consist of running $(\pii(t))_{t \geq 0}$ for $\efflen$ units of time, and making the genealogy jump from the value before the bottleneck to $\pii(\efflen)$. This intuition leads to the following definition of (what we will prove to be) the coalescent describing the genealogy of our process.
\begin{definition}\label{def:soft-coalescent}
Fix $n \geq 1$ and let $(\pii(t))_{t \geq 0}$ be as in Definition \ref{def:bottleneck-coalescents}. We define the \emph{soft-bottleneck structured} $n$-coalescent 
as the Markov process $(\Pi(t))_{t\geq 0}$ on the set $\mathcal{P}_n^{\demes}$ with rate matrix $Q=\{q_{\xi\eta}\}$ such that
 \begin{equation*}\arraycolsep=3pt\def\arraystretch{1.9} q_{\xi\eta} =   \left\{ \begin{array}{llllll}
         \frac{1}{\omega_i}+\frac{\gamma}{\demes}\P(\pii(\efflen)=\eta\,|\,\pii(0)=\xi)& \text{if} \hspace{0.2cm}& \xi \succi \eta, \\
        \frac{c_{ji}}{\omega_i}+\frac{\gamma}{\demes}\P(\pii(\efflen)=\eta\,|\,\pii(0)=\xi)& \text{if} \hspace{0.2cm}& \xi \simi \eta,\\
        \frac{\gamma}{\demes}\P(\pii(\efflen)=\eta\,|\,\pii(0)=\xi)& \text{if} \hspace{0.2cm}& \xi \succsimi \eta.
            \end{array} \right. 
        \end{equation*}
\end{definition}
Similarly to the frequency process in the previous section, $\{\Pi^N(g)\}_{g \in \N_0 }$ does not satisfy the Markov property due to the presence of bottlenecks and we use the topology induced by the metric $d_\lambda$ defined in Definition \ref{def:dmu} to obtain the convergence to the limiting coalescent $(\Pi(t))_{t\geq 0}$.
We can indeed generalise Definition \ref{def:dmu} to the space $\mathcal{D}([0,T];\mathcal{P}_n^\demes)$ of $\mathcal{P}_n^{\demes}$-valued c\`adl\`ag functions on $[0,T]$.
Firstly, we endow the space $\mathcal{P}_n^\demes$ with the discrete metric $\varrho(\xi,\eta):=1-\delta_{\xi,\eta}$ for all $\xi,\eta \in \mathcal{P}_n^\demes$, where $\delta_{\xi,\eta}$ denotes the Kronecker symbol; $\mathcal{P}_n^\demes$ is then a complete and separable metric space.
For any $x_1,x_2\in \mathcal{D}([0,T];\mathcal{P}_n^\demes)$, let $\overline{d}_{\lambda}$ be the semi-metric
\begin{equation}\label{eq:overline-d-lambda}
\begin{split}
&\overline{d}_{\lambda}(x_1,x_2) := \\ &\inf_{A\in \mathcal{I},f\in \mathcal{F}}\left \{\int_0^\infty e^{-u}d(x_1,x_2,f,A,u)du \,\lor \, \norm{\text{Id}-f}_{\infty}\, \lor \, \lambda([0,T]\setminus A) \, \lor \, \varrho\big(x_1(T),x_2(T)\big)\right \},	
\end{split}
\end{equation}
where $\norm{.}_{\infty}$ is the uniform norm and 
\begin{equation*}
d(x_1,x_2,f,A,u)=\sup_{t\in A}\varrho\big(x_1(t \land u),x_2(f(t)\land u)\big).
\end{equation*}
The metric $d_\lambda$ can be then defined as 
\begin{equation*}
d_{\lambda}(x_1,x_2)=\frac{\overline{d}_{\lambda}(x_1,x_2)+\overline{d}_{\lambda}(x_2,x_1)}{2},
\end{equation*}
for any $x_1,x_2\in \mathcal{D}([0,T];\mathcal{P}_n^\demes)$.
We are now ready to state the convergence theorem.
\begin{theorem}\label{thm:soft-convergence-coalescent}
Fix $N\in \N$, $\gamma>0$, and $n \geq 1$, fix also a sequence $\{l_k^N\}_{k\in \N}$, a sequence $\{s_k^N\}_{k\in \N}$ and a function $\presize:\N \rightarrow \N$ satisfying \ref{senumerate:1}, \ref{senumerate:2}, \ref{senumerate:3} respectively. Let $\{\Pi^N(g)\}_{g\in \N_0}$ be the ancestral line process of the structured Wright--Fisher model with soft bottlenecks parametrised by $N$, $\gamma$, $n$, $\{l_k^N\}_{k\in \N}$, $\{s_k^N\}_{k\in \N}$ and $\{\presize_k\}_{k\in \N}$. Let $(\Pi(t))_{0 \leq t \leq T}$ be the process defined in Definition \ref{def:soft-coalescent} and assume that $\Pi^N(0)\xRightarrow[N\rightarrow \infty]{} \Pi(0)$ weakly. Then for any $T>0$
\begin{equation*}
(\Pi^N_{\lfloor Nt\rfloor})_{0 \leq t \leq T} \xRightarrow[N\rightarrow \infty]{d_{\lambda}} (\Pi(t))_{0 \leq t \leq T}\,,
\end{equation*}
where $\xRightarrow{d_{\lambda}}$ denotes the weak convergence in the topology induced by $d_{\lambda}$.
\end{theorem}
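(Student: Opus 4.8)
The plan is to reproduce the three-step architecture of the proof of Theorem~\ref{thm:drastic-diffusion}, with two simplifications and one new ingredient. Because the sample size $n$ is fixed, the state space $\mathcal P_n^\demes$ is \emph{finite}, so the candidate limit $(\Pi(t))$ is a Markov jump process with bounded rate matrix $Q$; in particular it is automatically Feller, and once the generator of the time-homogeneous approximation converges to $Q$, weak convergence of the paths follows from \cite[Theorems 19.25 and 19.28]{Kallenberg}, exactly as in the drastic case. The new ingredient, replacing the elementary family-size computation of the drastic model, is a separation-of-timescales analysis of the genealogy \emph{inside} a single soft bottleneck, which is where the effective length $\efflen$ enters.

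\textbf{Key lemma (a single soft bottleneck).} Fix a bottleneck of deterministic length $l^N$ affecting deme $i$, read backward in time on its own generation scale, and let $\xi\in\mathcal P_n^\demes$ be the configuration of lineages when the bottleneck is entered (backward in time). I would show that the configuration when it is exited converges in distribution, as $N\to\infty$, to $\pii(\efflen)$ started from $\xi$, i.e.\ its law converges to $\P(\pii(\efflen)=\cdot\mid\pii(0)=\xi)$. The one-generation backward transition matrix at internal generation $g$ is
\[
P^N_g=I+\frac{1}{\presize(g)}\,\tilde Q^{(i)}+E^N_g,\qquad \norm{E^N_g}=O\big(\presize(g)^{-2}\big),
\]
with $\tilde Q^{(i)}$ the rate matrix of Definition~\ref{def:bottleneck-coalescents}: a given pair of deme-$i$ lineages coalesces with probability $\presize(g)^{-1}(1+o(1))$, a given deme-$i$ lineage migrates (backward) to deme $j$ with probability $c_{ji}\presize(g)^{-1}(1+o(1))$, lineages outside deme $i$ experience no event with probability $1-o(1)$ (their demes have size $\Theta(N)$ while $l^N=o(N)$ by~\ref{senumerate:1}), and two or more events in one generation contribute only $O(\presize(g)^{-2})$. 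The whole-bottleneck transition is the ordered product $\prod_{g=1}^{l^N}P^N_g$. By~\eqref{eq:presize_convergence} one has $\presize(g)=l^N\varphi(g/l^N)(1+o(1))$ with $\varphi$ continuous and bounded away from $0$, so $\max_g\presize(g)^{-1}\to0$, the Riemann sum $\sum_{g=1}^{l^N}\presize(g)^{-1}\to\int_0^1\varphi(t)^{-1}\,dt=\efflen$ by~\eqref{eq:lambda}, and $\sum_{g=1}^{l^N}\norm{E^N_g}=O\big((l^N)^{-1}\big)\to0$. A standard product estimate (in the spirit of \cite{Mohle98}) then yields $\prod_{g=1}^{l^N}P^N_g\to\exp(\efflen\,\tilde Q^{(i)})$, whose entries are precisely $\P(\pii(\efflen)=\eta\mid\pii(0)=\xi)$.

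\textbf{Steps 1 and 2.} With the Key Lemma in hand, the remaining structure is parallel to the drastic proof. I would introduce the time-homogeneous Markov process $\Pi^N_V$ that collapses each bottleneck to its last generation, and the auxiliary process $\Pi^N_Z$ that is constant on each bottleneck until its last generation. Step~1 (that $\Pi^N$ and $\Pi^N_V$ see the same bottlenecks, i.e.\ $\P(\mathfrak B_N)\to0$) transfers verbatim, since~\ref{senumerate:2} coincides with~\ref{enumerate:2} and the argument uses only that $m^N_T$ is tight, converging to a $\mathrm{Poisson}(\gamma T)$ variable, together with $\delta^N/N\to0$, which here follows from $l^N_k/N\to0$. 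Step~2 then shows $d_\lambda(\Pi^N,\Pi^N_V)\to0$ in probability: taking $A$ to be the complement of the rescaled bottleneck intervals and $f^N$ the same piecewise-linear time change, the processes agree on $A$ after the time change so the $\int_0^\infty e^{-u}\,du$ term in~\eqref{eq:overline-d-lambda} vanishes, while $\lambda([0,T]\setminus A)\to0$, $\norm{\mathrm{Id}-f^N}_\infty=\delta^N/N\to0$, and $\varrho(\cdot(T),\cdot(T))=0$ on $\mathfrak B_N^c$.

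\textbf{Step 3 and the main obstacle.} Finally I would prove $\mathcal A^Nf(\xi):=N\E_\xi[f(\Pi^N_V(1))-f(\xi)]\to Qf(\xi)$. Splitting $\mathcal A^N=N(1-\gamma/N)\mathcal L^N+\gamma\mathcal K^N$, a direct computation for a single Wright--Fisher-with-migration generation shows $N\mathcal L^Nf$ converges to the structured Kingman generator (rate $1/\omega_i$ per pairwise merger of $i$-blocks and $c_{ji}/\omega_i$ per $i\to j$ migration, the probability of two or more events being $O(N^{-2})$), while the Key Lemma gives $\gamma\mathcal K^Nf(\xi)=\gamma\sum_{i=1}^\demes\demes^{-1}\E[f(\text{exit of a deme-}i\text{ bottleneck})-f(\xi)]\to\frac{\gamma}{\demes}\sum_{i}\sum_{\eta}\P(\pii(\efflen)=\eta\mid\xi)\big(f(\eta)-f(\xi)\big)$. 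Summing reproduces exactly the rate matrix $Q$ of Definition~\ref{def:soft-coalescent}, and on the finite state space pointwise convergence is uniform, closing the argument. The main obstacle is the Key Lemma: one must upgrade the merely pointwise hypothesis~\eqref{eq:presize_convergence} to the Riemann-sum convergence $\sum_g\presize(g)^{-1}\to\efflen$ (which requires a uniform-integrability argument, available because $\varphi$ is bounded below) and control the accumulated error $\sum_g\norm{E^N_g}$ in the non-commutative matrix product.
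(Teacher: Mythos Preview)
Your proposal is correct and follows essentially the same three-step architecture as the paper. The paper likewise introduces $V^N$ (your $\Pi^N_V$) and $Z^N$ (your $\Pi^N_Z$), proves $\P(\mathfrak B_N)\to0$, bounds the $d_\lambda$-distances via the same piecewise-linear time change, and closes with generator convergence on the finite state space.

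The one genuine methodological difference is your Key Lemma. The paper packages the single-bottleneck analysis as an appendix result (Proposition~\ref{prop:appendix-coalescent-convergence}) proved by space--time generator convergence: it extends the time-inhomogeneous chain $\{\prepii(g)\}$ to a homogeneous one by adjoining the time coordinate, Taylor-expands the generator, and reads off the limit $\tilde{\mathcal A}^{(i)}/\varphi(t)+\partial_t$, which identifies the limiting process as $\pii(F(t))$ with $F(t)=\int_0^t\varphi^{-1}$. Your route via the ordered matrix product $\prod_g P^N_g\to\exp(\efflen\,\tilde Q^{(i)})$ in the spirit of \cite{Mohle98} is equally valid and arguably more transparent about where the Riemann sum $\sum_g\presize(g)^{-1}\to\efflen$ enters; the paper's approach hides this inside the generator computation. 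Both approaches face the same ``main obstacle'' you flag, and neither treats the uniform-integrability step in full detail.

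One small imprecision to fix in Step~2: on $\mathfrak B_N^c$ the endpoint term $\varrho\big(\Pi^N(NT),\Pi^N_V(NT)\big)$ does \emph{not} vanish pointwise; rather, conditionally on $\mathfrak B_N^c$ the two processes can differ at time $NT$ only if an ordinary (non-bottleneck) coalescence or migration occurs in the $\delta^N$ extra generations, and since each such event has probability $O(N^{-1})$ while $\delta^N=o(N)$, this probability tends to zero. The paper handles this explicitly for the $V^N$ vs.\ $Z^N$ comparison; your compressed sentence conflates the two triangle-inequality legs ($\Pi^N$ vs.\ $Z^N$ using the set $A$, and $Z^N$ vs.\ $V^N$ using the time change $f^N$), but the underlying argument is the right one.
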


\begin{proof}[Proof of Theorem \ref{thm:soft-convergence-coalescent}]
As in the proof of Theorem \ref{thm:drastic-diffusion}, we introduce a time-homogeneous Markov process that we prove to be close, with respect to $d_\lambda$, to the ancestral line process and converging to $\Pi$.
 Let $k_g$ be the $g$-th generation that is not in the set $\cup_{k=1}^{\infty}\{\underline{t}^N_k+1,\ldots ,\overline{t}^N_k-1\}$, and define the process $\{V^N(g)\}_{g \in \N_0}$ as\begin{equation*}
V^N(g):=\Pi^N(k_g).
\end{equation*}
In other words, $V^N$ sees the bottleneck as lasting only one generation (the last one, backward in time). 
To prove that the distance between $V^N$ and $\Pi^N$ is small in probability, we first have to show the a.s.\ absence of bottlenecks in the proximity of the time horizon.

\noindent \textbf{Step 1.} Again, we proceed as in the first step of the proof of Theorem \ref{thm:drastic-diffusion}. Let $m^N_T$ be the index of the last bottleneck that ends before generation $NT-1$, i.e.\ \mbox{$m^N_T:=\sup\{k:\overline{t}^N_k < NT-1\}$}, and set $\delta^N:=\sum_{k=1}^{m^N_T}(l^N_k-1)$. We want to prove that a.s.\ there is no bottleneck in the time interval $[NT-1,NT+\delta^N]$ by showing that the probability of the complementary event \[\mathfrak{B}_N:=\big\{\omega \in \Omega\, :\, \exists k\in \N \text{ s.t.\ } \{NT-1,NT\ldots ,NT+\delta^N\} \cap \{\underline{t}^N_k+1,\underline{t}^N_k+2,\ldots ,\overline{t}^N_k\}\neq \varnothing  \big\},\] tends to zero as $N \to \infty$. 
We define
\[\mathfrak{B}_{N,k}:=\big\{\omega \in \Omega\, : \{NT-1,NT,\ldots ,NT+\delta^N\} \cap \{\underline{t}^N_k+1,\underline{t}^N_k+2,\ldots ,\overline{t}^N_k\}\neq \varnothing  \big\},\]
then we can follow the same computations as in the first step of proof of Theorem \ref{thm:drastic-diffusion} and get for any $\varepsilon>0$
\begin{align}
&\P\left(\mathfrak{B}_N\right)=\sum_{m\in \N}\sum_{k=2}^{NT-1}\P\left(\mathfrak{B}_N\,,\,m^N_T=m\,,\,\overline{t}^N_{m^T_N}=NT-k\right)\notag\\
&=\sum_{m\in \N}\sum_{k=2}^{NT-1}\P\left(\mathfrak{B}_{N,m+1},\,m^N_T=m\,,\,\overline{t}^N_{m}=NT-k \right) \notag\\
&\leq \sum _{m=1}^{K_\varepsilon}\sum_{k=2}^{NT-1}\P\left(\mathfrak{B}_{N,m+1},\,m^N_T=m\,,\,\overline{t}^N_{m}=NT-k \right) + \P(m^N_T\geq K_\varepsilon)\notag \\
&\leq \sum _{m=1}^{K_\varepsilon}\sum_{k=2}^{NT-1}\P\left(\mathfrak{B}_{N,m+1},\,m^N_T=m\,,\,\overline{t}^N_{m}=NT-k \right)\label{eq:NT2-bott2-1}\\
&\quad + \P(m^N_T\geq K_\varepsilon)\label{eq:NT2-bott2-2},
\end{align}
where there exist $K_\varepsilon,N_\varepsilon\in \N$ such that for all $N\geq N_\varepsilon$
\begin{equation*}
\P(m^N_T\geq K_\varepsilon)<\frac{\epsilon}{2}.	
 \end{equation*}
We now denote by $\delta^N_m=\sum_{k=1}^{m}(l^N_k-1)$. For \eqref{eq:NT2-bott2-1}, by Assumption \ref{senumerate:1}, we get there exists $\overline{N}_\varepsilon$ such that for all $N \geq \overline{N}_\varepsilon$
\begin{equation*}
\begin{split}
&\sum _{m=1}^{K_\varepsilon}\sum_{k=2}^{NT-1}\P\left(\mathfrak{B}_{N,m+1},\,m^N_T=m\,,\,\overline{t}^N_{m}=NT-k \right)\\
&=\sum _{m=1}^{K_\varepsilon}\sum_{k=2}^{NT-1}\P\left(s^N_{m+1}<k+\delta^N_m\,,\, s^N_{m+1}+l^N_{m+1}\geq k-1\,,\,\overline{t}^N_{m}=NT-k \right)\\
&=\sum _{m=1}^{K_\varepsilon}\sum_{k=2}^{NT-1}\P\left(k-l^N_{m+1}-1 \leq s^N_{m+1}<k+\delta^N_m\,,\,\overline{t}^N_{m}=NT-k \right)\\
&\leq \sum _{m=1}^{K_\varepsilon}\sum_{k=2}^{NT-1}\P\left(k-l^N_{m+1}-1 \leq s^N_{m+1}<k+\delta^N_m\right)\P\left(\overline{t}^N_{m}=NT-k \right).
\end{split}
\end{equation*}
Since the variables $s^N_k$ are geometrically-distributed with parameter $\gamma/N$, the above expression is equal to
\begin{equation*}
    \begin{split}
        &\sum _{m=1}^{K_\varepsilon}\sum_{k=2}^{NT-1}\sum_{s=k-l^N_{m+1}-1}^{k+\delta^N_m-1}\frac{\gamma}{N}\left(1-\frac{\gamma}{N}\right)^{s-1}\P\left(\overline{t}^N_{m}=NT-k \right)\\
&\leq \sum _{m=1}^{K_\varepsilon} \frac{\gamma}{N}\left(l^N_{m+1}+\delta^N_m\right)\leq \frac{\gamma}{N}2\,K_\varepsilon\left(\sum_{j=1}^{K_{\varepsilon}+1}l^N_j\right)<\frac {\varepsilon}{2}.
    \end{split}
\end{equation*}
Putting the two inequalities together we have $\P(\mathfrak{B}_N)\longrightarrow \, 0$ $\text{as }N\to \infty.$

\noindent \textbf{Step 2.}
We prove that the distance $d_\lambda$ between $\{\Pi^N(g)\}_{g \in \N_0}$ and $\{V^N(g)\}_{g \in \N_0}$ is small in probability by defining an intermediate process $Z^N$ with
\begin{equation*}
Z^N(g):=\Pi^N(\pi^N(g)),
\end{equation*}
where $\pi^N$ is the random projection $\pi^N(0)=0$ and $\pi^N(g)=\max\{k \in \N_0: k \leq g, k \notin \cup_{m=1}^{\infty}\{\underline{t}^N_m+1,\ldots ,\overline{t}^N_m-1\}\}$.

We start by bounding  $d_{\lambda}\big((V^N(\Nt))_{0\leq t \leq T},(Z^N(\Nt))_{0\leq t \leq T}\big)$. To this end, we consider the time change $f^N$ defined in \eqref{eq:time-change}.
To bound $\overline{d}_{\lambda}(V^N(\Nt),Z^N(\Nt))$, see \eqref{eq:overline-d-lambda}, we choose $A=[0,T)$ and $f=(f^N)^{-1}$ and for $\overline{d}_{\lambda}(Z^N(\Nt),V^N(\Nt))$ we choose $A=[0,T)$ and $f=f^N$; thus
\begin{equation}\label{eq:dist-V-Z1}
\begin{split}
&d_{\lambda}\big((V^N(\Nt))_{0\leq t \leq T},(Z^N(\Nt))_{0\leq t \leq T}\big)\leq \norm{V^N(\lfloor N\cdot\rfloor)-Z^N \big( \lfloor N(f^N)^{-1}(\cdot)\rfloor \big)}_{\infty}\\&+\norm{\text{Id}-(f^N)^{-1}}_{\infty}+\norm{V^N(NT)-Z^N\big(N(f^N)^{-1}(T)\big)}.	
\end{split}
\end{equation}
Since we have that 
\begin{equation*}
V^N(\Nt)=Z^N(\lfloor N(f^N)^{-1}(t)\rfloor) \quad \forall t\leq T-(\delta^N-1)/N,
\end{equation*}
to compute the uniform norm in \eqref{eq:dist-V-Z1}, we can focus just on the interval $[T-\delta^N/N,T)$ in which $Z^N$ is constant and equal to $V^N(N(T-\delta^N/N))$. Conditioning on $\mathfrak{B}_N^c$, we have
\begin{equation*}
\begin{split}
&\P\left(\norm{V^N(\lfloor N\cdot\rfloor)-Z^N\big(\lfloor N(f^N)^{-1}(\cdot)\rfloor \big)}_{\infty}>\epsilon\right)\\&\leq \P\left(\sup_{t\in [0,1)}\left\{V^N(\lfloor N(T-t\delta^N/N)\rfloor)-V^N(N(T-\delta^N/N))\right\}>\epsilon \right),
\end{split}
\end{equation*}
which tends to zero in probability as $N \to \infty$ since, in absence of bottlenecks, the probability that the ancestral line process changes in $\delta_N$ generations tends to zero in probability. Indeed any possible transition happens with a probability of order $O(N^{-1})$ or smaller and  $\delta_N/N\to 0$ in probability. Hence $\norm{V^N(\lfloor N\cdot\rfloor)-Z^N\big(\lfloor N(f^N)^{-1}(\cdot)\rfloor \big)}_{\infty}\to 0$ in probability. For the same reason, $\norm{V^N(NT)-Z^N\big(N(f^N)^{-1}(T)\big)}$ also converges to zero in probability. Finally, we have
\begin{equation*}
\norm{\text{Id}-(f^N)^{-1}}_{\infty}=\frac{\delta^N}{N}\xrightarrow{N\rightarrow \infty}0 \quad  \text{in probability},
\end{equation*}
so that we can conclude 
\begin{equation*}
d_{\lambda}\big((V^N(\Nt))_{0\leq t \leq T},(Z^N(\Nt))_{0\leq t \leq T}\big)\xrightarrow{N\rightarrow \infty}0 \quad  \text{in probability}.
\end{equation*}
To prove that also $d_{\lambda}\big((\Pi^N(\Nt))_{0\leq t \leq T},(Z^N(\Nt))_{0\leq t \leq T}\big)$ converges to zero in probability, we can follow the same steps as in the proof of Theorem \ref{thm:drastic-diffusion}.
Hence we can conclude
\begin{equation*}
d_{\lambda}\big((\Pi^N(\Nt))_{0\leq t \leq T},(V^N(\Nt))_{0\leq t \leq T}\big)\xrightarrow{N\rightarrow \infty} 0 \,\,\,\,\,\, \text{in probability.}	
\end{equation*}
\vskip 11pt
\noindent\textbf{Step 3.} 
It remains to prove that the generator of $\{V(i)^N\}_{i \in \N_0}$ converges to the infinitesimal generator of $(\Pi(t))_{t \geq 0}$. Provided this is true, since the limit is Feller continuous, weak convergence follows by \cite[Theorem 19.25 and 19.28]{Kallenberg}. We can split the generator $G^N$ of $\{V(i)^N\}_{i \in \N_0}$ by conditioning with respect to the event that there is a bottleneck after a unit of time and the location of this bottleneck. We then get
\begin{equation*}
\begin{split}
G^Nh(\xi)=N\E_{\xi}[h(V^N_1)-h(\xi)]={}&N\E_{\xi}[h(V^N_1)-h(\xi)| s^N_1>0]\,\P(s^N_1>0)\,\\&+\sum_{i=1}^\demes \frac{N}{\demes}\E_{\xi}[h(V^N_1)-h(\xi)|, s^N_1=0, \beta^N_1=i]\,\P(s^N_1=0).
\end{split}
\end{equation*}
The first term converges uniformly to the generator of the structured Kingman coalescent \cite{Herbots_1998}, that is
\begin{equation*}
\begin{split}
\sum_{i=1}^\demes\sum_{\eta\in \mathcal{P}_k^{\demes}}\left[ \frac{1}{\omega_i}\mathds{1}_{\big\{\xi \succi \eta\big\}}+\sum_{j \neq i}\frac{c_{ji}}{\omega_i}\1_{\big\{\xi \simi \eta\big\}}\right]\big(h(\eta)-h(\xi)\big).
\end{split}
	\end{equation*}

Since $P(s^N_1=0)=\gamma/N$, the second term can be written as
\begin{equation*}
\begin{split}
&\sum_{i=1}^\demes \frac{\gamma}{\demes}\E_{\xi}[h(V^N_1)-h(\xi)|, s^N_1=0, \beta^N_1=i]=\sum_{i=1}^\demes \frac{\gamma}{\demes}\sum_{\eta\in \mathcal{P}_k^{\demes}}\big(h(\eta)-h(\xi) \big)\P_{\xi}\left(\prepii(l^N)=\eta\right),
	\end{split}
	\end{equation*}
where $\{\prepii(g)\}_{g \in \N_0}$ is defined in \ref{def:pi-tilda} and simply describes the restriction of the discrete ancestral line process to one soft bottleneck affecting deme $i$. Using Proposition \ref{prop:appendix-coalescent-convergence} and \eqref{eq:lambda} we have
\begin{equation*}
\begin{split}
&\P_{\xi}\left(\prepii(l^N)=\eta\right)\xrightarrow{N\rightarrow \infty}\P_{\xi}\left(\pii(\efflen)=\eta\right)	.
\end{split}
\end{equation*}
Putting everything together we conclude that $G^Nh\xrightarrow{N\rightarrow \infty}G\,h$ uniformly, where
\begin{equation*}
\begin{split}
Gh(\xi)=&\sum_{i=1}^\demes\sum_{\eta\in \mathcal{P}_k^{\demes}}\left[ \frac{1}{\omega_i}\mathds{1}_{\big\{\xi \succi \eta\big\}}+\sum_{j \neq i}\frac{c_{ji}}{\omega_i}\1_{\big\{\xi \simi \eta\big\}}\right]\big(h(\eta)-h(\xi)\big)	\\&+\sum_{i=1}^\demes \frac{\gamma}{\demes}\sum_{\eta\in \mathcal{P}_k^{\demes}}\big(h(\eta)-h(\xi) \big)\P_{\xi}\left(\pii(\efflen)=\eta\right)
\end{split}
\end{equation*}
is the generator of the process defined in Definition~\ref{def:soft-coalescent}.
\end{proof}

\subsection{The frequency process}
We now study the evolution of the frequency of type-$a$ individuals. We denote the discrete frequency process by $\{\mathbf{X}^{N}(g)\}_{g \in \N_0}=\{(X^{N}_1(g),\ldots,X^{N}_\demes(g))\}_{g \in \N_0}$, and, in Theorem \ref{thm:soft-diffusion} we prove that, after rescaling time by a factor $N$, it converges to a diffusion process with jumps. The main difference from the drastic bottlenecks model is that both the length and the size of the bottlenecks go to infinity with $N$. This means that, even if the length $l^N$ is negligible after rescaling time by a factor $N$, we will need an intermediate rescaling to describe the bottleneck's effect. The intuition is that since, backward in time, a bottleneck can be seen as a Kingman coalescent with emigration running for $\efflen$ units of time, we can use the same process to describe the change in the frequency of type-$a$ individuals.

We start by introducing an alternative description of the ancestral process describing a soft bottleneck affecting deme $i$, $(\pii(t))_{t \geq 0}$, introduced in Definition~\ref{def:bottleneck-coalescents}, following the model presented in~\cite{DT_1987}.
We use a different strategy from the usual Kingman coalescent: instead of keeping track of the equivalence classes representing the ancestral relations, we describe the genealogy using the frequency of the initial population in each of those equivalence classes. We explain this procedure in what follows.

Let the process start at the present time $0$ and consider a time $t>0$ in the past.
A certain proportion $\zeta_1$ of the initial population (population at time $0$) may be in deme $i$ and share a common ancestor.
A further proportion $\zeta_2$ may be in deme $i$ as well but share a different ancestor.
Suppose there are $k$ of these groups in the $i$-th island at time $t$, each one including a proportion $\zeta_1,\ldots,\zeta_k$ of the initial population.
We then recall that an additional part of the initial population is descended from migrants arriving from other demes between time $t$ and $0$. For example, suppose that, at a certain time $s\in(0,t)$ in the past, a proportion $z_r$ of the initial population sharing a common ancestor migrates to deme $j$. As we don't have simultaneous migrations in $(\pii(t))_{t \geq 0}$, each class that migrates to deme $j$ does so at a different time, hence we can order these migrating classes from oldest to newest as $\mathbf{Z}^{\j}=(z_1^\j,z_2^\j,\ldots)$, where we measure time backwards. 

To sum up, we can write the genealogical information at time $t$ as:
\begin{equation}\label{eq:freq_state}
(k;\zeta_1,\zeta_2,\ldots,\zeta_k;\mathbf{Z}^{\loc{1}},\ldots,\mathbf{Z}^\loc{i-1},\mathbf{Z}^\loc{i+1},\ldots ,\mathbf{Z}^\loc{\demes}),	
\end{equation}
for some $k\geq 0$, $0\leq \zeta_r \leq 1$, $0\leq z^\j_r \leq 1$, and $\sum_{r=1}^{k}\zeta_i+\sum_{j=1}^\demes \sum_{r=1}^\infty z_r^\j=1$. 
Once the process is at a state of the form \eqref{eq:freq_state}, as $t$ increases, only two events are possible: two classes in deme $i$ merge, and this happens at rate $\binom{k}{2} $, or one class migrates to another island, at rate $kc_i$, where we recall that $c_i = \sum_{j\neq i} c_{ji}$ is the number of migrants. In each case, the number of classes in deme $i$ decreases by one. The time until the next change occurs is exponentially distributed with parameter $\frac{k(k-1+2c_i)}{2}$. Given that such a change occurs, it will be the merging of a particular pair of the $k$ individuals with probability $\frac{2}{k(k-1+2c_i)}$, while, with probability $\frac{2c_{ji}}{k(k-1+2c)}$ it will be the migration of an individual to deme $j\neq i$. 

We now formalise this construction: define a pure death process $(\mathcal{D}^\i(t))_{t\geq 0}$ with death rate $\frac{k(k-1+2c_i)}{2}$ from state $k$ and initial condition $\mathcal{D}^\i(0)=\infty$. Since the time $\mathcal{D}^\i(t)$ takes to reach any state $m\in\N_0$ has finite mean $\sum_{k>m}\frac{2}{k(k-1+2c_i)}$, $\mathcal{D}^\i(t)$ is almost surely finite and behaves as an ordinary death process for $t>0$, \cite{DT_1987}. 

Next, we define a Markov chain with state space $\N_{\infty}\times \Delta$, where
\begin{equation*}
\begin{split}
\Delta=\Bigg\{\delta=&(\mathbf{Z}^k;\mathbf{Z}^{\loc{1}},\ldots,\mathbf{Z}^\loc{i-1},\mathbf{Z}^\loc{i+1},\ldots,\mathbf{Z}^\loc{\demes})|\, \mathbf{Z}^k=(\zeta_1,\zeta_2,\ldots,\zeta_k),  \,\text{$k\geq 0$, $0\leq \zeta_r \leq 1$,}\\& \text{$\mathbf{Z}^{\loc{j}}=(z_1^\j,z_2^\j,\ldots)$,}  \text{ $0\leq z^\j_r \leq 1$,} \text{ and $\sum_{i=1}^{k}\zeta_i+\sum_{j\neq i} \sum_{r=1}^\infty z_r^\j=1$}\Bigg\},
\end{split}
\end{equation*}
and 
\begin{equation*}
\mathbf{Z}^k_{-j}:=(\zeta_1,\ldots,\zeta_{j-1},\zeta_{j+1},\ldots\zeta_k).
\end{equation*}
Let $\{\mathfrak{N}^\i(g)\}_{g\in \N_0}$ be a discrete-time Markov chain on $\N_{\infty}\times \Delta$, independent of $(\mathcal{D}^\i(t))_{t\geq 0}$, and with the following transition probabilities from a state of the form\\ $\mathfrak{N}^\i_k=(k;\zeta_1,\ldots,\zeta_k;\mathbf{Z}^{\loc{1}},\mathbf{Z}^\loc{2},\ldots,\mathbf{Z}^\loc{\demes})$. For any $r,l \in\{1,\ldots ,k\}$ and $j \in \{1,\ldots ,\demes\}$
\begin{align*}
        \P\Bigg(\mathfrak{N}^\i(k-1)=\Big(k-1;\mathbf{Z}^k_{-r}+\zeta_re_l,;\mathbf{Z}^{\loc{1}},\mathbf{Z}^\loc{2},\ldots,\mathbf{Z}^\loc{\demes}\Big)\big|\mathfrak{N}^\i_k\Bigg)&=\frac{1}{k(k-1+2c_i)},\\
        \P\Bigg(\mathfrak{N}^\i(k-1)=\Big(k-1;\mathbf{Z}^k_{-r};\mathbf{Z}^{\loc{1}},\ldots,\mathbf{Z}^{\loc{j-1}},(\zeta_r,z_1^\j,z_2^\j,\ldots),&\mathbf{Z}^{\loc{j+1}},\mathbf{Z}^{\loc{\demes}} \Big)\big|\mathfrak{N}^\i_k\Bigg)\\
        \phantom{\P\Big(\mathfrak{N}^\i(k-1)=(k-1;\mathbf{Z}^k_{-i}; \mathbf{Z}^{\loc{1}},\ldots,\mathbf{Z}^{\loc{j-1}},(\zeta_i,z_1^\j,z_2^\j,\ldots),}
        &=\frac{2c_{ji}}{k(k-1+2c_i)}.
\end{align*}
We then define the process $(\mathcal{N}^\i(t))_{t \geq 0}$ describing the fraction of the time-$0$ population descended from each common ancestor as
\begin{equation}\label{eq:backward-construction}
\mathcal{N}^\i(0)=(\infty;0,0,\ldots) \hspace{1cm} \text{and} \hspace{1cm} \mathcal{N}^\i(t)=\mathfrak{N}^\i(\mathcal{D}^\i(t))\,\,\,\forall t>0.
\end{equation}
The distribution of $\{\mathcal{N}^\i(t)\}_{t \geq 0}$ can be found in \cite[Section 3]{DT_1987}, where the authors first compute the distribution of $\mathcal{D}^\i(t)$ and then derive the conditional distribution of $\mathcal{N}^\i(t)$ given $\mathcal{D}^\i(t)=k$. For any subset $A$ of $\Delta$, we denote
\begin{equation}\label{eq:nu-def}
\nu^{k,\i}(A)=\P(\mathfrak{N}^\i(k)\in A).
\end{equation}
Then,
\begin{equation*}
\P(\mathcal{N}^\i(t)\in \{k\}\times A)=\P(\mathcal{D}^\i(t)=k)\nu^{k,\i}(A).
\end{equation*}
The above description allows us to write down the \emph{candidate} frequency after (forward in time) a bottleneck affecting deme $i$, given that the allele frequencies right before the event were  $x_1,\ldots,x_\demes\in[0,1]$,
\begin{equation*}
\sum_{r=1}^{k}\zeta_r\,\text{B}_r^{x_i}+\sum_{j \neq i}\sum_{r\geq 1}z_r^{\j}\,\text{B}_r^{x_j},	
\end{equation*}
where the $B_i$'s are independent Bernoulli random variables with respective parameters $x_i$, and $k$ is the number of lines left in deme $i$ at time $\efflen$, i.e.\ $\mathcal{D}^\i(\efflen)=k$.
The intuition is that each fraction $\zeta_r$ of the population at the end of the bottleneck shares one ancestor in deme $i$ that chooses a type-$a$ parent with probability $x_i$, and each fraction $z_r^{\j}$ shares one ancestor in deme $j$ that chooses a type-$a$ parent with probability $x_j$.
\begin{theorem}\label{thm:soft-diffusion}
Fix $N\in \N$, $\gamma>0$, and $n \geq 1$, fix also a sequence $\{l_k^N\}_{k\in \N}$, a sequence $\{s_k^N\}_{k\in \N}$ and a function $\presize:\N \rightarrow \N$ satisfying \ref{senumerate:1}, \ref{senumerate:2}, \ref{senumerate:3} respectively. Let $\{\mathbf{X}^N(g)\}_{g\in \N_0}$ be the frequency process of the structured Wright--Fisher model with soft bottlenecks parametrised by $N$, $\gamma$, $\{l_k^N\}_{k\in \N}$ and $\{\presize_k\}_{k\in \N}$. Assume $\mathbf{X}^N(0)\xRightarrow[N\rightarrow \infty]{} \mathbf{X}(0)$ weakly, then for any $T>0$
\begin{equation*}
(\mathbf{X}^N(\lfloor Nt\rfloor))_{0 \leq t \leq T} \xRightarrow[N\rightarrow \infty]{d_{\lambda}} (\mathbf{X}(t))_{0 \leq t \leq T}\,,
\end{equation*}
where $\xRightarrow{d_{\lambda}}$ denotes the weak convergence in the topology induced by $d_{\lambda}$ and $(\mathbf{X}(t))_{t \geq 0}$ is the unique strong solution of the following system of SDEs

\begin{align}\label{eq:soft-frequency}
&dX_i(t)=\sum_{j\neq i}\frac{c_{ji}}{\omega_i}(X_j(t)-X_i(t))dt+\sqrt{\frac{1}{\omega_i}X_i(t)(1-X_i(t))}dB_i(t)\\
&\phantom{=}+\frac{\gamma}{\demes}\int_{\mathcal{S}}\left(\Big[\sum_{r=1}^{k}\zeta_r \1_{\{u_{ir}\leq X_i(t^-)\}}+\sum_{j\neq i} \sum_{r=1}^\infty z_r^{\j}\1_{\{u_{jr}\leq X_j(t^-)\}}\Big]-X_i(t^-)\right)\mathcal{N}_i(dt,dk,d\delta,du),\notag
\end{align}
where $(B_i(t))_{t \geq 0}$ are independent Brownian motions, $\mathcal{S}:=\N_0\times\Delta\times [0,1]^{\demes \times \N} $, and $\mathcal{N}_i$ are independent Poisson measures on $\mathcal{S}:=\N_0\times\Delta\times [0,1]^{\demes \times \N}$ with intensity $\gamma dt\otimes\mathfrak{V}^\i(dk,d\delta)\otimes du$. We define $\mathfrak{V}^\i$  as the semidirect product of the probability distribution $\P(\mathcal{D}^\i(\efflen)=\cdot)$, of a pure death process with death rate $\frac{k(k-1+2c_i)}{2\omega_i}$, and the probability kernel $\nu^\i(\cdot,\cdot)$ where $\nu^\i(k,\cdot)$ is the distribution $\nu^{k,\i}(\cdot)$ defined in \eqref{eq:nu-def} and $du$ is the Lebesgue measure on $[0,1]^{\demes \times \N}$. \end{theorem}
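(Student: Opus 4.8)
The plan is to reuse the three-step scheme of the proofs of Theorems \ref{thm:drastic-diffusion} and \ref{thm:soft-convergence-coalescent}. Since $\{\mathbf{X}^N(g)\}_{g\in\N_0}$ is not Markovian, I would introduce the time-homogeneous Markov process $\mathbf{V}^N(g):=\mathbf{X}^N(k_g)$ collapsing each bottleneck to its last generation, and show (i) that $\mathbf{V}^N$ and $\mathbf{X}^N$ see the same bottlenecks up to the horizon $NT$, (ii) that $d_{\lambda}\big((\mathbf{X}^N(\Nt)),(\mathbf{V}^N(\Nt))\big)\to0$ in probability, and (iii) that the generator of $(\mathbf{V}^N(\Nt))$ converges to that of \eqref{eq:soft-frequency}. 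Steps (i) and (ii) are essentially identical to Steps 1 and 2 of the proof of Theorem \ref{thm:soft-convergence-coalescent}: the probability of the event $\mathfrak{B}_N$ that a bottleneck meets $[NT-1,NT+\delta^N]$ vanishes by the same geometric-waiting-time estimate (Assumptions \ref{senumerate:1}--\ref{senumerate:2} bound $m^N_T$ and give $\delta^N/N\to0$), and the intermediate process $\mathbf{Z}^N(g):=\mathbf{X}^N(\pi^N(g))$ together with the time change $f^N$ of \eqref{eq:time-change} shows $\mathbf{X}^N,\mathbf{Z}^N,\mathbf{V}^N$ are mutually $d_\lambda$-close, as they differ only on the negligible set of bottleneck times. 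The only adaptation is that the Portmanteau step invokes the forthcoming convergence of $\mathbf{V}^N$ to a limit whose discontinuities occur solely at bottlenecks.

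The substance is Step (iii). Writing $\mathcal{A}^Nf(\mathbf{x})=N\E_\mathbf{x}[f(\mathbf{V}^N(1))-f(\mathbf{x})]=N(1-\tfrac{\gamma}{N})\mathcal{L}^Nf(\mathbf{x})+\gamma\,\mathcal{K}^Nf(\mathbf{x})$, the Wright--Fisher part $N\mathcal{L}^Nf$ converges uniformly to the drift-plus-diffusion generator exactly as in the drastic case by \cite[Theorem 3.6]{Eth12}. Conditioning the jump part on the affected deme being the uniformly chosen $i$,
\begin{equation*}
\mathcal{K}^Nf(\mathbf{x})=\frac{1}{\demes}\sum_{i=1}^\demes \E_\mathbf{x}\Big[f\big(\mathbf{x}+(\tilde X^{\i,N,\mathbf{x}}(l^N)-x_i)\mathbf{e}_i\big)-f(\mathbf{x})\Big],
\end{equation*}
where $\tilde X^{\i,N,\mathbf{x}}(l^N)$ denotes the type-$a$ frequency on deme $i$ at the end of a soft bottleneck of length $l^N$. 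It therefore suffices to prove the distributional convergence $\tilde X^{\i,N,\mathbf{x}}(l^N)\Rightarrow\tilde X^{\i,\mathbf{x}}$, where
\begin{equation*}
\tilde X^{\i,\mathbf{x}}:=\sum_{r=1}^{k}\zeta_r B_r^{x_i}+\sum_{j\neq i}\sum_{r\geq 1} z_r^{\j}B_r^{x_j},\qquad (k;\zeta;\mathbf{Z})=\mathcal{N}^\i(\efflen),
\end{equation*}
since bounded continuity of $f$ then upgrades this to $\gamma\,\mathcal{K}^Nf\to\frac{\gamma}{\demes}\sum_i\E[f(\mathbf{x}+(\tilde X^{\i,\mathbf{x}}-x_i)\mathbf{e}_i)-f(\mathbf{x})]$, the jump part of the generator of \eqref{eq:soft-frequency}.

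The main obstacle is this convergence, and I would handle it through the Donnelly--Tavaré dual description set up before the theorem. Each of the $\presize(l^N)$ individuals present at the end of the bottleneck inherits the type of its ancestor at the bottleneck's onset, so grouping by ancestor gives $\tilde X^{\i,N,\mathbf{x}}(l^N)=\sum_r\zeta_r^N B_r^{x_i}+\sum_{j\neq i}\sum_r z_r^{\j,N}B_r^{x_j}$, the $(\zeta^N,\mathbf{Z}^N)$ being the population fractions carried by the blocks of the discrete backward ancestral-fraction process restricted to the bottleneck. Convergence of $\tilde X^{\i,N,\mathbf{x}}(l^N)$ then reduces to convergence of this discrete fraction process to $\mathcal{N}^\i(\efflen)$, i.e.\ joint convergence of the block count to the pure-death process $\mathcal{D}^\i$ and of the conditional fraction laws to the kernels $\nu^{k,\i}$ of \eqref{eq:nu-def}, in the spirit of \cite[Section 3]{DT_1987}. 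The delicate point is the emergence of the effective length $\efflen$: because the bottleneck population size at internal generation $g$ satisfies $\presize(g)\approx l^N\varphi(g/l^N)$, a pair of lineages coalesces at per-generation rate of order $1/\presize(g)$, so the accumulated coalescence clock is $\sum_{g=1}^{l^N}1/\presize(g)\approx\int_0^1\varphi(t)^{-1}\,dt=\efflen$ by \eqref{eq:presize_convergence} and \eqref{eq:lambda}, the same time change turning the discrete emigration into the rate-$c_i$ migration of $(\pii(t))$. This is precisely the convergence already established for the marked-partition process in Theorem \ref{thm:soft-convergence-coalescent}, transported to the population-fraction encoding, and I would invoke it via Proposition \ref{prop:appendix-coalescent-convergence} rather than redo the estimate.

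Finally, well-posedness of \eqref{eq:soft-frequency} and the Feller property of $(\mathbf{X}(t))$ would follow from the soft-bottleneck analogues of Propositions \ref{prop:strong-sol} and \ref{prop:Feller}: the jump intensity $\gamma\,\mathfrak{V}^\i$ is finite and the integrand is Lipschitz in $\mathbf{x}$ uniformly in $(k,\delta,u)$, so pathwise uniqueness is obtained as in the drastic case. With $\mathcal{A}^Nf\to\mathcal{A}f$ uniformly for all $f\in C^3([0,1]^\demes)$ and the limit Feller, weak convergence of $(\mathbf{V}^N(\Nt))$ to $(\mathbf{X}(t))$ follows from \cite[Theorems 19.25 and 19.28]{Kallenberg}; combining this with Step (ii) yields the claimed $d_\lambda$-convergence of $(\mathbf{X}^N(\Nt))$.
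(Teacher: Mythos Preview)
Your overall scheme---collapse bottlenecks into $\mathbf{V}^N$, bound $d_\lambda(\mathbf{X}^N,\mathbf{V}^N)$ via $\mathbf{Z}^N$ and the time change $f^N$, then prove generator convergence---matches the paper exactly, and your treatment of Steps~(i) and~(ii), the Wright--Fisher part of Step~(iii), and the well-posedness/Feller remarks are all in line with what the paper does.

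The one substantive divergence is in how you identify the limit of the bottleneck jump term. You propose to argue \emph{backward}: write the end-of-bottleneck frequency as a weighted Bernoulli sum over the discrete ancestral-fraction blocks, and then show the discrete fraction process converges to the Donnelly--Tavar\'e process $\mathcal{N}^\i(\efflen)$, citing Proposition~\ref{prop:appendix-coalescent-convergence}. The paper instead argues \emph{forward}: it invokes Proposition~\ref{prop:appendix-frequency-convergence} to obtain
\[
\E\big[f(\tilde{\mathbf{X}}^{\i,N}(l^N))-f(\mathbf{x})\big]\xrightarrow{N\to\infty}\E\big[f(\tilde{\mathbf{X}}^{\i}(\efflen))-f(\mathbf{x})\big],
\]
where $\tilde{\mathbf{X}}^\i$ is the one-deme Wright--Fisher diffusion with immigration (generator $\tilde{\mathcal{A}}^\i$), and only \emph{afterwards}, at the level of the continuous limit, rewrites the law of $\tilde{X}^\i_i(\efflen)$ via the Donnelly--Tavar\'e duality to match the jump kernel in \eqref{eq:soft-frequency}.

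Your backward route is morally correct but has a real gap at the point where you write ``transported to the population-fraction encoding, and I would invoke it via Proposition~\ref{prop:appendix-coalescent-convergence}''. That proposition gives convergence of the marked-partition process for a \emph{fixed finite sample} of size $n$; it says nothing about the fractions $(\zeta_r^N,\mathbf{Z}^{N})$ induced by tracing back the \emph{entire} deme population of size $\presize(l^N)\to\infty$. Passing from one to the other is precisely the entrance-from-infinity statement for the coalescent with emigration (that $\mathcal{D}^\i$ comes down from infinity and that the empirical block masses converge to $\nu^{k,\i}$), which is additional work not supplied by Proposition~\ref{prop:appendix-coalescent-convergence}. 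The paper sidesteps this entirely: the forward diffusion approximation in Proposition~\ref{prop:appendix-frequency-convergence} is a routine generator computation for a time-inhomogeneous Wright--Fisher chain, and the Donnelly--Tavar\'e identity is then applied only to the \emph{limiting} diffusion, where it is already known. If you want to keep your backward route, you would need to prove the fraction-process convergence directly (e.g.\ via exchangeability and a de~Finetti/tightness argument on top of Proposition~\ref{prop:appendix-coalescent-convergence}); otherwise, switching to the paper's forward-then-duality argument closes the gap with no extra machinery.
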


\begin{proof}
\textbf{Step 1.} Firstly, as $\{\mathbf{X}^N(g)\}_{g\in \N}$ is not Markovian, we introduce the Markov process
\begin{equation*}
\mathbf{V}^N(k):=\mathbf{X}^N(g_k),
\end{equation*}
where $g_k$ is the $k$-th generation that is not in the set $\cup_{k=1}^{\infty}\{\underline{t}^N_k+1,\ldots ,\overline{t}^N_k-1\}$. Following the steps of the proof of Theorem \ref{thm:drastic-diffusion} and using Assumption \ref{senumerate:1}, we conclude that
\begin{equation*}
d_{\lambda}\big((\mathbf{X}^N(\Nt))_{0\leq t \leq T},(\mathbf{V}^N(\Nt))_{0\leq t \leq T}\big)\xrightarrow{N\rightarrow \infty} 0 \,\,\,\,\,\, \text{in probability}.	
\end{equation*}
\textbf{Step 2.} Now we prove that the generator of $(\mathbf{V}^N(\lfloor Nt\rfloor))_{t \geq 0}$ converges to that of $(\mathbf{X}(t))_{t \geq 0}$ as $N \to \infty$. Weak convergence then follows by \cite[Theorem 19.28]{Kallenberg}. The generator of $\{\mathbf{V}^N(g)\}_{g\in \N_0}$ is, for all $f \in C^3([0,1]^\demes)$,
\begin{equation}\label{eq:discrete-generator-soft-frequency}
\begin{split}
\mathcal{A}^Nf(\mathbf{x})=&N\left(1-\frac{\gamma}{N}\right)\mathcal{L}^Nf(\mathbf{x})
+\sum_{i=1}^\demes \frac{\gamma}{\demes}\E\left[f(\tilde{\mathbf{X}}^{\i,N}(l^N))-f(\mathbf{x})\right],
\end{split}
\end{equation}
where $\mathcal{L}^N= \E_{\mathbf{x}}[f(\mathbf{V}^N_1)-f(\mathbf{x})|s_0^N>1]$ and the Markov chain $\{\tilde{\mathbf{X}}^{\i,N}(g)\}_{g\in \{0,\ldots,l^N\}}$ is the pre-limiting frequency process during a bottleneck in deme $i$, as defined in \eqref{eq:appendix-prelimiting-frequency}. As in the proof of Theorem \ref{thm:drastic-diffusion}, we have that for all $f \in C^3([0,1]^\demes)$, $N\mathcal{L}^Nf \xrightarrow{N\rightarrow \infty} \mathcal{L}f$ where
\begin{equation*}
\mathcal{L}f(\mathbf{x}):= \sum_{i=1}^\demes \left(\sum_{j\neq i}\frac{c_{ji}}{\omega_i}(x_j-x_i)\frac{\partial f}{\partial x_i}(\mathbf{x})+\frac{1}{2\omega_i}x_i(1-x_i)\frac{\partial f^2}{\partial x_i^2}(\mathbf{x})\right).\end{equation*}
Using Proposition \ref{prop:appendix-frequency-convergence} and the fact that 
\begin{equation*}
\int_0^1\frac{1}{\varphi(t)}dt=\efflen,
\end{equation*}
we conclude that the jump part is such that for any $i\in \{1,\ldots,\demes\}$
\begin{equation*}
\E\left[f(\tilde{\mathbf{X}}^{\i,N}(l^N))-f(\mathbf{x})\right]\xrightarrow{N\rightarrow \infty}\E\left[f(\tilde{\mathbf{X}}^{\text{\tiny{($i$)}}}(\efflen))-f(\mathbf{x})\right],
\end{equation*}
where $(\tilde{\mathbf{X}}^\i(t))_{t \geq 0}=(\tilde{X}_1^\i(t),\ldots ,\tilde{X}_D^\i(t))_{t \geq 0}$ is a diffusion process with infinitesimal generator:
\begin{equation*}
\tilde{\mathcal{A}}^\i f(\mathbf{x})=\sum_{j\neq i}c_{ji}(x_j-x_i)\frac{\partial f}{\partial x_i}(\mathbf{x})+\frac{1}{2}x_i(1-x_i)\frac{\partial f^2}{\partial x_i^2}(\mathbf{x}).
\end{equation*}
The generator \eqref{eq:discrete-generator-soft-frequency} therefore converges uniformly, as $N \to \infty$, to
\begin{equation}\label{eq:soft-frequency-generator-secondver}
\begin{split}
& \sum_{i=1}^\demes \left(\sum_{j\neq i}\frac{c_{ji}}{\omega_i}(x_j-x_i)\frac{\partial f}{\partial x_i}(\mathbf{x})+\frac{1}{2\omega_i}x_i(1-x_i)\frac{\partial f^2}{\partial x_i^2}(\mathbf{x})\right)\\
&+\sum_{i=1}^\demes \frac{\gamma}{\demes}\int_{[0,1]}\P_{\mathbf{x}}\left(\tilde{X}_i^{\i}(\efflen)\in dx'\right)\left(f\left(x_1,\ldots ,x_{i-1},x',x_{i+1},\ldots ,x_D\right)-f(\mathbf{x})\right).
\end{split}
\end{equation}
It remains to prove that \eqref{eq:soft-frequency-generator-secondver} is indeed equal to the generator of the limiting process $(\mathbf{X}(t))_{t \geq 0}$ which is
\begin{equation}\label{eq:soft-frequency-generator}
\begin{split}
&\mathcal{A}f(\mathbf{x})= \sum_{i=1}^\demes \Bigg[\Bigg(\sum_{j\neq i}\frac{c_{ji}}{\omega_i}(x_j-x_i)\frac{\partial f}{\partial x_i}(\mathbf{x})+\frac{1}{2\omega_i}x_i(1-x_i)\frac{\partial f^2}{\partial x_i^2}(\mathbf{x})\Bigg)\\&+\frac{\gamma}{\demes}\sum_{k\geq 1}\P(\mathcal{D}^\i(\efflen)=k)\int_{\Delta} \nu^{k,\i}(d\delta)\P\Big((\sum_{r=1}^k\zeta_r B_r^{x_i}+\sum_{j\neq i}\sum_{r \geq k}z^{\j}_rB_r^{x_j})\in dx'\Big)\\&\quad\quad\quad\quad\quad\quad\left(f\left(x_1,\ldots ,x_{i-1},x',x_{i+1},\ldots ,x_D\right)-f(\mathbf{x})\right)\Bigg].
\end{split}
\end{equation}
To conclude we use the duality relation between the forward in time process $(\tilde{X}_i^{\i}(t))_{t \geq 0}$ and the backward in time $(\mathcal{N}^\i(t))_{t \geq 0}$ defined in \eqref{eq:backward-construction}. In particular, we consider the process $\mathcal{N}^\i$ started at $\mathcal{N}^\i(0)=(\infty;0,0,\ldots)$ and we let it run for $\efflen$ units of time. This tells us the number of remaining ancestors at the beginning of the
bottleneck and the frequencies of the initial population descended from each
of them. Each ancestor is of type $a$ with probability $x_j$ if it is in the $j$-th subpopulation, and
the fraction of the current population sharing that ancestor also share its
type. Hence we can write
\begin{equation*}
\begin{split}
&\P_{\mathbf{x}}\left(\tilde{X}_i^{\i}(\efflen)\in dx'\right)\\&=\sum_{k\geq 1}\P(\mathcal{D}^\i(\efflen)=k)\bigintsss_{\Delta} \nu^{k,\i}(d\delta)\P\left(\left(\sum_{r=1}^k\zeta_i B_r^{x_i}+\sum_{j \neq i}\sum_{r \geq k}z^{\j}_rB_r^{x_j}\right)\in dx'\right),
\end{split}
\end{equation*}
and this concludes the proof.
\end{proof}
\begin{remark}
One can prove that \eqref{eq:soft-frequency} has a unique strong solution which is Feller continuous using the technique presented in Proposition \ref{prop:Feller}. 
\end{remark}

\subsection{Duality}
We now prove that the frequency process $(\mathbf{X}(t))_{t \geq 0}$ and the coalescent $(\Pi(t))_{t \geq 0}$ satisfy a duality relation.
We denote by $(\mathbf{N}(t))_{t \geq 0}=(N_1(t),\ldots,N_\demes(t))_{t \geq 0}$ the block-counting process corresponding to the coalescent with \emph{soft-bottleneck} $(\Pi(t))_{t \geq 0}$, i.e.\ $N_i(t)$ is the number of blocks with label $i$ of $\Pi(t)$. Its transitions are:
 \begin{itemize}
\item $\mathbf{n}\mapsto \mathbf{n}-\mathbf{e}_i$ with rate
\begin{equation*}
	\frac{1}{\omega_i}\binom{n_i}{2}+\frac{\gamma}{\demes}\P(\tilde{N}_i^\i(\efflen)=n_i-1,\,\tilde{N}_j^\i(\efflen)=0 \,\,\forall j\neq i),
\end{equation*}
\item $\mathbf{n}\mapsto \mathbf{n}-\mathbf{e}_i+\mathbf{e}_j$ with rate
\begin{equation*}
	\frac{c_{ji}}{\omega_i}n_i+\frac{\gamma}{\demes}\P(\tilde{N}_i^\i(g)=n_i-1, \tilde{N}_j^\i(\efflen)=1),
\end{equation*}
\item $\mathbf{n}\mapsto \mathbf{n}-(n_i-a)\mathbf{e}_i+\sum_{j\neq i}b_j\mathbf{e}_j$ with rate
\begin{equation*}
	\frac{\gamma}{\demes}\P(\tilde{N}_i^{\i,n}(\efflen)=a,\,\tilde{N}_j^\i(\efflen)=b_j \,\,\forall j\neq i),
\end{equation*}
\end{itemize}
where $(\tilde{N}^\i(t))_{t \geq 0}$ is the block-counting process associated to $(\pii(t))_{t \geq 0}$.
For any function $h:\N^\demes \rightarrow \R$, the infinitesimal generator of the block-counting process $\mathbf{N}$ is given by:

\begin{align}\label{eq:soft-coalescent-generator}
&\mathcal{G}h(\mathbf{n}):=\sum_{i=1}^\demes\left[\frac{1}{\omega_i}\binom{n_i}{2}(h(\mathbf{n}-\mathbf{e}_i)-h(\mathbf{n}))+\frac{c_{ji}}{\omega_i}n_i(h(\mathbf{n}-\mathbf{e}_i+\mathbf{e}_j)-h(\mathbf{n}))\right]\\
&+\sum_{i=1}^\demes\frac{\gamma}{\demes}\sum_{\substack{a,(b_j)_{j\neq i}\,:\\ a+\sum_{j\neq i} b_j\leq n_i}}\P(\tilde{N}_i^{\i,n}(\efflen)=a,\,\tilde{N}_j^\i(\efflen)=b_j)\Big(h\big(n-(n_i-a)e_i+\sum_{j\neq i}b_je_j\big)-h(n)\Big).\notag
\end{align}
\begin{theorem}
For every $x\in[0,1]^\demes$ and $n\in \N^\demes$, we have
\begin{equation*}
\E_{x}\Bigg[\prod_{i=1}^\demes X_i(t)^{n_i}\Bigg]=\E_{n}\Bigg[\prod_{i=1}^\demes x_i^{N_i(t)}\Bigg].
\end{equation*}

\end{theorem}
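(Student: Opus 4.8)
The plan is to prove the generator identity $\mathcal{A}\,h(\mathbf{x},\mathbf{n}) = \mathcal{G}\,h(\mathbf{x},\mathbf{n})$ for the duality function $h(\mathbf{x},\mathbf{n}) = \prod_{i=1}^\demes x_i^{n_i}$, where $\mathcal{A}$ (the generator \eqref{eq:soft-frequency-generator} of $(\mathbf{X}(t))_{t\geq 0}$) acts on the first argument and $\mathcal{G}$ (the generator \eqref{eq:soft-coalescent-generator} of $(\mathbf{N}(t))_{t\geq 0}$) on the second. Since $(\mathbf{X}(t))_{t\geq 0}$ is Feller continuous (by the technique of Proposition \ref{prop:Feller}, as noted in the Remark following Theorem \ref{thm:soft-diffusion}), the claimed moment duality then follows from \cite[Proposition 1.2]{JK14}, exactly as in the proof of Theorem \ref{thm:drastic-frequency}.

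I would split each generator into its Wright--Fisher/Kingman part and its jump part. The diffusive part of $\mathcal{A}$ and the binary-merger-plus-single-migration part of $\mathcal{G}$ are literally identical to those treated in Theorem \ref{thm:drastic-frequency}, so that computation carries over verbatim: applying the Wright--Fisher component of $\mathcal{A}$ to $h(\cdot,\mathbf{n})$ and rearranging reproduces precisely the action of $\mathcal{G}^{\text{K}}$ on $h(\mathbf{x},\cdot)$. It therefore remains only to match the jump parts.

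For the jump parts I would again use sampling duality in the spirit of \cite{Mohle99}, showing that, for each $i$, the $i$-th jump summand of $\mathcal{A}\,h(\cdot,\mathbf{n})$ and of $\mathcal{G}\,h(\mathbf{x},\cdot)$ both equal the probability that a sample of $n_l$ lineages in each deme $l$ consists entirely of type-$a$ individuals immediately after a soft bottleneck in deme $i$. Computing this forward in time, deme $i$ has post-bottleneck frequency $\tilde{X}_i^\i(\efflen)$ while the unaffected demes retain frequency $x_j$, so the conditional probability is $\big(\tilde{X}_i^\i(\efflen)\big)^{n_i}\prod_{j\neq i}x_j^{n_j}$; taking expectations and inserting the representation of $\P_{\mathbf{x}}(\tilde{X}_i^\i(\efflen)\in dx')$ derived in the proof of Theorem \ref{thm:soft-diffusion} reproduces the jump term of $\mathcal{A}$. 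Computing the same probability backward in time, the $n_i$ lineages coalesce and emigrate during the bottleneck into $\tilde{N}_i^{\i,\mathbf{n}}(\efflen)$ ancestors remaining in deme $i$ and $\tilde{N}_j^{\i,\mathbf{n}}(\efflen)$ ancestors now in deme $j$; each ancestor independently chooses a type-$a$ parent with the frequency of its deme, giving $x_i^{\tilde{N}_i^{\i,\mathbf{n}}(\efflen)}\prod_{j\neq i}x_j^{n_j+\tilde{N}_j^{\i,\mathbf{n}}(\efflen)}$, whose expectation is the jump term of $\mathcal{G}$.

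The main obstacle is the equality of these two expressions, which at its heart is the within-bottleneck duality between the forward process $(\tilde{\mathbf{X}}^\i(t))_{t\geq 0}$ — a Wright--Fisher diffusion with emigration, with generator $\tilde{\mathcal{A}}^\i$ — and the backward block-counting process $(\tilde{N}^\i(t))_{t\geq 0}$ of the structured Kingman coalescent with emigration $(\pii(t))_{t\geq 0}$. This is exactly the classical moment duality underlying the Donnelly--Tavaré construction of \cite{DT_1987}, and the identity $\P_{\mathbf{x}}(\tilde{X}_i^\i(\efflen)\in dx') = \sum_{k\geq 1}\P(\mathcal{D}^\i(\efflen)=k)\int_\Delta \nu^{k,\i}(d\delta)\,\P(\,\cdot\,\in dx')$ established inside the proof of Theorem \ref{thm:soft-diffusion} is precisely the ingredient that converts the forward computation into the backward one after running for the effective length $\efflen$. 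Care is needed only in the bookkeeping of the lineages sitting in the unaffected demes $j\neq i$, which pass through the bottleneck untouched and contribute the common factor $\prod_{j\neq i}x_j^{n_j}$ to both sides.
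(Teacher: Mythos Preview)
Your proposal is correct and follows the paper's overall strategy: establish the generator identity $\mathcal{A}\,h=\mathcal{G}\,h$ for $h(\mathbf{x},\mathbf{n})=\prod_i x_i^{n_i}$ and invoke \cite[Proposition~1.2]{JK14} via Feller continuity, with the Wright--Fisher/Kingman parts carried over verbatim from Theorem~\ref{thm:drastic-frequency}.

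The only methodological difference lies in the jump part. You argue by sampling duality in the spirit of \cite{Mohle99} (mirroring the drastic case) and then appeal to the Donnelly--Tavar\'e representation from the proof of Theorem~\ref{thm:soft-diffusion} to identify the forward and backward expressions. The paper instead bypasses the sampling-duality heuristic entirely: it rewrites the jump part of $\mathcal{A}$ in the form \eqref{eq:soft-frequency-generator-secondver} and then proves directly, by a one-line generator computation, that $\tilde{\mathcal{A}}^{\i}h(\mathbf{x},\mathbf{n})=\tilde{\mathcal{G}}^{\i}h(\mathbf{x},\mathbf{n})$ for the within-bottleneck pair $(\tilde{\mathbf{X}}^{\i},\tilde{\mathbf{N}}^{\i})$. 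This yields the moment duality $\E_{\mathbf{x}}\big[\prod_j \tilde{X}^{\i}_j(t)^{n_j}\big]=\E_{\mathbf{n}}\big[\prod_j x_j^{\tilde{N}^{\i}_j(t)}\big]$ for all $t$, which at $t=\efflen$ gives exactly the equality you identify as ``the main obstacle''. The paper's route is thus more self-contained (it does not need to invoke \cite{DT_1987} or the sampling interpretation), while yours makes the probabilistic meaning more transparent; the content is the same.
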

\begin{proof}
Let $\mathcal{A}$ be the generator of the allele-frequency process $(X_t)_{t\geq 0}$, as defined in \eqref{eq:soft-frequency-generator}, and $\mathcal{G}$ the one of the block-counting process $(N_t)_{t\geq 0}$.
Consider the function $h:[0,1]^\demes \times\N^\demes\rightarrow\R$, $h(x,n)=\prod_{i}x_i^{n_i}$. Our aim is to prove that
\begin{equation*}
\mathcal{A}\,h(x,n)=\mathcal{G}\,h(x,n),
\end{equation*}
when $\mathcal{A}$ acts on the first component and $\mathcal{G}$ on the second, and $\mathcal{A}$ is the generator \eqref{eq:soft-frequency-generator} of the allele-frequency process. Since $(X(t))_{t\geq 0}$ is Feller continuous, duality follows from Proposition 1.2 of \cite{JK14}.
We can rewrite $\mathcal{A}h(x,n)$ as in \eqref{eq:soft-frequency-generator-secondver}
\begin{align}\label{eq:long-soft generator}
\mathcal{A}h(x,n)={}& \sum_{i=1}^\demes \left(\sum_{j\neq i}\frac{c_{ji}}{\omega_i}(x_j-x_i)\frac{\partial f}{\partial x_i}(x)+\frac{1}{2\omega_i}x_i(1-x_i)\frac{\partial f^2}{\partial x_i^2}(x)\right)\\
&+\sum_{i=1}^\demes \frac{\gamma}{\demes}\int_{[0,1]}\P_{\mathbf{x}}\left(\tilde{X}_i^{\i}(\efflen)\in dx'\right)\left(f\left(x_1,\ldots ,x_{i-1},x',x_{i+1},\ldots ,x_D\right)-f(\mathbf{x})\right).\notag
\end{align}	
We proved in Theorem \ref{thm:drastic-frequency} that the duality holds for the Wright--Fisher and Kingman part. Following the same computation, we can show the following relation 
\begin{equation*}
\E_{x}\left[\prod_{j=1}^\demes \tilde{X}^\i_j(t)^{n_i}\right]=\E_{n}\left[\prod_{j=1}^\demes x_j^{\tilde{N}^\i(t)}\right] \quad\quad \forall t\geq 0.
\end{equation*}
In fact we have
\begin{equation*} 
\begin{split}
&\tilde{\mathcal{A}}^\i\,h(x,n)\\
&= \frac{1}{2}x_i(1-x_i)n_i(n_i-1)x_i^{n_i-2}\prod_{j\neq i }x_j^{n_j}+\sum_{j \neq i}c_{ji}(x_j-x_i)n_ix^{n_i-1}\prod_{k\neq i}x_k^{n_k}\\
&=\binom{n_i}{2} \Big(x_i^{n_i-1}\prod_{j\neq i }x_j^{n_j}-\prod_{j=1}^\demes x_j^{n_j} \Big) + \sum_{j \neq i}c_{ji}n_i\Big(x_i^{n_i-1}x_j^{n_j+1}\prod_{k\neq i,j }x_k^{n_k}-\prod_{j=1}^\demes x_j^{n_j} \Big)\\
&=\tilde{\mathcal{G}}^\i\,h(x,n),
\end{split}
\end{equation*}
where $\tilde{\mathcal{G}}^\i$ is the generator of $(\tilde{N}^\i(t))_{t \geq 0}$. Hence, for $t=\efflen$, we have 
\begin{equation*}
	\begin{split}
		&\int_{[0,1]}\P_{x,y}\left(\tilde{X}^{\i}(\efflen)\in dx'\right)h\left(x',n\right)=\E_{x}\left[\prod_{j=1}^\demes \tilde{X}^\i_j(\efflen)^{n_i}\right]=\E_{n}\left[\prod_{j=1}^\demes x_i^{\tilde{N}^\i_j(\efflen)}\right]\\&=\sum_{\substack{a,(b_j)\,:\\ a+\sum_{j\neq i} b_j\leq n_i}}\P(\tilde{N}_i^{\i,n}(\efflen)=a,\,\tilde{N}_j^\i(\efflen)=b_j \,\,\forall j\neq i)h\big(x,n-n_ie_i+ae_i+\sum_{j\neq i}b_je_j\big).
	\end{split}
\end{equation*}
Putting this into \eqref{eq:soft-coalescent-generator} and \eqref{eq:long-soft generator} we conclude that
\begin{equation*}
\mathcal{A}h(x,n)=\mathcal{G}h(x,n).
\end{equation*}
\end{proof}

\section{Simulations}\label{section:simulations}

In this final section, we show and discuss some simulations of the coalescent processes introduced in this paper. We are especially interested in the shape of the Site Frequency Spectrum (SFS) associated with these models. The SFS is one of the most widely used summary statistics in population genetics. It describes how frequent certain mutant alleles are within a set of genomic data of a variety of species, and it is often used to compare data with theoretical models. The code is available at \url{https://github.com/martadaipra/bottlenecks-w-migration}.

We start by introducing the \emph{infinite sites model} for genetic variation.  We assume that individuals can mutate, each new mutation occurs at a new site along the chromosome, and that mutations are passed on to all descendants unchanged. In other words, mutations accumulate and do not cancel each other. Given a coalescent tree of a sample of $n$ individuals, we assume that mutations fall on it according to a Poisson point process with constant intensity $\vartheta/2$ per unit branch length, for some $\vartheta>0$. The SFS is then defined as the vector
\begin{equation*}
M(n)=(M_1(n),M_2(n),\ldots,M_{n-1}(n)),	
\end{equation*}
where $M_j(n)$ is the number of sites at which exactly $j$ individuals have a mutation. 

In the following we present some simulations of the logit transform of the mean normalised SFS, and show that we have built a flexible model which can predict very different SFS shapes using a small number of interpretable parameters: the migration rate, the bottleneck rate, and the size and length (resp.\ effective length) for drastic (resp.\ soft) bottlenecks. 
 \begin{figure}
\centering
\begin{subfigure}{.49\textwidth}
  \centering
  \includegraphics[width=1.0\linewidth]{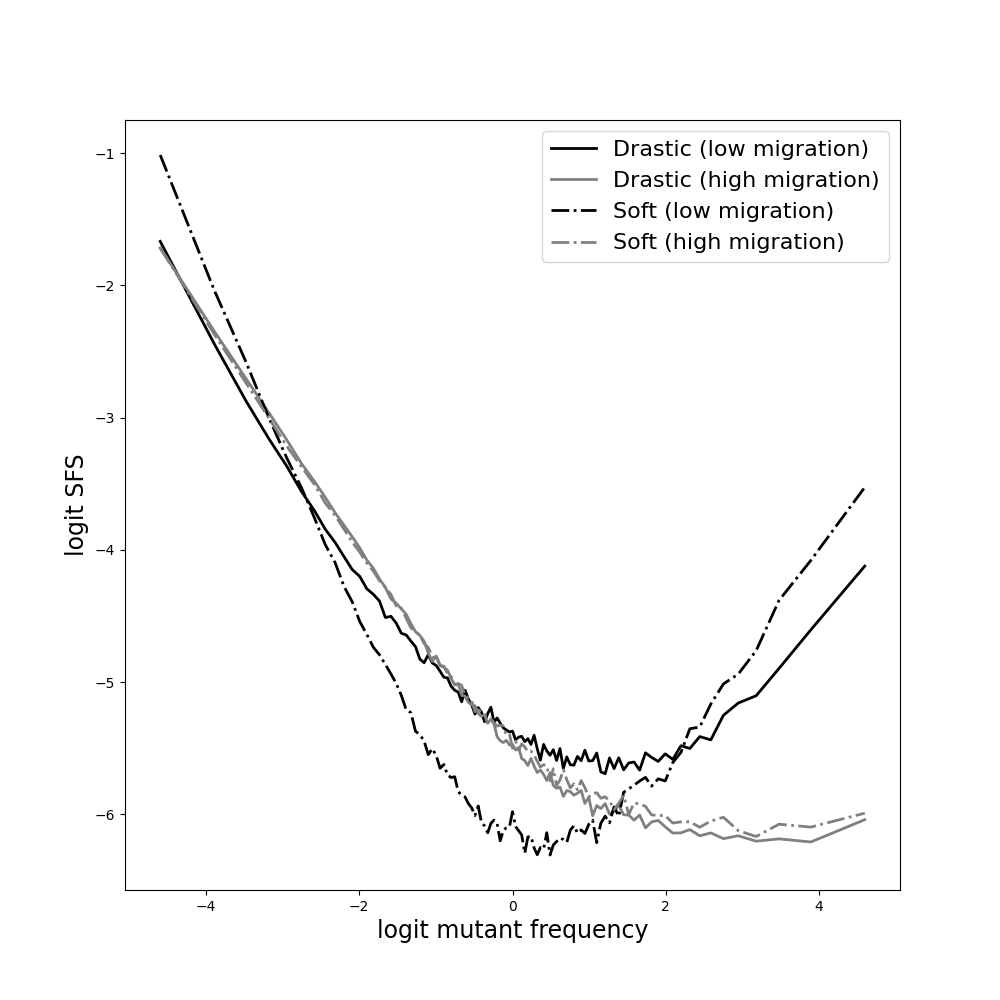}
  \label{fig:sub11}
\end{subfigure}%
\begin{subfigure}{.49\textwidth}
  \centering
  \includegraphics[width=1\linewidth]{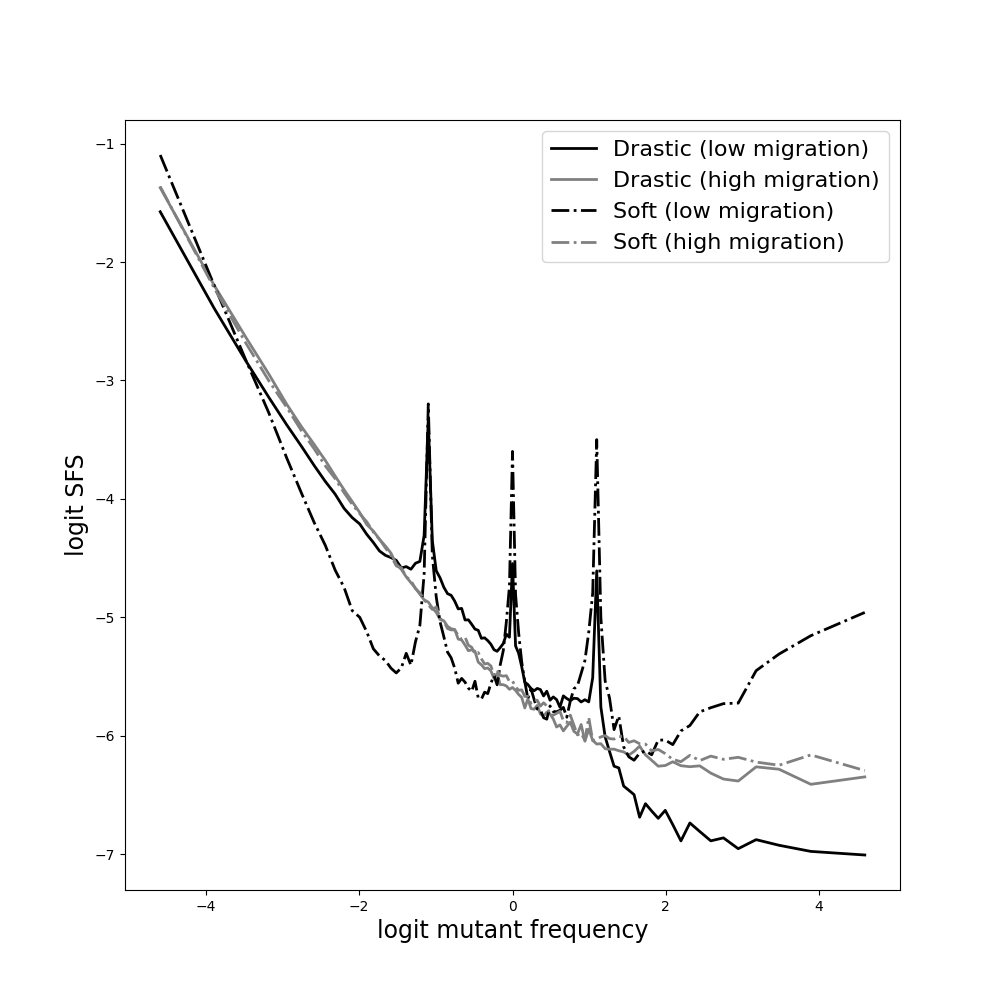}
  \label{fig:sub21}
\end{subfigure}
\caption{Two simulations of the SFS of a sample of size $n=100$. We simulate the SFS of the soft and drastic bottleneck structured coalescent for two values of the migration rate (high rate=10, low rate=0.1), while the bottleneck rate is equal to $10$ and the number of islands is equal to $D=4$. 
On the left-hand side, we start our simulation with all the individuals in one island while, on the right-hand side, the initial condition distributes individuals evenly across the four islands.}
\label{fig:SFS1}
\end{figure}

 \begin{figure}
  \centering
  \includegraphics[width=0.5\linewidth]{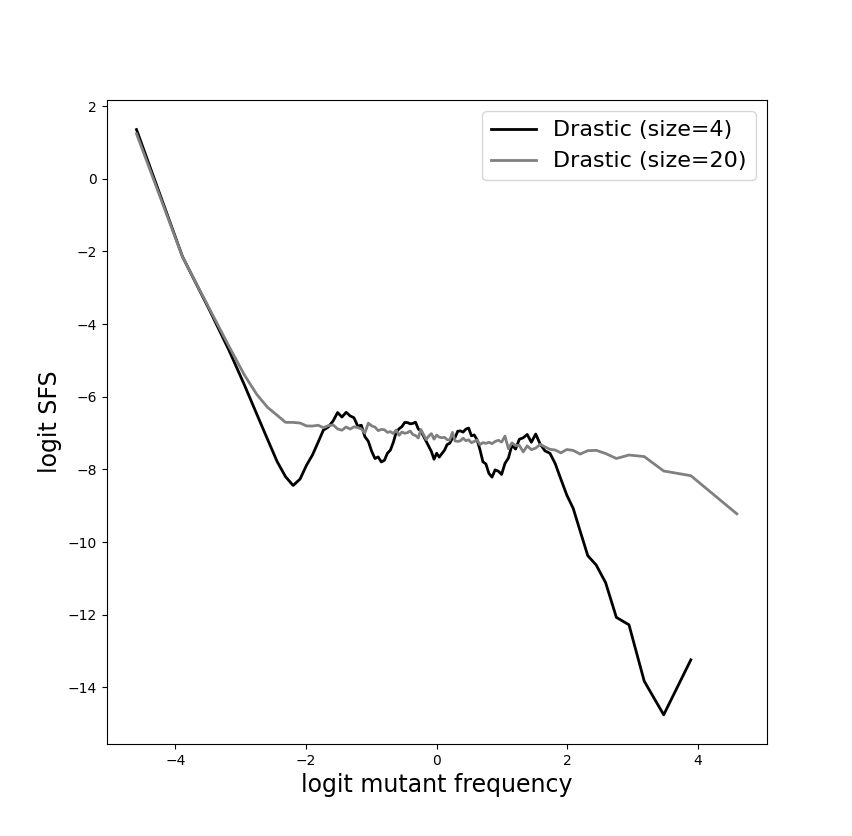}
\caption{Simulations of the SFS of a sample of size $n=100$. We simulate the model with drastic bottleneck without geographical structure (one island) with a very large bottleneck rate of $100$. The black line represents the resulting SFS when the bottleneck is of size $4$, while the grey line the one when the size is equal to $20$.}
\label{fig:SFS3}
\end{figure}

It is a known result \cite{Fu95} that for the Kingman coalescent we have 
\begin{equation*}
\E(M_j(n))=\frac{\vartheta}{j},
\end{equation*}
hence, in this setting, we have a monotone decreasing shape. However, it has been observed that genome data of various species present an excess of low and high frequency variants, resulting in a U-shaped SFS \cite{EBBF15,Freund23, ME20}.
Many $\Lambda$-coalescents and $\Xi$-coalescents predict a U-shaped SFS: they usually have long external branches which lead to a higher number of singletons (mutations appearing in just one individual); on the other hand, high frequency mutations that affect $n-1$ individuals occur on a branch that connect to a subtree of size $n-1$.
Such a branch exists in a Kingman coalescent tree with very low probability, but can be more likely in coalescent with multiple mergers trees which are more unbalanced (see Appendix A.1 in \cite{Freund23}). 
On the left-hand side of Figure~\ref{fig:SFS1} we show that, if we sample individuals all from the same deme and take a small migration rate, we can replicate this simultaneous excess of singletons and high-frequency sites. The corresponding SFS indeed presents a strong U-shape. When we consider a high migration rate, the effect is much weaker. The intuition is that with high migration lineages spread out fast, and hence bottlenecks cannot capture a lot of lineages into large mergers.

On the right-hand side of Figure \ref{fig:SFS1} we repeat the same simulation with a different initial condition: individuals in the sample are evenly distributed across demes. When the migration rate is low, we capture the behaviour of structured coalescents, i.e.\ the SFS is characterised by peaks that appear because many individuals merge within their island before migrating. When the migration rate is high, we lose this shape because the population becomes almost homogeneous.

In Figure \ref{fig:SFS3} we show that we get the 4-peak pattern in the SFS observed in diploid Xi-coalescents derived from generalised Moran models \cite{BCEH16} by using a single island and frequent drastic bottlenecks
in which the population is reduced to four individuals. Since our coalescent is derived from a Wright--Fisher model, diploidy is readily incorporated by doubling the population size inside and outside bottlenecks. Increasing the population size during the bottleneck to 20 results in 19 peaks, each with such a small amplitude that they have effectively disappeared (grey line in Figure \ref{fig:SFS3}).

  Recent data sets obtained from the diploid and highly fecund Atlantic cod \cite{AKHE23} show a clear signal of multiple mergers, but no sign of the intermediate peaks predicted by earlier multiple merger coalescents based on generalised Moran models. Our model is thus a promising candidate for further analysis of such organisms. Furthermore it demonstrates the broader point that multiple merger coalescents in which multiple mergers build up over a few generations (which nevertheless constitute a negligible time increment in the infinite population scaling limit) provide a more realistic basis for data analysis than the more prevalent and mathematically tractable approach in which multiple mergers occur in one generation.

\section*{Acknowledgements}
We thank Adri\'an Gonzalez Casanova for helpful discussions.
Jere Koskela was funded in part by EPSRC research grant EP/V049208/1. Maite Wilke-Berenguer was supported by the Deutsche Forschungsgemeinschaft (DFG, German Research Foundation) under Germany's Excellence Strategy -- The Berlin Mathematics Research Center MATH+ (EXC-2046/1, project ID: 390685689) and acknowledges support from DFG CRC/TRR 388 ``Rough Analysis, Stochastic Dynamics and Related Fields''.

\noindent Data sharing is not applicable to this article as no new data were generated or analyzed.

\appendix
\section{Existence and uniqueness of a strong solution for the limiting drastic bottleneck frequency process}\label{appendixA}
In this section, we prove existence and uniqueness of the limit \eqref{eq:drastic-diffusion} of the allele frequency process of the model with drastic bottlenecks introduced in Section \ref{section:drastic}. We recall that, in the following, $L$ is a probability measure on $\N$ and $F:\N \rightarrow \N$ a function such that $\max_i{c_i}\leq F(n)$ for any $n\in \N$, where $c_i:=\sum_{j\neq i}c_{ij}$. We follow the classic approach by Yamada and Watanabe of proving pathwise uniqueness first, in Proposition \ref{prop:pathwise uniqueness}, and then existence of a weak solution to finally obtain a unique strong solution in Proposition \ref{prop:strong-sol}. 

In order to simplify the notation, we define for $\mathbf{x}\in [0,1]^\demes$,
\begin{align*}
\mathbf{b}(\mathbf{x})&=(b_1(\mathbf{x}),\ldots,b_\demes(\mathbf{x})), \quad b_i(\mathbf{x})=\sum_{j\neq i}\frac{c_{ji}}{\omega_i}(x_j-x_i), \\
\pmb{\sigma}(\mathbf{x})&=(\sigma_{ij}(\mathbf{x}))_{1\leq i,j \leq \demes}, \quad \sigma_{ij}(\mathbf{x})=\1_{\{i=j\}}\sqrt{\omega_i^{-1}x_i(1-x_i)}, \quad \sigma_{i}(x_i):=\sigma_{ii}(\mathbf{x}),
\end{align*}
and the function $\mathbf{g}:[0,1]^\demes \times \mathcal{S}\rightarrow [0,1]^\demes$ as $\mathbf{g}(\mathbf{x},  \tau, a,  u) := (g_1(\mathbf{x},  \tau, a,  u),\ldots,g_D(\mathbf{x},  \tau, a,  u))$ with
\begin{equation}\label{eq:definition_c}
g_i(\mathbf{x},\tau,a,u)=\frac{1}{\size(\tau)}\Big[\sum_{j=1}^{\demes}\sum_{m=1}^{\infty}a_{jm}(\tau)\1_{\{u_{jm}\leq x_j\}}\Big]-x_i,
\end{equation}
where $\mathcal{S}:=\N_0 \times \N_0^{\demes \times \N} \times [0,1]^{\demes \times \N}$. Using $\mathbf{X}(t):=(X_1(t), \ldots,  X_D(t))$, $t \geq 0$, we can rewrite the SDE in \eqref{eq:drastic-diffusion} as

\begin{equation}\label{eq:drastic-diffusion-simply}
d\mathbf{X}(t)=\mathbf{b}\left(\mathbf{X}(t)\right)dt+\pmb{\sigma}\left(\mathbf{X}(t)\right)d\mathbf{W}(t)+\int_\mathcal{S} \mathbf{g}\left(\mathbf{X}(t^-),\tau,a,u\right)\,\mathbf{N}(dt,d\tau,da,du),	
\end{equation}
where $\mathbf{W}$ is a $D$-dimensional Brownian motion and $\mathbf{N}=(\mathcal{N}_1,\ldots ,\mathcal{N}_D)$ are independent Poisson random measures on $(0,\infty)\times \mathcal{S}$ with intensity $\gamma ds \otimes \mathfrak{A}^\i(d\tau,da)\otimes du$ for $i=1,\ldots ,D$, , where $du$ is the Lebesgue measure on $[0,1]^{\demes \times \N}$ and $\mathfrak{A}^\i$  is the semidirect product of $L$ and the probability kernel $A^\i(\cdot,\cdot)$. Recall that $A^\i(\tau,\cdot)$ was the distribution of the matrix $\mathbf{A}^\i(\tau)\in  \N_0^{\demes\times \N}$ of the final family sizes after a bottleneck affecting deme $i$ for $\tau$ generations with population size determined by $\size$, see page \pageref{object:familysizesendofbottleneck}. 
\begin{proposition}\label{prop:pathwise uniqueness}
Let $L$ be a probability measure on $\N$, and $F:\N \rightarrow \N$ a function such that $\max_i{c_i}\leq F(n)$ for any $n\in \N$, where $c_i:=\sum_{j\neq i}c_{ij}$. Pathwise uniqueness holds for equation \eqref{eq:drastic-diffusion}.
\end{proposition}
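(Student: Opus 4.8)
The plan is to follow the classical Yamada--Watanabe scheme, exploiting that the drift $\mathbf{b}$ is Lipschitz, that the diffusion coefficient $\pmb{\sigma}$ is diagonal and of Wright--Fisher (square-root) type, and that the jump coefficient $\mathbf{g}$, although built from discontinuous indicators, becomes Lipschitz once averaged over the auxiliary variable $u$. Concretely, I would fix two solutions $\mathbf{X}$ and $\mathbf{Y}$ of \eqref{eq:drastic-diffusion-simply} on the same probability space, driven by the same Brownian motion $\mathbf{W}$ and the same Poisson random measures $\mathbf{N}$, and started from the same initial point, and set $D_i(t):=X_i(t)-Y_i(t)$. The goal is a Gronwall estimate for $\E[\sum_{i=1}^\demes |D_i(t)|]$.

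To handle the non-Lipschitz diffusion I would introduce the standard Yamada--Watanabe functions $\phi_n\in C^2(\R)$ with $\phi_n(x)\uparrow |x|$, $0\le \phi_n(x)\le |x|$, $|\phi_n'|\le 1$, and $0\le \phi_n''(x)\le \frac{2}{n|x|}\1_{[a_n,a_{n-1}]}(|x|)$ for a suitable null sequence $a_n\downarrow 0$. Applying the Itô formula for semimartingales with jumps to $\phi_n(D_i(t))$ and localising by stopping times to kill the local-martingale parts, the drift contribution is controlled by $|\phi_n'|\le 1$ and the linearity of $\mathbf{b}$, giving a bound of the form $C\,\E[\sum_j |D_j(s)|]$; the Brownian contribution is the classical one, where the estimate $(\sigma_i(x)-\sigma_i(y))^2\le \omega_i^{-1}|x-y|$ (square root of a Lipschitz function) combined with the defining property of $\phi_n''$ yields a vanishing term of order $1/n$.

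The model-specific work is the jump term, and this is the step I expect to be the main obstacle. After a jump of $\mathcal{N}_i$ with mark $(\tau,a,u)$ the $i$-th coordinates are replaced by $\frac{1}{\size(\tau)}\sum_{j,m}a_{jm}\1_{\{u_{jm}\le X_j\}}$ and $\frac{1}{\size(\tau)}\sum_{j,m}a_{jm}\1_{\{u_{jm}\le Y_j\}}$, so the post-jump difference $D_i^{\mathrm{new}}$ is a weighted sum of indicator differences. The key idea is that the non-Lipschitz indicators become Lipschitz after averaging: using $\phi_n\le|\cdot|$, the triangle inequality, and the elementary identity $\int_0^1 |\1_{\{u\le x\}}-\1_{\{u\le y\}}|\,du=|x-y|$, integrating over $u$ gives $\int \phi_n(D_i^{\mathrm{new}})\,du \le \int |D_i^{\mathrm{new}}|\,du \le \frac{1}{\size(\tau)}\sum_{j}\big(\sum_m a_{jm}\big)|D_j| \le \sum_{j}|D_j|$, since $\sum_{j,m}a_{jm}=\size(\tau)$. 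Because $\mathfrak{A}^\i$ is a probability measure and the total jump rate is finite, integrating against the compensator bounds the jump contribution by $C\,\E[\sum_j |D_j(s)|]$ as well, the subtracted $-\phi_n(D_i)$ term being non-positive and hence discardable.

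Collecting the three contributions, summing over $i$, and letting $n\to\infty$ so that $\phi_n(D_i)\to|D_i|$ and the $1/n$ term disappears, I would arrive at $\E[\sum_i |D_i(t)|]\le C\int_0^t \E[\sum_j |D_j(s)|]\,ds$ with $\E[\sum_i|D_i(0)|]=0$. Gronwall's lemma then forces $\E[\sum_i |D_i(t)|]=0$ for all $t$, i.e.\ $\mathbf{X}\equiv\mathbf{Y}$ almost surely, which is pathwise uniqueness. The only genuinely delicate point beyond routine bookkeeping is establishing the Lipschitz-after-averaging bound for the jump term; once that is in place, the argument reduces to the familiar square-root SDE uniqueness combined with Gronwall.
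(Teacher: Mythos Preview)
Your proposal is correct and follows essentially the same approach as the paper: Yamada--Watanabe smoothing functions applied coordinatewise, the drift handled by Lipschitz continuity, the diffusion by the standard $1/n$ estimate for square-root coefficients, and---crucially---the jump term controlled by the observation that integrating the indicator differences over the auxiliary uniform variables $u$ yields a Lipschitz bound (this is exactly the content of the paper's Lemma~\ref{lemma:cLip}). The only cosmetic difference is that the paper bounds $\varphi_k(D_i+\Delta g_i)-\varphi_k(D_i)$ via the $1$-Lipschitz property of $\varphi_k$ and then invokes the lemma, whereas you discard the nonpositive $-\phi_n(D_i)$ and bound $\phi_n(D_i^{\mathrm{new}})\le |D_i^{\mathrm{new}}|$ directly; both routes arrive at the same Gronwall inequality.
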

\begin{proof}
The drift and the diffusion coefficients satisfy the conditions of the classical result by Yamada and Watanabe \cite{YW71}. Indeed
\begin{enumerate}
\item[(i)] there exists a constant $K_{\sigma}$ such that, for every $x,z\in [0,1]$ and $i=1,\ldots,\demes$,
\begin{equation*}
|\sigma_i(x)-\sigma_i(z)|\leq K_{\sigma}\sqrt{\abs{x-z}},
\end{equation*}
\item[(ii)]	there exists a constant $K_{b}$ such that, for every $\mathbf{x},\mathbf{z}\in [0,1]^\demes$ and $i=1,\ldots,\demes$,
\begin{equation*}
|b_i(\mathbf{z})-b_i(\mathbf{x})|\leq K_{b}\norm{\mathbf{x}-\mathbf{z}},
\end{equation*}
\end{enumerate}
where with $\norm{\cdot}$ we indicate the Euclidean norm. In addition, in Lemma \ref{lemma:cLip} below, we prove that
\begin{enumerate}
\item[(iii)] there exists a constant $K_{c}$ such that, for every $\mathbf{x},\mathbf{z}\in [0,1]^\demes$ and $i=1,\ldots,\demes$,
\begin{equation*}
\int_\mathcal{S}|g_i(\mathbf{z},\tau,a,u)-g_i(\mathbf{x},\tau,a,u)|\,\mathfrak{A}^\i(d\tau,da)\,du\leq K_{c}\norm{\mathbf{x}-\mathbf{z}}.
\end{equation*}	
\end{enumerate}

We now follow the proof of Theorem 1 in \cite{YW71} considering the sequence $a_0=1>a_1>a_2>\ldots>a_k>\ldots\rightarrow 0$ defined by
\begin{equation*}
\int_{a_1}^{a_0}\frac{1}{r}dr=1,\,\int_{a_2}^{a_1}\frac{1}{r}dr=2,\,\ldots,\,\int_{a_k}^{a_{k-1}}\frac{1}{r}dr=k,\, \ldots.
\end{equation*}
Then, for all $k\geq 1$, there exists a twice continuously differentiable function $\varphi_k$ on $[0,\infty)$  such that $\varphi_k(0)=0$ and
\begin{equation*}
\varphi'_k(r) =   \left\{ \begin{array}{ll}
0 & 0\leq r\leq a_k,\\
\text{between }0 \text{ and }1 & a_k<r<a_{k-1},\\
1& r\geq a_{k-1}, 
\end{array}
\right.
\end{equation*}
 and 
\begin{equation*}
\varphi''_k(r) =   \left\{ \begin{array}{ll}
0 & 0\leq r\leq a_k,\\
\text{between }0 \text{ and }\frac{1}{k} \frac{1}{r} & a_k<r<a_{k-1},\\
0& r\geq a_{k-1}. 
\end{array}
\right.
\end{equation*}
We extend these functions symmetrically to $(-\infty,\infty)$ so that $\varphi_k(r)=\varphi_k(|r|)$.
We have $\varphi_k(r)\uparrow |r|$ as $k\rightarrow \infty$ for each $r\in \R$. Now let $\mathbf{X}$ and $\mathbf{X}'$ be two solutions of \eqref{eq:drastic-diffusion-simply} on the same probability space and with respect to the same Brownian motion and Poisson processes, with $\mathbf{X}(0)=\mathbf{X}'(0)$.
Then 
\begin{equation*}
\begin{split}
X_i(t)-X'_i(t)=&\int_{0}^t\big[\sigma_i(X_i(s))-\sigma_i(X'_i(s))\big] dW_i(s)+\int_0^t\big[b_i(\mathbf{X}(s))-b_i(\mathbf{X}'(s))\big]	ds\\&+\int_0^t\int_\mathcal{S} \big[g_i(\mathbf{X}(s),\tau,a,u)-g_i(\mathbf{X}'(s),\tau,a,u)\big]\,\mathbf{N}(ds,d\tau,da,du).
\end{split}
\end{equation*}
By It\=o's formula (see for example \cite[Theorem 4.4.7]{Applebaum_2009}), for any $k \in \N$, $ i=1, \ldots, D$
\begin{equation*}
\begin{split}
\E\big[\varphi_k(X_i(t)-X'_i(t))\big]   &=\E\left[\int_{0}^t\varphi'_k(X_i(s)-X'_i(s))\big[b_i(\mathbf{X}(s))-b_i(\mathbf{X}'(s))\big]	ds\right]\\
    &\quad +\E\left[\frac{1}{2}\int_0^t\varphi''_k(X_i(s)-X'_i(s))\big[\sigma_i(X_i(s))-\sigma_i(X'_i(s))\big]^2ds\right]\\
    & \quad +\E\Bigg[\int_0^t\int_{\mathcal S} \Big[\varphi_k\left(X_i(s)-X'_i(s)+g_i(\mathbf{X}(s),\tau,a,u)-g_i(\mathbf{X}'(s),\tau,a,u)\right)\\
    &\quad \qquad \qquad \qquad -\varphi_k\left(X_i(s)-X'_i(s)\right)\Big]\,\mathfrak{A}^\i(d\tau,da)\,du\,ds\Bigg].
\end{split}
\end{equation*}
We bound the first two terms  as in the proof  of Theorem 1 in \cite{YW71}. Since $\varphi'_k$ is uniformly bounded, by (ii) 
we have
\begin{equation*}
\begin{split}
\left|\E\left[\int_{0}^t\varphi'_k(X_i(s)-X'_i(s))\big[b_i(\mathbf{X}(s))-b_i(\mathbf{X}'(s))\big]ds\right]\right|\leq K_b\int_{0}^t\E[\norm{\mathbf{X}(s)-\mathbf{X}'(s)}]\,ds.
\end{split}
\end{equation*}
For the second term we can estimate for every $i=1,\ldots, D$ and uniformly in $\omega$ 
\begin{align*}
\Big|\frac{1}{2}\int_0^t    & \varphi''_k(X_i(s)-X'_i(s))\big[\sigma_i(X_i(s))-\sigma_i(X'_i(s))\big]^2ds\Big|\\
            &\qquad\leq \frac{K^2_{\sigma}}{2}\int_0^t\varphi''_k(X_i(s)-X'_i(s))\vert  X_i(s)- X_i'(s)\vert ds\\
            &\qquad\leq \frac{K^2_{\sigma}}{2} t\cdot \max_{a_k\leq \abs{x}\leq a_{k-1}}[\varphi''_k(x)\,\vert x\vert]\leq \frac{K^2_{\sigma}}{2}\,t\cdot \frac{1}{k}\rightarrow 0, \,\,\,\,\,\, \text{as }k\rightarrow \infty
\end{align*}
for any $t \geq 0$. Using (iii) and the fact that $\varphi_k$ is Lipschitz with Lipschitz constant smaller than $1$, for the jump term we have, for all $ k\in \N$, $i=1, \ldots, D$,
\begin{equation*}
\begin{split}
&\E\Bigg[\int_0^t\int_{\mathcal S} \Big|\varphi_k\big(X_i(s)-X'_i(s)+g_i(\mathbf{X}(s),\tau,a,u)-g_i(\mathbf{X}'(s),\tau,a,u)\big)\\
&\qquad\qquad\quad -\varphi_k\big(X_i(s)-X'_i(s)\big)\Big|\,\mathfrak{A}^\i(d\tau,da)\,du\,ds\Bigg]\\&\qquad\leq \E\left[\int_0^t\int_{\mathcal S} |g_i(\mathbf{X}(s),\tau,a,u)-g_i(\mathbf{X}'(s),\tau,a,u)|\,\mathfrak{A}^\i(d\tau,da)\,du\,ds\right]\\
&\qquad\leq K_c\int_0^t\E[\norm{\mathbf{X}(s)-\mathbf{X}'(s)}]ds.
\end{split}
\end{equation*}
Combining these bounds, and taking the limit as $k\to \infty$, we have, by monotone convergence, for some constant $K$,
\begin{equation}\label{eq:Gronwall}
	\E[\norm{\mathbf{X}(t)-\mathbf{X}'(t)}]\leq K\int_0^t\E[\norm{\mathbf{X}(s)-\mathbf{X}'(s)}]ds,
\end{equation}
which, by Gr\"onwall's inequality, implies $\E[\norm{\mathbf{X}(t)-\mathbf{X}'(t)}]= 0$ $\forall t\geq 0$ and, since our processes are right continuous, we can conclude $\P\left(\forall t\geq 0,\,\mathbf{X}(t)= \mathbf{X}'(t)\right)=1$.
\end{proof}
Now we turn to the proof of the estimate (iii) fur the jump-part,  that we used in the previous proof.  
\begin{lemma}\label{lemma:cLip} 
Let $F:\N \rightarrow \N$ be a function such that $\max_i{c_i}\leq F(n)$ for any $n\in \N$. Given the function $\mathbf{g}:[0,1]^\demes \times \mathcal{S}\rightarrow [0,1]^\demes$ defined in \eqref{eq:definition_c},
there exists a constant $K_c$ such that for any $\mathbf{x},\mathbf{z}\in [0,1]^\demes$
\begin{equation}\label{eq:cLip}
\int_\mathcal{S}|g_i(\mathbf{z},\tau,a,u)-g_i(\mathbf{x},\tau,a,u)|\,\mathfrak{A}^\i(d\tau,da)\,du\leq K_{c}\norm{\mathbf{x}-\mathbf{z}},
\end{equation}	
for all $i=1,\ldots,\demes$.
\end{lemma}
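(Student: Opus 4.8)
The plan is to estimate the integrand pointwise in $(\tau,a)$ after integrating out the $u$-variables explicitly, and then to use that $\mathfrak{A}^\i$ is a probability measure. First I would record the identity
\[
g_i(\mathbf{z},\tau,a,u)-g_i(\mathbf{x},\tau,a,u)=\frac{1}{\size(\tau)}\sum_{j=1}^\demes\sum_{m=1}^\infty a_{jm}(\tau)\big(\1_{\{u_{jm}\leq z_j\}}-\1_{\{u_{jm}\leq x_j\}}\big)-(z_i-x_i),
\]
which follows directly from \eqref{eq:definition_c}, and split it by the triangle inequality into the family-average term and the term $|z_i-x_i|$. For $a$ in the support of $\mathfrak{A}^\i$ one has $a_{jm}(\tau)=0$ for all but finitely many indices (at most $\size(1)-c_i$ for $j=i$ and $c_{ji}\tau$ for $j\neq i$), so both sums are finite and $g_i$ depends on only finitely many coordinates of $u$; Tonelli's theorem then applies.

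For the family-average term I would integrate coordinate by coordinate: for each $(j,m)$,
\[
\int_0^1\big|\1_{\{u_{jm}\leq z_j\}}-\1_{\{u_{jm}\leq x_j\}}\big|\,du_{jm}=|z_j-x_j|,
\]
because the two indicators disagree exactly on the interval between $x_j$ and $z_j$, while integrating over the remaining coordinates of $u$ contributes a factor $1$. Hence
\[
\int_{[0,1]^{\demes\times\N}}\Big|\frac{1}{\size(\tau)}\sum_{j,m}a_{jm}(\tau)\big(\1_{\{u_{jm}\leq z_j\}}-\1_{\{u_{jm}\leq x_j\}}\big)\Big|\,du\leq \frac{1}{\size(\tau)}\sum_{j=1}^\demes\Big(\sum_{m}a_{jm}(\tau)\Big)|z_j-x_j|.
\]
Now I would invoke that $\mathfrak{A}^\i$ is supported on matrices satisfying the conservation identity $\sum_{j,m}a_{jm}(\tau)=\size(\tau)$ established in Section~\ref{section:drastic}: the coefficients $\size(\tau)^{-1}\sum_m a_{jm}(\tau)$ are then nonnegative and sum to $1$, so bounding each $|z_j-x_j|\leq\norm{\mathbf{x}-\mathbf{z}}$ yields the uniform bound $\norm{\mathbf{x}-\mathbf{z}}$, independently of $\tau$ and $a$. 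The remaining term $|z_i-x_i|$ does not depend on $u$ and is likewise at most $\norm{\mathbf{x}-\mathbf{z}}$.

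Combining the two contributions gives $\int_{[0,1]^{\demes\times\N}}|g_i(\mathbf{z},\tau,a,u)-g_i(\mathbf{x},\tau,a,u)|\,du\leq 2\norm{\mathbf{x}-\mathbf{z}}$, uniformly over $(\tau,a)$ in the support of $\mathfrak{A}^\i$. Since $\mathfrak{A}^\i$ is a probability measure on $\N_0\times\N_0^{\demes\times\N}$, integrating this uniform bound against $\mathfrak{A}^\i(d\tau,da)$ leaves it unchanged, which is exactly \eqref{eq:cLip} with $K_c=2$. There is no serious analytic obstacle: the only point requiring care is the interchange of the summation with the integration over the infinite-dimensional $u$-space, and this is justified by the finiteness of the family count $\size(1)+c_i(\tau-1)$ for each fixed $\tau$ together with the fact that $g_i$ depends on only finitely many coordinates of $u$.
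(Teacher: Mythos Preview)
Your proof is correct and follows essentially the same route as the paper: split via the triangle inequality into the family-average term and the $|z_i-x_i|$ term, use $\int_0^1|\1_{\{u\leq z\}}-\1_{\{u\leq x\}}|\,du=|z-x|$, and invoke the conservation identity $\sum_{j,m}a_{jm}(\tau)=\size(\tau)$ together with $\mathfrak{A}^\i$ being a probability measure. The only cosmetic difference is that the paper phrases the computation in terms of expectations of random variables $(G,\mathbf{A}^\i,\mathcal{U})$ rather than as explicit $du$-integrals, and does not record an explicit constant; your version is slightly cleaner in that it yields $K_c=2$.
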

\begin{proof}
We can rewrite the left-hand side of \eqref{eq:cLip} as
\begin{equation*}
\begin{split}
&\int_\mathcal{S}|g_i(\mathbf{z},\tau,a,u)-g_i(\mathbf{x},\tau,a,u)|\,\mathfrak{A}^\i(d\tau,da)\,du\\&=\E\left[\,\left|\frac{1}{\size(G)}\left(\sum_{j=1}^{\demes}\sum_{m=1}^{\infty}A^\i_{jm}(\1_{\{\mathcal{U}_{jm}\leq x_j\}}-\1_{\{\mathcal{U}_{jm}\leq z_j\}})\right)-(x_i-z_i)\right|\,\right],\end{split}
\end{equation*}
where $G$ is a random variable with law $L$, $\mathbf{A}^\i=(A^\i_{jm})_{j=1, 
 \ldots,   \demes, m   \in \N}$ is a random matrix with conditional distribution $A^\i(G,\cdot)$,   and the $\mathcal{U}_{jm}$ are independent uniform random variables on $[0,1]$, also  independent of $G$. 
On the event $G=\tau$, $A^\i_{jm}=a^\i_{jm}$ for $i\in \N$, we get 
\begin{equation*}
\begin{split}
&\E\left[\,\left|\frac{1}{\size(\tau)}\left(\sum_{j=1}^{\demes}\sum_{m=1}^{\infty}a^\i_{jm}(\1_{\{\mathcal{U}_{jm}\leq x_j\}}-\1_{\{\mathcal{U}_{jm}\leq z_j\}})\right)-(x_i-z_i)\right|\,\right]\\& \hspace{0.5cm}\leq \frac{1}{\size(\tau)}\left[\sum_{j=1}^{\demes}\sum_{l=1}^{\infty}a^\i_{jm}\abs{x_j-z_j} \right]	+\abs{x_i-z_i}.
\end{split}
\end{equation*}
Using the independence of $\mathcal{U}_{jm}$ with respect to the other variables, we have
\begin{equation*}
\begin{split}
&\E\left[\,\left|\frac{1}{\size(G)}\left(\sum_{j=1}^{\demes}\sum_{m=1}^{\infty}A^\i_{jm}(\1_{\{\mathcal{U}_{jm}\leq x_j\}}-\1_{\{\mathcal{U}_{jm}\leq z_j\}})\right)-(x_i-z_i)\right|\,\right]\\
&\leq \E \left[\frac{1}{\size(G)}\left(\sum_{j=1}^{\demes}\sum_{m=1}^{\infty}A^\i_{jm}\abs{x_j-z_j} \right)	+\abs{x_i-z_i}\right] \\
&\leq \sum_{j=1}^{\demes}\abs{x_j-z_j} \,\E\left[\frac{1}{\size(G)}\left(\sum_{m=1}^{\infty}A^\i_{jm}\right)\right]+\abs{x_i-z_i}\leq K_c\norm{\mathbf{x}-\mathbf{z}}.
\end{split}
\end{equation*}
\end{proof}

\begin{proposition}\label{prop:strong-sol}Let $L$ be a probability measure on $\N$, and $F:\N \rightarrow \N$ a function such that $\max_i{c_i}\leq F(n)$ for any $n\in \N$, where $c_i:=\sum_{j\neq i}c_{ij}$.
There exists a unique strong solution of the SDE system \eqref{eq:drastic-diffusion}.	
\end{proposition}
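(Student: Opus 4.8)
The plan is to conclude via the Yamada--Watanabe principle. Pathwise uniqueness for \eqref{eq:drastic-diffusion-simply} (equivalently \eqref{eq:drastic-diffusion}) has already been established in Proposition \ref{prop:pathwise uniqueness}, so it remains only to exhibit a weak solution; the combination of weak existence with pathwise uniqueness then yields existence of a unique strong solution. I would therefore devote the proof to constructing a $[0,1]^\demes$-valued weak solution of \eqref{eq:drastic-diffusion-simply} and then invoke the jump-diffusion form of the Yamada--Watanabe theorem (as in \cite{Applebaum_2009}) to upgrade it to a strong solution.

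First I would handle the purely diffusive part, i.e.\ the equation obtained by deleting the jump integral, $d\mathbf{X}(t)=\mathbf{b}(\mathbf{X}(t))\,dt+\pmb{\sigma}(\mathbf{X}(t))\,d\mathbf{W}(t)$, which is a structured Wright--Fisher diffusion with migration. The coefficients $\mathbf{b}$ and $\pmb{\sigma}$ are continuous and bounded on the compact cube $[0,1]^\demes$, so weak existence follows from classical martingale-problem theory (concretely, by mollifying $\pmb{\sigma}$ to Lipschitz coefficients, taking the resulting strong solutions, establishing tightness, and passing to a limit that solves the martingale problem). The cube is invariant: the diffusion coefficient $\sigma_i(\mathbf{x})=\sqrt{\omega_i^{-1}x_i(1-x_i)}$ degenerates on $\{x_i\in\{0,1\}\}$, while the migration drift $b_i(\mathbf{x})=\sum_{j\neq i}\frac{c_{ji}}{\omega_i}(x_j-x_i)$ is non-negative when $x_i=0$ and non-positive when $x_i=1$, so each face of $[0,1]^\demes$ is repelling and $\mathbf{X}$ a.s.\ never leaves the cube. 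Since conditions (i)--(ii) verified in Proposition \ref{prop:pathwise uniqueness} are exactly the Yamada--Watanabe hypotheses for this diffusion, weak existence together with pathwise uniqueness gives a unique strong solution of the diffusive SDE on $[0,1]^\demes$.

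Next I would reinstate the jumps by interlacing, using crucially that the jump mechanism has \emph{finite} total intensity: each $\mathcal{N}_i$ has intensity $\gamma\,dt\otimes\mathfrak{A}^\i(d\tau,da)\otimes du$ with $\mathfrak{A}^\i\otimes du$ a probability measure, so $\mathcal{N}_1,\ldots,\mathcal{N}_\demes$ have a.s.\ only finitely many atoms on any interval $[0,T]$. Enumerating the resulting jump times $0<T_1<T_2<\cdots$, I would run the diffusive solution from the initial condition up to $T_1$; at $T_1$, if the atom belongs to $\mathcal{N}_i$ with coordinates $(\tau,a,u)$, update only the $i$-th coordinate to $\frac{1}{\size(\tau)}\sum_{j,m}a_{jm}\1_{\{u_{jm}\leq X_j(T_1^-)\}}$, which lies in $[0,1]$ by the definition of $\mathbf{g}$ in \eqref{eq:definition_c}; then continue the diffusion up to $T_2$, and so on. Because jumps preserve $[0,1]^\demes$ and do not accumulate, this interlacing neither explodes nor escapes the cube, and produces a càdlàg $[0,1]^\demes$-valued process, adapted to the driving pair $(\mathbf{W},\mathbf{N})$, that solves \eqref{eq:drastic-diffusion-simply}. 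This is the required weak solution, and Yamada--Watanabe then delivers the unique strong solution.

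The only genuinely delicate points are the weak existence and boundary invariance for the degenerate Wright--Fisher diffusion, both of which are classical for diffusions of this type; in fact the one nontrivial analytic estimate, the $L^1$-Lipschitz bound on the jump kernel (Lemma \ref{lemma:cLip}), was already carried out for the uniqueness half of the argument. The interlacing step is routine precisely because the bottleneck jumps occur at finite rate, so I expect no substantive obstacle beyond assembling these standard ingredients.
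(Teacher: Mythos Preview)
Your proposal is correct and follows the same overall Yamada--Watanabe strategy as the paper---pathwise uniqueness from Proposition~\ref{prop:pathwise uniqueness} combined with weak existence---but the two arguments obtain weak existence by genuinely different means. The paper attacks the full jump-diffusion generator at once: it rewrites $\mathcal{L}$ in terms of a jump kernel $M(\mathbf{x},\cdot)=\nu\{\mathbf{r}:\mathbf{g}(\mathbf{x},\mathbf{r})\in\cdot\}$, checks the boundedness and continuity hypotheses of Stroock's existence theorem \cite{Stroock_1975} for the associated martingale problem, and then invokes Kurtz's equivalence \cite{Kurtz_2011} between martingale-problem solutions and weak SDE solutions, before concluding via \cite{BLP_2015}. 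You instead exploit the key structural feature that the bottleneck jumps arrive at finite total rate $\gamma$: you solve the purely diffusive structured Wright--Fisher equation first (where weak existence and boundary invariance are classical) and then interlace the diffusion with the a.s.\ finitely many jumps on any bounded interval. Your route is more elementary and self-contained, bypassing the abstract martingale-problem machinery entirely; the paper's route is more systematic and would still apply if the jump measure had infinite mass. One small remark: since your diffusive solution is already strong and the interlacing uses the given Poisson random measures $\mathbf{N}$, your construction in fact yields a \emph{strong} solution directly, so the final appeal to Yamada--Watanabe for the full equation is redundant (though of course harmless).
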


\begin{proof}
Consider $f \in C^3([0,1]^\demes)$ and let $Df=(\frac{\partial f}{\partial x_1},\ldots,\frac{\partial f}{\partial x_{\demes}})$. Consider the infinitesimal generator of (\ref{eq:drastic-diffusion-simply}):
\begin{equation*}
\begin{split}
\mathcal{L}f(\mathbf{x}):= {}&\Big\langle Df(\mathbf{x}),\mathbf{b}(\mathbf{x})+\int_\mathcal{S}\mathbf{g}(\mathbf{x,r})\,\nu(d\mathbf{r})\Big\rangle + \frac{1}{2} tr(\pmb{\sigma}(\mathbf{x})\pmb{\sigma}(\mathbf{x})^TD^2f(\mathbf{x}))\\&+\int_\mathcal{S}[f(\mathbf{x}+\mathbf{g}(\mathbf{x,r}))-f(\mathbf{x})-\langle Df(\mathbf{x}),\mathbf{g}(\mathbf{x,r})\rangle]\,\nu(d\mathbf{r}),
\end{split}
\end{equation*}
where $\mathbf{r}=(\tau,a,u)\in\mathcal{S}$ and $\nu=\mathfrak{A}^\i(d\tau,da)\otimes du$. We wish to show that \eqref{eq:drastic-diffusion} has a unique solution. Following the steps of the proof of Theorem 2.8 in \cite{XZ_2019}, we define the following measure on the Borel sets $\mathcal{B}([0,1]^\demes)$. For any $\mathbf{x} \in [0,1]^\demes$, $M(\mathbf{x},B):=\nu\{\mathbf{r}\in \mathcal{S}\,:\, \mathbf{g}(\mathbf{x,r})\in B\}$. The generator $\mathcal{L}$ can then be rewritten as:
\begin{equation*}
\begin{split}
\mathcal{L}f(\mathbf{x}):={}& \Big\langle Df(\mathbf{x}),\mathbf{b}(\mathbf{x})+\int_{[0,1]^\demes}\mathbf{y}\,M(\mathbf{x},d\mathbf{y})\Big\rangle + \frac{1}{2} tr(\pmb{\sigma}(\mathbf{x})\pmb{\sigma}(\mathbf{x})^TD^2f(\mathbf{x}))\\&+\int_{[0,1]^\demes}[f(\mathbf{x+y})-f(\mathbf{x})-\langle Df(\mathbf{x}),\mathbf{y}\rangle] M(\mathbf{x},d\mathbf{y}).
\end{split}
\end{equation*}
We can use Theorem 2.2 in \cite{Stroock_1975} to show that the martingale problem associated to $\mathcal{L}$ has a solution, since all the assumptions are satisfied by our generator. 
Since we are working in a compact domain, boundedness assumptions are clearly satisfied.
It remains to check that 
\begin{equation*}
\int_{[0,1]^\demes}\frac{|\mathbf{y}|^2}{1+|\mathbf{y}|^2}\varphi(\mathbf{y})M(\mathbf{x},d\mathbf{y})	
\end{equation*}                                 
is continuous for all $\varphi \in C_b(\R^\demes)$. Now we observe that
\begin{equation*}
\int_{[0,1]^\demes}\frac{|\mathbf{y}|^2}{1+|\mathbf{y}|^2}\varphi(\mathbf{y})M(\mathbf{x},d\mathbf{y})	= \int_\mathcal{S} \frac{|\mathbf{g}(\mathbf{x,r})|^2}{1+|\mathbf{g}(\mathbf{x,r})|^2}\varphi(\mathbf{g}(\mathbf{x,r}))\nu(d\mathbf{r}),
\end{equation*}
which is indeed continuous in $\mathbf{x}$.
Now, by Theorem 2.3 in \cite{Kurtz_2011}, we know that having a solution of the martingale problem associated with $\mathcal{L}$ is equivalent to having a weak solution of (\ref{eq:drastic-diffusion}).
The existence of a weak solution and the pathwise uniqueness proved in Proposition \ref{prop:pathwise uniqueness} imply the existence of a strong solution (see Theorem 1.2 in \cite{BLP_2015}).
\end{proof}

\begin{proposition}\label{prop:Feller}
Let $L$ be a probability measure on $\N$, and $F:\N \rightarrow \N$ a function such that $\max_i{c_i}\leq F(n)$ for any $n\in \N$, where $c_i:=\sum_{j\neq i}c_{ij}$. The unique strong solution $(\mathbf{X}(t))_{t\geq 0}$ of \eqref{eq:drastic-diffusion} is Feller continuous.	
\end{proposition}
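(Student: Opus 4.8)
The plan is to deduce Feller continuity from continuous dependence of the law of $\mathbf{X}(t)$ on its starting point, and then invoke bounded convergence. Writing the semigroup as $P_tf(\mathbf{x})=\E_{\mathbf{x}}[f(\mathbf{X}(t))]$, I want to show that $\mathbf{x}\mapsto P_tf(\mathbf{x})$ is continuous for every bounded continuous $f$; since the state space $[0,1]^\demes$ is compact, $C_b=C_0=C([0,1]^\demes)$, so this is precisely the statement that $P_t$ maps $C([0,1]^\demes)$ into itself.

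First I would couple two solutions. Let $\mathbf{X}^{\mathbf{x}}$ and $\mathbf{X}^{\mathbf{z}}$ be the strong solutions of \eqref{eq:drastic-diffusion-simply} driven by the \emph{same} Brownian motion $\mathbf{W}$ and the \emph{same} Poisson random measures $\mathbf{N}$, started from $\mathbf{x}$ and $\mathbf{z}$ respectively; these exist and are unique by Proposition \ref{prop:strong-sol}. I would then repeat the Yamada--Watanabe computation from the proof of Proposition \ref{prop:pathwise uniqueness} verbatim, applying It\=o's formula to $\varphi_k\big(X_i^{\mathbf{x}}(t)-X_i^{\mathbf{z}}(t)\big)$. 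The only change is that the initial difference no longer vanishes, so the identity now carries a boundary term:
\begin{equation*}
\E\big[\varphi_k(X_i^{\mathbf{x}}(t)-X_i^{\mathbf{z}}(t))\big]=\varphi_k(x_i-z_i)+(\text{drift, diffusion and jump integrals}).
\end{equation*}
The drift integral is controlled by (ii), the diffusion integral vanishes as $k\to\infty$ by (i) exactly as before, and the jump integral is controlled by (iii), i.e.\ by Lemma \ref{lemma:cLip}. Letting $k\to\infty$ so that $\varphi_k(r)\uparrow\abs{r}$ and summing over $i$, monotone convergence yields, for some constants $C,K$,
\begin{equation*}
\E\big[\norm{\mathbf{X}^{\mathbf{x}}(t)-\mathbf{X}^{\mathbf{z}}(t)}\big]\leq C\norm{\mathbf{x}-\mathbf{z}}+K\int_0^t\E\big[\norm{\mathbf{X}^{\mathbf{x}}(s)-\mathbf{X}^{\mathbf{z}}(s)}\big]\,ds,
\end{equation*}
and Gr\"onwall's inequality gives the Lipschitz-in-initial-condition bound $\E[\norm{\mathbf{X}^{\mathbf{x}}(t)-\mathbf{X}^{\mathbf{z}}(t)}]\leq C\,e^{Kt}\norm{\mathbf{x}-\mathbf{z}}$ for every $t\geq 0$.

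From here spatial continuity is immediate: as $\mathbf{z}\to\mathbf{x}$ the bound forces $\mathbf{X}^{\mathbf{z}}(t)\to\mathbf{X}^{\mathbf{x}}(t)$ in $L^1$, hence in probability, so for bounded continuous $f$ the variables $f(\mathbf{X}^{\mathbf{z}}(t))$ converge in probability to $f(\mathbf{X}^{\mathbf{x}}(t))$ and are uniformly bounded; bounded convergence then gives $P_tf(\mathbf{z})\to P_tf(\mathbf{x})$, so $P_t$ preserves $C([0,1]^\demes)$. For the strong continuity in time required of a Feller semigroup, I would use that the solution has c\`adl\`ag paths with $\mathbf{X}^{\mathbf{x}}(t)\to\mathbf{x}$ almost surely as $t\downarrow 0$, so that $P_tf(\mathbf{x})\to f(\mathbf{x})$ pointwise and boundedly; combined with the uniform (in $\mathbf{x}$, on the compact state space) modulus of continuity just obtained, this upgrades to $\norm{P_tf-f}_\infty\to 0$.

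I do not expect a genuine obstacle here, precisely because the hard analytic input, the integrated Lipschitz estimate for the jump coefficient, is already established as Lemma \ref{lemma:cLip}. The only point requiring minor care is the bookkeeping of the non-vanishing boundary term $\varphi_k(x_i-z_i)$ through the $\varphi_k$ approximation and the passage to the Euclidean norm when summing over coordinates; neither of these alters the structure of the Gr\"onwall argument.
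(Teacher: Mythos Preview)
Your proposal is correct and follows essentially the same route as the paper: couple two strong solutions with different initial conditions using the same driving noises, re-run the Yamada--Watanabe estimate from Proposition~\ref{prop:pathwise uniqueness} with the non-zero boundary term, apply Gr\"onwall to obtain $\E[\norm{\mathbf{X}^{\mathbf{x}}(t)-\mathbf{X}^{\mathbf{z}}(t)}]\leq \norm{\mathbf{x}-\mathbf{z}}e^{Kt}$, and conclude via bounded convergence. The paper's proof is terser---it simply cites the inequality~\eqref{eq:Gronwall} already derived in the pathwise-uniqueness argument rather than rewriting the $\varphi_k$ computation---and it only establishes spatial continuity of $P_t$, whereas you also sketch strong continuity in $t$; but the substance is identical.
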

\begin{proof}
Let $\mathbf{X}$ and $\mathbf{X}'$ be two solutions of \eqref{eq:drastic-diffusion-simply} on the same probability space started in $\mathbf{x}$ and $\mathbf{x}'$ respectively. 
We have to prove that, for any continuous bounded function $f$ on $[0,1]$,
\begin{equation}\label{eq:feller}
\E[f(\mathbf{X}'(t))] \rightarrow \E[f(\mathbf{X}(t))]	 \quad \text{as } \norm{\mathbf{x}-\mathbf{x}'}\to 0, \,  \text{for all } t\geq 0.
\end{equation}
We may assume that the two solutions are driven by the same Brownian motion and Poisson process, i.e.\ that they satisfy 
\begin{equation*}
\mathbf{X}(t)=\mathbf{x}+\int_{0}^t \pmb{\sigma}(\mathbf{X}(s)) d\mathbf{W}_s+\int_0^t\mathbf{b}(\mathbf{X}(s))	ds+\int_0^t\int_\mathcal{S} \mathbf{g}(\mathbf{X}(s),\tau,a,u)\,\mathbf{N}(ds,d\tau,da,du),
\end{equation*}
\begin{equation*}
\mathbf{X}'(t)=\mathbf{x}'+\int_{0}^t \pmb{\sigma}(\mathbf{X}'(s)) d\mathbf{W}_s+\int_0^t\mathbf{b}(\mathbf{X}'(s))	ds+\int_0^t\int_\mathcal{S} \mathbf{g}(\mathbf{X}'(s),\tau,a,u)\,\mathbf{N}(ds,d\tau,da,du).
\end{equation*}
The estimate \eqref{eq:Gronwall} from the proof of Proposition \ref{prop:pathwise uniqueness} and Gronwall's inequality in this case imply 
\begin{align*}
    \E[\norm{\mathbf{X}(t)-\mathbf{X}'(t)}]\leq \norm{\mathbf{x}-\mathbf{x}'}\exp(Kt)
\end{align*}
and thus 
\begin{equation*}
\lim_{\norm{\mathbf{x}-\mathbf{x}'}\to 0}\E\left[\norm{\mathbf{X}(t)-\mathbf{X}'(t)}\right]=0 
\end{equation*}
for all $t \geq 0$. This in turn implies convergence in probability and we obtain
Equation \eqref{eq:feller} from the bounded convergence theorem for convergence in probability.
\end{proof}

\section{The structured Wright--Fisher model and Kingman coalescent during a soft bottleneck}\label{appendixB} 
In this section, we consider the setting of Section \ref{section:soft} and investigate the behaviour of the frequency and ancestral processes during one soft bottleneck length $l^N$ affecting deme $i$. In particular, consider a population divided in $\demes$ demes evolving in discrete generations $g\in\{0,1,\ldots ,l^N\}$ with $l^N \to \infty$ and $l^N/N\to 0$ as $N \to \infty$.  The size of every deme $j\neq i$ is constant and equal to $N_j=\omega_jN$, while size of the $i$-th subpopulation is equal to $\presize(g)$, where $\presize:[0,l^N]\to \N$ is such that 
\begin{equation}\label{eq:condition-presize}
\lim_{N\to \infty} \frac{\presize(\lfloor l^Nt\rfloor)}{l^N}=\varphi(t) \hspace{2em} \forall \,t\in [0,1],
\end{equation}
for some continuous function $\varphi:[0,1]\rightarrow (0,\infty)$. The evolution follows the mechanism described in Section \ref{sec:model}: reproduction follows a Wright--Fisher model within each deme, and at each time step a fixes number $c_{ij}$ of individuals migrates from deme $i$ to deme $j$ with
\begin{equation*}
c_i:=\sum_{j\neq j}c_{ij}=\sum_{i\neq j}c_{ji}.
\end{equation*} 
Each individual carries one of two possible alleles $\{a,A\}$ and passes its type to the offspring. We consider the $\N_0^\demes$-valued process $\{\pmb{\mathcal{X}}^{\i,N}(g)\}_{g\in\{0,\ldots ,l^N\}}$, where
\begin{equation*}
\begin{split}
&\mathcal{X}^{\i,N}_j(g)=\text{number of type } a\text{ individuals in deme $j$ at time }g.
\end{split}
\end{equation*}
Since the size of one deme in not constant, $\{\pmb{\mathcal{X}}^{\i,N}(g)\}_{g\in\{0,\ldots ,l^N\}}$ is a time-inhomogeneous Markov chain with 
\begin{equation*}
\mathcal{X}^{\i,N}_{j}(g)=V^{\i,N}_{jj}(g)+
	 \sum_{l \neq j} V^{\i,N}_{lj}(g),
\end{equation*}
where $V^{\i,N}_{jj}(g)$ is the number of type-$a$ individuals in deme $j$ at time $g$ whose ancestor was in deme $j$ at time $g-1$, while $V^{\i,N}_{lj}(g)$ is the number of type-$a$ individuals in deme $j$ at time $g$ whose ancestor was in deme $l$ at time $g-1$. The conditional distributions of these variables are
\begin{equation*}
\begin{split}
&\mathcal{L}(V^{\i,N}_{jj}(g+1)|\pmb{\mathcal{X}}^{\i,N}(g))=\text{Bin}\left(N_j-c_j,\frac{\mathcal{X}^{\i,N}_j(g)}{N_j}\right),\\
&\mathcal{L}(V^{\i,N}_{lj}(g+1)|\pmb{\mathcal{X}}^{\i,N}(g))=\text{Bin}\left(c_{lj},\frac{\mathcal{X}^{\i,N}_l(g)}{N_l}\right),\\
&\mathcal{L}(V^{\i,N}_{ij}(g+1)|\pmb{\mathcal{X}}^{\i,N}(g))=\text{Bin}\left(c_{ij},\frac{\mathcal{X}^{\i,N}_i(g)}{\presize(g)}\right),\\
&\mathcal{L}(V^{\i,N}_{ii}(g+1)|\pmb{\mathcal{X}}^{\i,N}(g))=\text{Bin}\left(\presize(g+1)-c_i,\frac{\mathcal{X}^{\i,N}_{i}(g)}{\presize(g)}\right),\\
&\mathcal{L}(V^{\i,N}_{li}(g+1)|\pmb{\mathcal{X}}^{\i,N}(g))=\text{Bin}\left(c_{lj},\frac{\mathcal{X}^{\i,N}_l(g)}{N_l}\right).
\end{split}
\end{equation*}
The frequency of type-$a$ individuals in the demes is given by the $[0,1]^\demes$-valued Markov chain $\{\tilde{\mathbf{X}}^{\i,N}(g)\}_{g\in\{0,\ldots ,l^N\}}$ with 
\begin{equation}\label{eq:appendix-prelimiting-frequency}
\tilde{X}^{\i,N}_i(g)=\frac{\mathcal{X}^{\i,N}_i(g)}{\presize(g)} \,\, \text{ and } \,\, \tilde{X}^{\i,N}_j(g)=\frac{\mathcal{X}^{\i,N}_j(g)}{N_j} \,\, \text{ for }\,\, j\neq i.
\end{equation}
We now prove a convergence result for this allele-frequency process. Since the sizes of all but one deme are of order $N\gg l^N$, in the timescale $l^N$, we will eventually observe fluctuations only in the $i$-th subpopulation, while, in all the others, the allele-frequency remains constant.
\begin{proposition}\label{prop:appendix-frequency-convergence}
Fix a sequence $l^N$ with $l^N\rightarrow \infty$ and $l^N/N \rightarrow 0$ as $N\rightarrow \infty $, and a sequence of functions $\presize$ satisfying \eqref{eq:condition-presize}. Let $\{\tilde{\mathbf{X}}^{\i,N}(g)\}_{g\in\{0,\ldots ,l^N\}}$ be the frequency process as defined in \eqref{eq:appendix-prelimiting-frequency}, with $\tilde{\mathbf{X}}^{\i,N}(0)=\mathbf{x}\in [0,1]^\demes$. Then $\{\tilde{\mathbf{X}}^{\i,N}(\lfloor l^Nt\rfloor)\}_{t\in[0,1]}	$ converges weakly in distribution, as $N \to \infty$, to $(\tilde{\mathbf{X}}^\i(\size(t)))_{t\in [0,1]}$, where
\begin{equation*}
\size(t)=\int_0^t\frac{1}{\varphi(s)}\,ds,
\end{equation*}
and $(\tilde{\mathbf{X}}^\i(t))_{t\in [0,1]}$ is the unique strong solution of 
\begin{equation*}
\begin{split}
d\tilde{X}_i^\i(t)=&\sum_{j\neq i}c_{ji}(\tilde{X}^\i_j(t)-\tilde{X}_i^\i(t))dt+\sqrt{\tilde{X}_i^\i(t)(1-\tilde{X}_i^\i(t))}dB_i(t),\\
d\tilde{X}_j^\i(t)=&\,0 \,\, \text{ for } j\neq i,
\end{split}
\end{equation*}	
where $(B_i(t))_{t \in [0,1]}$ is a Brownian  motion and $\tilde{\mathbf{X}}^\i(0)=\mathbf{x}$.	
\end{proposition}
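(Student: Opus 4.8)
The plan is to show that the time-rescaled chain $\mathbf{Y}^N(t):=\tilde{\mathbf{X}}^{\i,N}(\lfloor l^N t\rfloor)$ converges weakly to $\mathbf{Y}(t):=\tilde{\mathbf{X}}^\i(\size(t))$ by establishing convergence of the associated (time-dependent) generators and then invoking a martingale-problem characterisation of the limit. Since $\{\tilde{\mathbf{X}}^{\i,N}(g)\}_g$ is a \emph{time-inhomogeneous} Markov chain, the inhomogeneity entering only through the varying size $\presize(g)$ of deme $i$, I would first record that the candidate limit is well posed: the coordinates $j\neq i$ are frozen and the $i$-th coordinate solves a one-dimensional Wright--Fisher SDE with Lipschitz immigration drift and diffusion coefficient $\sqrt{x_i(1-x_i)}$, for which pathwise uniqueness follows exactly as in Proposition \ref{prop:pathwise uniqueness} via Yamada--Watanabe. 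Writing $\tilde{\mathcal{A}}^\i f(\mathbf{x})=\sum_{j\neq i}c_{ji}(x_j-x_i)\partial_{x_i}f(\mathbf{x})+\tfrac12 x_i(1-x_i)\partial^2_{x_i}f(\mathbf{x})$ for the generator of $\tilde{\mathbf{X}}^\i$, the deterministic time change $\size$ makes $\mathbf{Y}$ a solution of the time-inhomogeneous martingale problem for $\mathcal{A}_t:=\varphi(t)^{-1}\tilde{\mathcal{A}}^\i$, since $\size'(t)=1/\varphi(t)$.

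Next I would compute the one-step conditional moments of the increments from the conditional binomial laws displayed above \eqref{eq:appendix-prelimiting-frequency}. For $j\neq i$, the relevant demes have size $N_j=\omega_j N$, so both the conditional mean and variance of the increment of $\tilde{X}_j^{\i,N}$ are $O(1/N)$; after multiplication by $l^N$ and use of $l^N/N\to0$ their contribution to the rescaled generator vanishes, which is precisely what freezes those coordinates in the limit. For the $i$-th coordinate, using $c_i=\sum_{l\neq i}c_{li}$ one obtains
\begin{equation*}
\E\big[\tilde{X}_i^{\i,N}(g+1)-x_i\,\big|\,\tilde{\mathbf{X}}^{\i,N}(g)=\mathbf{x}\big]=\frac{1}{\presize(g+1)}\sum_{l\neq i}c_{li}(x_l-x_i),
\end{equation*}
while the conditional variance equals $\presize(g+1)^{-1}x_i(1-x_i)$ up to lower-order corrections (the migrant contributions being $O(\presize(g+1)^{-2})$). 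A Taylor expansion of $f\in C^3([0,1]^\demes)$ to second order, with the third-order remainder controlled by the third absolute moments of the increments (of order $\presize(g+1)^{-3/2}$), then yields, for the rescaled generator $\mathcal{A}^N_t f(\mathbf{x}):=l^N\,\E_{\mathbf{x}}[f(\tilde{\mathbf{X}}^{\i,N}(\lfloor l^Nt\rfloor+1))-f(\mathbf{x})]$,
\begin{equation*}
\mathcal{A}^N_t f(\mathbf{x})=\frac{l^N}{\presize(\lfloor l^Nt\rfloor+1)}\,\tilde{\mathcal{A}}^\i f(\mathbf{x})+o(1),
\end{equation*}
uniformly in $\mathbf{x}\in[0,1]^\demes$. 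By Assumption \ref{senumerate:3} (the unit shift in the index being asymptotically negligible) the prefactor converges to $1/\varphi(t)=\size'(t)$, so $\mathcal{A}^N_t f\to \mathcal{A}_t f$ pointwise in $t$.

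Finally I would promote this generator convergence to weak convergence of $\mathbf{Y}^N$. The clean device is to pass to the space-time process $(t,\mathbf{Y}^N(t))$, which is time-homogeneous with limiting generator $\partial_t+\mathcal{A}_t$; since the limit is Feller and its martingale problem is well posed by the uniqueness above, weak convergence follows as in \cite[Theorems 19.25 and 19.28]{Kallenberg}, continuity of $\varphi$ (hence of $t\mapsto\mathcal{A}_t$) ensuring Feller continuity. The main obstacle is exactly the time-inhomogeneity: one must control the prefactor $l^N/\presize(\lfloor l^Nt\rfloor+1)$ well enough to pass the limit through the integrated compensator $\int_0^t \mathcal{A}^N_s f(\mathbf{Y}^N(s))\,ds$. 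Assumption \ref{senumerate:3} provides only pointwise-in-$t$ convergence, so I would combine it with the lower bound $\inf_{[0,1]}\varphi>0$ (valid since $\varphi$ is continuous and positive on a compact) and dominated convergence for the Riemann sums approximating $\int_0^t \size'(s)\,ds$, together with a tightness estimate obtained from the uniformly bounded (after rescaling) first two conditional moments. Verifying this domination, equivalently the locally uniform control of the intrinsic clock $g\mapsto \sum_{h\le g}\presize(h)^{-1}$ whose rescaled limit is exactly $\size(t)$, is the technical heart of the argument; everything else is a routine diffusion approximation.
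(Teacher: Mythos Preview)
Your proposal is correct and follows essentially the same route as the paper: pass to the space-time (homogeneous) extension, compute the one-step conditional moments to obtain generator convergence via Taylor expansion, identify the prefactor $l^N/\presize(\lfloor l^Nt\rfloor)$ with $1/\varphi(t)$ in the limit, and conclude weak convergence from Kallenberg's Theorems 19.25 and 19.28 using the Feller property of the limit. Your version is in fact more explicit than the paper's about the moment computations (in particular the freezing of the $j\neq i$ coordinates via $l^N/N\to0$) and about the uniformity issue in the time variable, which the paper simply asserts.
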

\begin{proof}
As we have a time-inhomogeneous process, we consider the homogeneous extension
\begin{equation*}
\{\tilde{\mathbf{X}}^{\i,N}(\lfloor l^Nt\rfloor),\lfloor l^Nt\rfloor\}_{t\geq 0},
\end{equation*}
whose generator is
\begin{equation*}
\begin{split}
\tilde{\mathcal{A}}^{\i,N}f(\mathbf{x},t)={}&l^N\E_{\mathbf{x}}\left[f\left(\tilde{\mathbf{X}}^{\i,N}(\lfloor l^Nt\rfloor+1),\frac{\lfloor l^Nt\rfloor+1}{l^N}\right)-f\left(\mathbf{x},\frac{\lfloor l^Nt\rfloor+1}{l^N}\right)\right]\\&+l^N\left[f\left(\mathbf{x},\frac{\lfloor l^Nt\rfloor+1}{N}\right)-f\left(\mathbf{x},\frac{\lfloor l^Nt\rfloor}{l^N}\right)\right],
\end{split}
\end{equation*}
for any $f\in C^3([0,1]^\demes)$. By Taylor expansion, we can prove that the first term of $\tilde{\mathcal{A}}^{\i,N}f(\mathbf{x},t)$ converges, as $N \to \infty$, to 
\begin{equation*}
\frac{\partial f}{\partial x_i}(\mathbf{x},t)\frac{\sum_{j\neq i}c_{ji}(x_j-x_i)}{\varphi(t)}+\frac{\partial ^2f}{\partial x_i^2}(\mathbf{x},t)\frac{x_i(1-x_i)}{2\varphi(t)},
\end{equation*}
while for the second term we have
\begin{equation*}
l^N\left[f\left(\mathbf{x},\frac{\lfloor l^Nt\rfloor+1}{l^N}\right)-f\left(\mathbf{x},\frac{\lfloor l^Nt\rfloor}{l^N}\right)\right]=\frac{ f\left(\mathbf{x},\lfloor t\rfloor+1/l^N\right)-f\left(\mathbf{x},\lfloor t\rfloor\right)}{1/l^N}\xrightarrow{N\rightarrow \infty}\frac{\partial f}{\partial t}(\mathbf{x},t).
\end{equation*}
Hence we get
\begin{equation}\label{eq:appendix-limiting-generator}
\lim_{N\to \infty}\tilde{\mathcal{A}}^{\i,N}f(\mathbf{x},t)=\frac{\partial f}{\partial x_i}(\mathbf{x},t)\frac{\sum_{j\neq i}\omega_j\nu_{ji}(x_j-x_i)}{\varphi(t)}+\frac{\partial ^2f}{\partial x_i^2}(\mathbf{x},t)\frac{x_i(1-x_i)}{2\varphi(t)}+\frac{\partial f}{\partial t}(\mathbf{x},t).
\end{equation}
The limit in \eqref{eq:appendix-limiting-generator} is indeed the generator of the process $(\tilde{\mathbf{X}}^\i(\size(t)))_{t\in [0,1]}$. As our limit is a Feller processes, we conclude that the convergence of generators implies the weak convergence \cite[Theorem 19.25 and 19.28]{Kallenberg}.
\end{proof}
We now consider the same evolution backward in time, i.e.\ time $g$ now corresponds to generation $l^N-g$, and build the discrete \emph{ancestral line process} of this model taking values in the space of \emph{marked} partitions 
\begin{equation*}
	\mathcal{P}_n^{\demes}:=\{\xi=(\pi,\vec{u})\,|\, \pi\in \mathcal{P}_n, \vec{u}\in\{1,\ldots,\demes\}^{|\xi|}\}.
\end{equation*}
\begin{definition}\label{def:pi-tilda}
We start with a sample of size $n$ at time $0$, i.e\ our initial condition is of the form $\xi=(\pi,\vec{u})$ with $\pi=\{\{1\},\{2\},\ldots ,\{n\}\}$, and then we trace their ancestries through the equivalence relation 
\begin{equation*}
i\sim_g	j \quad \quad \text{if individuals $i$ and $j$ find a common ancestor before time $g$,}
\end{equation*}
the mark of each block corresponds to the deme those ancestors are in at that time. We call the process constructed as such \emph{ancestral lime process of the structure Wright--Fisher model with $(i)$-varying population size} and denote it by  $\{\tilde{\Pi}^{\i,N}(g)\}_{g \in \{0,\ldots ,l^N\}}$.
\end{definition}
As for the frequency process, the intuition is that, in the limit, we only see changes in the small $i$-th subpopulation. Indeed, the limiting ancestral process will show only binary mergers in deme $i$ and single migrations from deme $i$ to the others.
\begin{proposition}\label{prop:appendix-coalescent-convergence}
Fix a sequence $l^N$ with $l^N\rightarrow \infty$ and $l^N/N \rightarrow 0$ as $N\rightarrow \infty $, and a sequence of functions $\presize$ satisfying \eqref{eq:condition-presize}. Let $\{\tilde{\Pi}^{\i,N}(g)\}_{g \in \{0,\ldots,l^N\}}$ be the process defined in \ref{def:pi-tilda}, then we have 
\begin{equation*}
\{\tilde{\Pi}^{\i,N}(\lfloor Nt\rfloor)\}_{t\in [0,1]} \xRightarrow[N\rightarrow \infty]{} \{\tilde{\Pi}^{\i}(F(t))\}_{t\in [0,1]}\,,
\end{equation*}
where $(\tilde{\Pi}^{\i}(t))_{t\in[0,1]}$ is a Markov process on $\mathcal{P}_k^{\demes}$ with rate matrix $\tilde{Q}^{\text{\tiny{($i$)}}}=\{\tilde{q}_{\xi\eta}^{\text{\tiny{($i$)}}}\}$ such that

\begin{equation*}\arraycolsep=3pt\def\arraystretch{1.9} \tilde{q}_{\xi\eta}^{\text{\tiny{($i$)}}} =   \left\{ \begin{array}{lll}
         1 & \text{if} \hspace{0.2cm}&  \xi \succi \eta, \\
        c_{ji}& \text{if} \hspace{0.2cm}& \xi \simi \eta,
            \end{array} \right. 
        \end{equation*}
and 
\begin{equation*}
\size(t)=\int_0^t\frac{1}{\varphi(s)}\,ds.
\end{equation*}
\end{proposition}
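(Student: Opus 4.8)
The plan is to follow verbatim the strategy of Proposition~\ref{prop:appendix-frequency-convergence}, replacing the forward frequency chain by the backward ancestral line process $\{\prepii(g)\}$ and invoking the same convergence-of-generators criterion. Since $n$ is fixed, the marked-partition space $\mathcal{P}_n^{\demes}$ is finite, so every process in sight is trivially Feller and \cite[Theorem 19.25 and 19.28]{Kallenberg} applies: it is enough to prove uniform convergence of generators on the bottleneck time scale $l^N$. The first step is to deal with time-inhomogeneity: because the deme-$i$ size $\presize(g)$ varies along the bottleneck, $\{\prepii(g)\}$ is only time-inhomogeneously Markov, so I pass to the space-time homogeneous extension $\{(\prepii(\lfloor l^N t\rfloor),\lfloor l^N t\rfloor/l^N)\}_{t\geq 0}$ on $\mathcal{P}_n^{\demes}\times[0,1]$ and study its generator rescaled by $l^N$.

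Next I compute the one-step transition probabilities of the ancestral process looking one generation further back through the Wright--Fisher-with-migration mechanism with deme-$i$ parent population of size $\presize$. If the current partition carries some blocks marked $i$, then each pair of $i$-blocks coalesces with probability $\presize^{-1}+O(\presize^{-2})$, and each $i$-block is a backward migrant to deme $j$ with probability $c_{ji}\presize^{-1}+O(\presize^{-2})$ (exactly $c_{ji}$ of the $\presize$ deme-$i$ parents are forward migrants from $j$). Blocks marked $j\neq i$ coalesce or migrate only with probability $O(N^{-1})$, and every pair of simultaneous events (a double merger, a merger together with a migration, or two migrations) has probability $O(\presize^{-2})$. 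Multiplying by $l^N$ and using $\presize(\lfloor l^N t\rfloor)\sim l^N\varphi(t)$ from \eqref{eq:condition-presize} together with $l^N/N\to 0$, the coalescence and migration contributions converge to $1/\varphi(t)$ per pair and $c_{ji}/\varphi(t)$ per $i$-block, while all remaining terms vanish.

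It then follows that the rescaled generator converges uniformly to
\begin{equation*}
\tilde{G}^{\i}f(\xi,t)=\frac{1}{\varphi(t)}\sum_{\eta\in\mathcal{P}_n^{\demes}}\Big[\1_{\{\xi\succi\eta\}}+\sum_{j\neq i}c_{ji}\,\1_{\{\xi\simi\eta\}}\Big]\big(f(\eta,t)-f(\xi,t)\big)+\frac{\partial f}{\partial t}(\xi,t).
\end{equation*}
This is precisely the generator of the space-time process $(\pii(\size(t)),t)$: a deterministic time change by $\size(t)=\int_0^t\varphi(s)^{-1}\,ds$ of the structured Kingman coalescent with rate matrix $\tilde{Q}^{\text{\tiny{($i$)}}}$ of Definition~\ref{def:bottleneck-coalescents} inserts the factor $\size'(t)=1/\varphi(t)$ in front of $\tilde{Q}^{\text{\tiny{($i$)}}}$, and the embedding of the clock contributes the $\partial_t$ term. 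The cited convergence theorem then yields the claimed weak convergence to $\{\pii(\size(t))\}_{t\in[0,1]}$.

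The main obstacle is the delicate-but-routine combinatorial bookkeeping in the second step: one must verify carefully that all simultaneous events and all events inside the large demes $j\neq i$ are of order $O(\presize^{-2})$ or $O(l^N/N)$ and hence disappear after the $l^N$-rescaling, so that only single binary mergers in deme $i$ and single migrations out of deme $i$ survive, matching $\tilde{Q}^{\text{\tiny{($i$)}}}$. A secondary point needing attention is the forward/backward time indexing---backward generation $g$ corresponds to forward generation $l^N-g$, as fixed in Definition~\ref{def:pi-tilda}---so that one must confirm that the size controlling the rate at rescaled backward time $t$ has the asymptotics entering $\varphi$, with any reparametrisation absorbed into the time change $\size$.
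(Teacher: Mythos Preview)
Your proposal is correct and follows exactly the route the paper takes: the paper's own proof is the single line ``The proof follows as in Proposition~\ref{prop:appendix-frequency-convergence} by convergence of generators,'' and you have spelled out precisely that argument, passing to the space--time homogeneous extension, computing the rescaled one-step rates, and identifying the limiting generator as that of $(\pii(\size(t)),t)$. Your additional care about the $O(\presize^{-2})$ and $O(l^N/N)$ error terms and the forward/backward indexing is more detail than the paper provides, but is entirely in line with its intended argument.
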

\begin{proof}
\noindent The proof follows as in Proposition \ref{prop:appendix-frequency-convergence} by convergence of generators.
\end{proof}

\bibliographystyle{alpha}
\bibliography{BibBottleneck.bib}

\end{document}